\documentclass[12pt]{amsart}
\pdfoutput=1

\usepackage{
    amsmath,
    amsfonts,
    amssymb,
    amsthm,
    amscd,
    comment,
    enumitem,
    etoolbox,
    gensymb,    
    mathtools,
    mathdots,
    stmaryrd,
}
\usepackage{pdfpages}
\usepackage{appendix}
\usepackage{booktabs,tabularx}
\setcounter{MaxMatrixCols}{20}
\usepackage{makecell}

\usepackage{xcolor}
\usepackage[all]{xy}


\usepackage[T1]{fontenc}
\usepackage{bbm}                     
\usepackage[colorlinks=true, linkcolor=blue, citecolor=blue, urlcolor=blue, breaklinks=true]{hyperref}


\DeclareFontFamily{OT1}{pzc}{}
\DeclareFontShape{OT1}{pzc}{m}{it}{<-> s * [1.10] pzcmi7t}{}
\DeclareMathAlphabet{\mathpzc}{OT1}{pzc}{m}{it}


\leftmargin=0in
\topmargin=0pt
\headheight=0pt
\oddsidemargin=0in
\evensidemargin=0in
\textheight=8.75in
\textwidth=6.5in
\parindent=0.5cm
\headsep=0.25in
\widowpenalty10000
\clubpenalty10000


\usepackage[capitalize]{cleveref}   

\crefname{defin}{Definition}{Definitions}
\crefname{eg}{Example}{Examples}
\crefname{egs}{Example}{Examples}
\crefname{lem}{Lemma}{Lemmas}
\crefname{theo}{Theorem}{Theorems}
\crefname{equation}{}{}
\crefname{enumi}{}{}

\renewcommand{\Im}{\mathrm{Im}}

\newcommand{\quarter}{\frac{1}{4}}
\newcommand{\half}{\frac{1}{2}}

\newcommand\C{\mathbb{C}}
\newcommand\N{\mathbb{N}}
\newcommand\OO{\mathbb{O}}

\newcommand\R{\mathbb{R}}

\newcommand\kk{\Bbbk}
\newcommand\one{\mathbbm{1}}

\newcommand\B{\mathbf{B}}

\newcommand\fg{\mathfrak{g}}
\newcommand{\fsl}{\mathfrak{sl}_2(\kk)}

\newcommand\md{\textup{-mod}}


\newcommand\cC{\mathcal{C}}

\newcommand\Fcat{\mathpzc{EL}}           
\newcommand\Tcat{\mathpzc{T}}           
\newcommand\TL{\mathpzc{TL}}            

\newcommand\cI{\mathcal{I}}             

\newcommand\go{{\mathsf{I}}}            


\DeclareMathOperator{\Add}{Add}
\DeclareMathOperator{\End}{End}

\DeclareMathOperator{\Hom}{Hom}
\DeclareMathOperator{\id}{id}

\DeclareMathOperator{\Kar}{Kar}

\DeclareMathOperator{\Ob}{Ob}

\DeclareMathOperator{\Rot}{Rot}

\DeclareMathOperator{\Switch}{Switch}

\DeclareMathOperator{\tr}{tr}
\DeclareMathOperator{\ad}{ad}


\usepackage{tikz}
\usetikzlibrary{arrows.meta}
\usetikzlibrary{decorations.markings}
\usetikzlibrary{calc}


\tikzset{anchorbase/.style={>=To,baseline={([yshift=-0.5ex]current bounding box.center)}}}
\tikzset{ 
    centerzero/.style={>=To,baseline={([yshift=-0.5ex](#1))}},
    centerzero/.default={0,0}
}
\tikzset{wipe/.style={white,line width=3pt}}


\newcommand\braiddown{to[out=down,in=up]}



\newcommand\bub[1]{
    \draw (#1)++(0,0.2) arc(90:-270:0.2)
}


\newcommand\bubble{
    \begin{tikzpicture}[centerzero]
        \bub{0,0};
    \end{tikzpicture}
}
\newcommand\idstrand[1][a]{
    \begin{tikzpicture}[centerzero]
        \draw (0,-0.2) -- (0,0.2);
    \end{tikzpicture}
}
\newcommand\crossmor{
    \begin{tikzpicture}[centerzero]
        \draw (0.2,-0.2) -- (-0.2,0.2);
        \draw (-0.2,-0.2) -- (0.2,0.2);
    \end{tikzpicture}
}
\newcommand{\cupmor}{
    \begin{tikzpicture}[anchorbase]
        \draw (-0.15,0.15) -- (-0.15,0) arc(180:360:0.15) -- (0.15,0.15);
    \end{tikzpicture}
}
\newcommand{\capmor}{
    \begin{tikzpicture}[anchorbase]
        \draw (-0.15,-0.15) -- (-0.15,0) arc(180:0:0.15) -- (0.15,-0.15);
    \end{tikzpicture}
}
\newcommand\mergemor{
    \begin{tikzpicture}[centerzero]
      \draw (-0.2,-0.2) to (0,0);
      \draw (0.2,-0.2) to (0,0);
      \draw (0,0) to (0,0.2);
    \end{tikzpicture}
}
\newcommand\splitmor{
    \begin{tikzpicture}[centerzero]
      \draw (-0.2,0.2) to (0,0);
      \draw (0.2,0.2) to (0,0);
      \draw (0,0) to (0,-0.2);
    \end{tikzpicture}
}

\newcommand\lollydrop{
    \begin{tikzpicture}[centerzero]
        \draw (0,0.2) -- (0,0) to[out=-40,in=up] (0.15,-0.15) arc(0:-180:0.15) to[out=up,in=-130] (0,0);
    \end{tikzpicture}
}
\newcommand\dotcross{
\begin{tikzpicture}[centerzero]
	\draw (-.2,-.2) -- (.2,.2);
	\draw (.2,-.2) -- (-.2,.2);
	\draw[fill] (0,0) -- (-.08,.08) -- (-.08,-.08);
\end{tikzpicture}
}
\newcommand\triform{
    \begin{tikzpicture}[centerzero]
        \draw (-0.3,-0.2) -- (0,0.2);
        \draw (0,-0.2) -- (0,0.2);
        \draw (0.3,-0.2) -- (0,0.2);
    \end{tikzpicture}
}
\newcommand\explode{
    \begin{tikzpicture}[centerzero]
        \draw (-0.3,0.2) -- (0,-0.2);
        \draw (0,0.2) -- (0,-0.2);
        \draw (0.3,0.2) -- (0,-0.2);
    \end{tikzpicture}
}
\newcommand\trimor{
    \begin{tikzpicture}[anchorbase]
        \draw (0,0.15) -- (0.13,-0.075) -- (-0.13,-0.075) -- cycle;
        \draw (0,0.15) -- (0,0.3);
        \draw (0.13,-0.075) -- (0.24,-0.185);
        \draw (-0.13,-0.075) -- (-0.24,-0.185);
    \end{tikzpicture}
}


\newcommand\sqmor{
    \begin{tikzpicture}[centerzero]
        \draw (-0.15,-0.15) rectangle (0.15,0.15);
        \draw (-0.3,-0.3) -- (-0.15,-0.15);
        \draw (0.3,-0.3) -- (0.15,-0.15);
        \draw (-0.3,0.3) -- (-0.15,0.15);
        \draw (0.3,0.3) -- (0.15,0.15);
    \end{tikzpicture}
}
\newcommand\pentmor{
    \begin{tikzpicture}[centerzero]
        \draw (0,0.25) -- (-0.24,0.08) -- (-0.15,-0.20) -- (0.15,-0.20) -- (0.24,0.08) -- cycle;
        \draw (0,0.25) -- (0,0.4);
        \draw (-0.24,0.08) -- (-0.35,0.4);
        \draw (0.24,0.08) -- (0.35,0.4);
        \draw (-0.15,-0.2) -- (-0.35,-0.4);
        \draw (0.15,-0.2) -- (0.35,-0.4);
    \end{tikzpicture}
}
\newcommand\Hmor{
    \begin{tikzpicture}[centerzero]
        \draw (-0.3,-0.2) -- (-0.1,0) -- (-0.3,0.2);
        \draw (-0.1,0) -- (0.1,0);
        \draw (0.3,-0.2) -- (0.1,0) -- (0.3,0.2);
    \end{tikzpicture}
}
\newcommand\Imor{
    \begin{tikzpicture}[centerzero]
        \draw (-0.2,-0.3) -- (0,-0.1) -- (0.2,-0.3);
        \draw (-0.2,0.3) -- (0,0.1) -- (0.2,0.3);
        \draw (0,-0.1) -- (0,0.1);
    \end{tikzpicture}
}
\newcommand\hourglass{
    \begin{tikzpicture}[centerzero]
        \draw (-0.15,-0.3) -- (-0.15,-0.25) arc(180:0:0.15) -- (0.15,-0.3);
        \draw (-0.15,0.3) -- (-0.15,0.25) arc(180:360:0.15) -- (0.15,0.3);
    \end{tikzpicture}
}
\newcommand\jail{
    \begin{tikzpicture}[centerzero]
        \draw (-0.15,-0.3) -- (-0.15,0.3);
        \draw (0.15,-0.3) -- (0.15,0.3);
    \end{tikzpicture}
}


\newcommand{\symbox}[2]{
    \filldraw[fill=white,draw=black] (#1) rectangle (#2)
}


\newtheorem{theo}{Theorem}[section]

\newtheorem{prop}[theo]{Proposition}
\newtheorem{lem}[theo]{Lemma}
\newtheorem{cor}[theo]{Corollary}

\theoremstyle{definition}
\newtheorem{defin}[theo]{Definition}
\newtheorem{rem}[theo]{Remark}
\newtheorem{eg}[theo]{Example}

\numberwithin{equation}{section}
\allowdisplaybreaks

\setenumerate[1]{label=(\alph*)}          

\setcounter{tocdepth}{1}


\newtoggle{comments}
\newtoggle{details}
\newtoggle{detailsnote}


\iftoggle{comments}{%
  \newcommand{\ycomments}[1]{
    \ \\
    {\color{red}
      \textbf{YM:} #1
    }
    \ \\
    }
}{%
  \newcommand{\ycomments}[1]{}
 }

\iftoggle{details}{%
  \newcommand{\details}[1]{
      \ \\
      {\color{OliveGreen}
        \textbf{Details:} #1
      }
      \\
  }
}{%
  \newcommand{\details}[1]{}
}


\begin{document}

\title[Trivalent Categories for Adjoint Representations]{Trivalent Categories for Adjoint Representations of  Exceptional Lie Algebras}

\author{Youssef Mousaaid}
\address[Y.M.]{
  Department of Mathematics and Statistics \\
  University of Ottawa \\
  Ottawa, ON K1N 6N5, Canada
}
\email{ymous016@uottawa.ca}

\begin{abstract}
We consider the universal pivotal, symmetric, monoidal, $\kk$-linear category, generated by a Schurian object with a skew-symmetric multiplication, and  study some of its quotients. We show that these quotients give rise to either vector product algebras or representation categories of exceptional Lie algebras.
\end{abstract}

\subjclass[2020]{18M05, 18M30, 17B10, 17B25, 20G41}

\keywords{Monoidal category, string diagram, exceptional Lie algebra.}

\ifboolexpr{togl{comments} or togl{details}}{%
  {\color{magenta}DETAILS OR COMMENTS ON}
}{%
}

\maketitle
\thispagestyle{empty}

\tableofcontents 

\section{Introduction}

In this paper we consider (\cref{Tdef}) the universal  pivotal, symmetric, $\kk$-linear, category, $\Tcat_{\delta}$, generated by a Schurian object $\go$ that is equipped with a symmetric bilinear   form   and a skew-symmetric multiplication.
We aim at classifying quotients of $\Tcat_{\delta}$ with small hom-spaces. Imposing restrictions on the dimensions of hom-spaces forces some relations to hold. This allows us to indirectly prove relations in categories of representations, and to construct functors from quotients of $\Tcat_{\delta}$ into these categories. This technique is used in \cite{AR99,GSZ}, and also in \cite{MPS17}, where similar classification results have been obtained in the context of braided monoidal categories. 

To unwrap the definition let $\kk$ be a field of characteristic not equal to $2$. Given $\delta\in\kk$, we define $\Tcat_\delta$ to be the strict $\kk$-linear monoidal category generated by a single object $\go$ and four morphisms 
\[\mergemor \colon \go \otimes \go \to \go,\quad
	\crossmor \colon \go \otimes \go \to \go \otimes \go,\quad
	\cupmor \colon \one \to \go \otimes \go,\quad
	\capmor \colon \go \otimes \go \to \one,
\]
where $\one$ is the unit object. These  morphisms satisfy certain relations, so that we have the following universal  property. Let $\fg$ be a simple complex finite-dimensional Lie algebra  over a field $\kk$. Let $\kappa$ be the Killing form of $\fg$, and let $\B_\fg$ and $ \{b^\vee\colon b \in \B_\fg\} $ be dual bases of $\fg$. Then for $\delta=\dim_{\kk}\fg$, there is a unique monoidal functor
	\[
	\Phi \colon \Tcat_{\delta} \to \fg\md
	\]
	given on objects by $\go \mapsto \fg$ and on morphisms by
	\begin{align*}
		\Phi(\mergemor) &\colon \fg \otimes \fg \to \fg,&
		a \otimes b &\mapsto [a,b],
		\\
		\Phi(\crossmor) &\colon\fg \otimes \fg \to \fg \otimes \fg,&
		a \otimes b &\mapsto b \otimes a,
		\\
		\Phi(\cupmor) &\colon \kk \to \fg \otimes \fg, &
		1& \mapsto \sum_{b \in \B_\fg} b \otimes b^\vee,
		\\
		\Phi(\capmor) &\colon \fg \otimes \fg \to \kk,&
		a \otimes b &\mapsto   \kappa(a,b).
	\end{align*}
	Furthermore
\[
\Phi \left( \splitmor \right) \colon \fg \to \fg \otimes \fg,\qquad \qquad \qquad \qquad \quad 
		a \mapsto	\sum_{b \in \B_\fg} [a,b] \otimes b^\vee.
\]

 The category $\Tcat_{-2}$ extends the Temperley--Lieb category (\cref{delta}), and so we assume for the remainder of this introduction that  $\delta\neq -2$. We prove that there is no quotient category for which the dimension of $\End(\go^{\otimes  2})$ is either one or two (\cref{tamud}). We then show (\cref{dilum}) that if the endomorphism algebra of $\go^{\otimes  2}$, in some quotient of $\Tcat_{\delta,\alpha}$, is of dimension three, four or five, then $\go$ is either a  vector product algebra object or a Lie algebra object.  In the case when this dimension  is three, we construct (\cref{Kauff}) a functor from a certain quotient of the category $\Tcat_{\delta,\alpha}$ into  the Temperley-Lieb category. The vector product algebra case  arises when the dimension of $\End(\go^{\otimes  2})$ is four (\cref{babel}). The resulting quotient has been considered  in \cite{West}, and is used to give a diagrammatic proof of Hurwitz' theorem. A braided version of this category is found in \cite{Street19}. In the current paper, the case $\dim\End(\go^{\otimes  2})=4$ is related to the category of representations of the  exceptional Lie group $G_2$ (\cref{G2}).

 The situation becomes more involved when the dimension  of $\End(\go^{\otimes  2})$ is five, and we restrict ourselves to the case $\dim\Hom(\one,\go^{\otimes  5})=16$.  The reason we chose these specific numbers is because  of the equations
 \[
 \dim\left(\Tcat_{\delta}/\cI\right)\left(\go^{\otimes 2},\go^{\otimes 2}\right)=5 \quad \text{and}\quad 	\dim\left(\Tcat_{\delta}/\cI\right)\left(\go^{\otimes 2},\go^{\otimes 3}\right)=16,
 \]
  which hold for all five exceptional Lie algebras.  This case is related to Deligne's conjecture \cite{Del,AR99}  about the existence of a universal category for the exceptional Lie algebras. The precise relation is as follows. We define (\cref{Fdef}) a quotient $\Fcat_\delta$ of $\Tcat_{\delta,1}$. For $\fg$ an exceptional Lie algebra of type $G_2,F_4$ or $E_8$, and $\delta=\dim_{\kk}\fg$, we show that there is a full,  essentially surjective monoidal functor (\cref{full}) from $\Fcat_\delta$ onto the category of representations of $\fg$. A full description of the kernel of this functor is still unknown in the case of $F_4$ and $E_8$. The case $G_2$ has been solved by the work of Kuperberg \cite{Kup96}. As a consequence of the fullness of this functor \cref{full}, there are surjective homomorphisms
  \[
 \Fcat_\delta(\go^{\otimes n}) \twoheadrightarrow \End_\fg(V^{\otimes n}),
 \]
  for all $n\in \N$, with $V$ the adjoint representation of $\fg$. Furthermore, the results of \cref{sec4} allow us to give explicit bases of $\Hom_\fg(V^{\otimes 2}, V^{\otimes 3})$ and $\Hom_\fg(V^{\otimes 2},V^{\otimes 2})$. The pivotal structure in $\Tcat_{\delta,\alpha}$ induces a pivotal structure in any quotient $\cC$, and this gives isomorphisms
   \[\cC(\go^{\otimes m}, \go^{\otimes n} )\cong \cC(\one,\go^{\otimes(m+n)}).\]
  We summarize our findings in the following table.  
 
\begin{center}
	  \begin{tabular}{@{}lllr@{}} \toprule
  	\multicolumn{2}{c}{Dimensions} \\ \cmidrule(r){1-2}
  	$ \cC(\one , \go^{\otimes 4}) $ & $ \cC(\one , \go^{\otimes 5}) $ & Example & Reference  \\ \midrule
  	1 or 2  & any & nonexistent & Cor.~\ref{tamud} \\
  	 3 & any  & $\TL(-2)$ &  Prop.~\ref{Kauff} \\
  	4 & any  &  \makecell[l]{Vector product algebras\\ $G_2\md$} & Prop.~\ref{babel},  \ref{G2} \\
  	5 & 16 & $\fg_2,\mathfrak{f}_4,\mathfrak{e}_8\md$& Prop.~\ref{splat}, \ref{full}, \ref{baja} \\ \bottomrule
  \end{tabular}
\end{center}

We plan to investigate  the case where  $\dim \End_{\cC}(\go^{\otimes 2})=5$ and $\dim \Hom_{\cC}(\one,\go^{\otimes 5})=n$, for $n=1,\ldots ,15$, and extend our results to the quantum setting, in a subsequent work. Note that some computations in this paper are performed using SageMath \cite{sagemath}. The SageMath notebook is in the \hyperref[appendix]{appendix} and it can be downloaded from arXiv.
  
\subsection*{Acknowledgements}
The author would like to thank his supervisor A. Savage for providing many comments for the improvement of this paper. This research was partially supported by his NSERC Discovery Grant RGPIN-2017-03854. 
\section{A universal trivalent category}

Throughout this paper, we let $\kk$ denote a field of characteristic not equal to $2$. All categories are $\kk$-linear and all algebras and tensor products are over $\kk$ unless otherwise specified.  We let $\one$ denote the unit object of a monoidal category.  For objects $X$ and $Y$ in a category $\cC$, we denote by $\cC(X,Y)$ the vector space of morphisms from $X$ to $Y$.

\begin{defin} \label{Tdef}
    Fix $\delta\in \kk$ and $\alpha\in \kk^{\times}$.  Let $\Tcat_{\delta,\alpha}$ be the strict monoidal category generated by the object $\go$ and generating morphisms
\begin{equation} \label{lego}
        \mergemor \colon \go \otimes \go \to \go,\quad
        \crossmor \colon \go \otimes \go \to \go \otimes \go,\quad
        \cupmor \colon \one \to \go \otimes \go,\quad
        \capmor \colon \go \otimes \go \to \one,
\end{equation}
    subject to the following relations:
\begin{gather} \label{vortex}
        \begin{tikzpicture}[centerzero]
            \draw (-0.3,-0.4) -- (-0.3,0) arc(180:0:0.15) arc(180:360:0.15) -- (0.3,0.4);
        \end{tikzpicture}
        =
        \begin{tikzpicture}[centerzero]
            \draw (0,-0.4) -- (0,0.4);
        \end{tikzpicture}
        =
        \begin{tikzpicture}[centerzero]
            \draw (-0.3,0.4) -- (-0.3,0) arc(180:360:0.15) arc(180:0:0.15) -- (0.3,-0.4);
        \end{tikzpicture}
        \ ,\quad
        \splitmor
        :=
        \begin{tikzpicture}[anchorbase]
            \draw (-0.4,0.2) to[out=down,in=180] (-0.2,-0.2) to[out=0,in=225] (0,0);
            \draw (0,0) -- (0,0.2);
            \draw (0.3,-0.3) -- (0,0);
        \end{tikzpicture}
        =
        \begin{tikzpicture}[anchorbase]
            \draw (0.4,0.2) to[out=down,in=0] (0.2,-0.2) to[out=180,in=-45] (0,0);
            \draw (0,0) -- (0,0.2);
            \draw (-0.3,-0.3) -- (0,0);
        \end{tikzpicture}
        \ ,\quad
        \begin{tikzpicture}[centerzero]
            \draw (-0.2,-0.3) -- (-0.2,-0.1) arc(180:0:0.2) -- (0.2,-0.3);
            \draw (-0.3,0.3) \braiddown (0,-0.3);
        \end{tikzpicture}
        =
        \begin{tikzpicture}[centerzero]
            \draw (-0.2,-0.3) -- (-0.2,-0.1) arc(180:0:0.2) -- (0.2,-0.3);
            \draw (0.3,0.3) \braiddown (0,-0.3);
        \end{tikzpicture}
        \ ,
        \\ \label{venom}
        \begin{tikzpicture}[centerzero]
            \draw (0.2,-0.4) to[out=135,in=down] (-0.15,0) to[out=up,in=-135] (0.2,0.4);
            \draw (-0.2,-0.4) to[out=45,in=down] (0.15,0) to[out=up,in=-45] (-0.2,0.4);
        \end{tikzpicture}
        =
        \begin{tikzpicture}[centerzero]
            \draw (-0.15,-0.4) -- (-0.15,0.4);
            \draw (0.15,-0.4) -- (0.15,0.4);
        \end{tikzpicture}
        \ ,\quad
        \begin{tikzpicture}[centerzero]
            \draw (0.3,-0.4) -- (-0.3,0.4);
            \draw (0,-0.4) to[out=135,in=down] (-0.25,0) to[out=up,in=-135] (0,0.4);
            \draw (-0.3,-0.4) -- (0.3,0.4);
        \end{tikzpicture}
        =
        \begin{tikzpicture}[centerzero]
            \draw (0.3,-0.4) -- (-0.3,0.4);
            \draw (0,-0.4) to[out=45,in=down] (0.25,0) to[out=up,in=-45] (0,0.4);
            \draw (-0.3,-0.4) -- (0.3,0.4);
        \end{tikzpicture}
        \ ,\quad
        \begin{tikzpicture}[anchorbase,scale=0.8]
            \draw (-0.4,-0.5) -- (0.2,0.3) -- (0.4,0.1) -- (0,-0.5);
            \draw (0.2,0.3) -- (0.2,0.6);
            \draw (-0.4,0.6) -- (-0.4,0.1) -- (0.4,-0.5);
        \end{tikzpicture}
        =
        \begin{tikzpicture}[anchorbase]
            \draw (-0.4,-0.4) -- (-0.2,-0.2) -- (-0.2,0) -- (0.2,0.4);
            \draw (0,-0.4) -- (-0.2,-0.2);
            \draw (0.2,-0.4) -- (0.2,0) -- (-0.2,0.4);
        \end{tikzpicture}
        \ ,\quad
        \begin{tikzpicture}[anchorbase,scale=0.8]
            \draw (-0.4,0.5) -- (0.2,-0.3) -- (0.4,-0.1) -- (0,0.5);
            \draw (0.2,-0.3) -- (0.2,-0.6);
            \draw (-0.4,-0.6) -- (-0.4,-0.1) -- (0.4,0.5);
        \end{tikzpicture}
        =
        \begin{tikzpicture}[anchorbase]
            \draw (-0.4,0.4) -- (-0.2,0.2) -- (-0.2,0) -- (0.2,-0.4);
            \draw (0,0.4) -- (-0.2,0.2);
            \draw (0.2,0.4) -- (0.2,0) -- (-0.2,-0.4);
        \end{tikzpicture}
        \ ,
        \\ \label{chess}
        \begin{tikzpicture}[anchorbase]
            \draw (-0.15,-0.4) to[out=45,in=down] (0.15,0) arc(0:180:0.15) to[out=down,in=135] (0.15,-0.4);
        \end{tikzpicture}
        = \capmor
        \ ,\quad
        \begin{tikzpicture}[anchorbase]
            \draw (-0.2,-0.5) to[out=45,in=down] (0.15,-0.2) to[out=up,in=-45] (0,0) -- (0,0.2);
            \draw (0.2,-0.5) to [out=135,in=down] (-0.15,-0.2) to[out=up,in=-135] (0,0);
        \end{tikzpicture}
        = - \mergemor
        \ ,\quad
        \begin{tikzpicture}[centerzero]
            \draw  (0,-0.4) -- (0,-0.2) to[out=45,in=down] (0.15,0) to[out=up,in=-45] (0,0.2) -- (0,0.4);
            \draw (0,-0.2) to[out=135,in=down] (-0.15,0) to[out=up,in=-135] (0,0.2);
        \end{tikzpicture}
        =\alpha \
        \begin{tikzpicture}[centerzero]
            \draw(0,-0.4) -- (0,0.4);
        \end{tikzpicture}
        \ ,\quad
        \bubble = \delta 1_\one.
\end{gather}
\end{defin}
The category $\Tcat_{\delta,\alpha}$ is very similar to the one in \cite[Def.~2.1]{GSZ}. In fact, the defining relations are the same except the second one in \cref{chess}, where their trivalent vertex morphism is symmetric while ours is skew-symmetric. Define
\begin{equation}
    \dotcross
    :=
    \begin{tikzpicture}[centerzero]
        \draw (-0.2,-0.4) -- (-0.2,0.4);
        \draw (-0.2,-0.2) -- (0.4,0.4);
        \draw (0.4,-0.4) -- (-0.2,0.2);
    \end{tikzpicture}
    \ .
\end{equation}

\begin{prop} \label{windy}
    The following relations hold in $\Tcat_{\delta,\alpha}$:
\begin{gather} \label{topsy}
        \begin{tikzpicture}[anchorbase]
            \draw (-0.4,-0.2) to[out=up,in=180] (-0.2,0.2) to[out=0,in=135] (0,0);
            \draw (0,0) -- (0,-0.2);
            \draw (0.3,0.3) -- (0,0);
        \end{tikzpicture}
        =
        \mergemor
        =
        \begin{tikzpicture}[anchorbase]
            \draw (0.4,-0.2) to[out=up,in=0] (0.2,0.2) to[out=180,in=45] (0,0);
            \draw (0,0) -- (0,-0.2);
            \draw (-0.3,0.3) -- (0,0);
        \end{tikzpicture}
        \ ,\quad
        \triform
        :=
        \begin{tikzpicture}[centerzero]
          \draw (-0.2,-0.2) to (0,0);
          \draw (0.2,-0.2) to (0,0);
          \draw (0,0) arc(0:180:0.2) -- (-0.4,-0.2);
        \end{tikzpicture}
        =
        \begin{tikzpicture}[centerzero]
          \draw (-0.2,-0.2) to (0,0);
          \draw (0.2,-0.2) to (0,0);
          \draw (0,0) arc(180:0:0.2) -- (0.4,-0.2);
        \end{tikzpicture}
        \ ,\quad
        \explode
        :=
        \begin{tikzpicture}[centerzero]
          \draw (-0.2,0.2) to (0,0);
          \draw (0.2,0.2) to (0,0);
          \draw (0,0) arc(360:180:0.2) to (-0.4,0.2);
        \end{tikzpicture}
        =
        \begin{tikzpicture}[centerzero]
          \draw (-0.2,0.2) to (0,0);
          \draw (0.2,0.2) to (0,0);
          \draw (0,0) arc(180:360:0.2) to (0.4,0.2);
        \end{tikzpicture}
        \ ,
        \\ \label{turvy}
        \begin{tikzpicture}[centerzero]
            \draw (-0.2,0.3) -- (-0.2,0.1) arc(180:360:0.2) -- (0.2,0.3);
            \draw (-0.3,-0.3) to[out=up,in=down] (0,0.3);
        \end{tikzpicture}
        =
        \begin{tikzpicture}[centerzero]
            \draw (-0.2,0.3) -- (-0.2,0.1) arc(180:360:0.2) -- (0.2,0.3);
            \draw (0.3,-0.3) to[out=up,in=down] (0,0.3);
        \end{tikzpicture}
        \ ,\quad
        \begin{tikzpicture}[anchorbase]
            \draw (-0.2,0.2) -- (0.2,-0.2);
            \draw (-0.4,0.2) to[out=down,in=225,looseness=2] (0,0) to[out=45,in=up,looseness=2] (0.4,-0.2);
        \end{tikzpicture}
        =
        \crossmor
        =
        \begin{tikzpicture}[anchorbase]
            \draw (0.2,0.2) -- (-0.2,-0.2);
            \draw (0.4,0.2) to[out=down,in=-45,looseness=2] (0,0) to[out=135,in=up,looseness=2] (-0.4,-0.2);
        \end{tikzpicture}
        \ ,\quad
        \begin{tikzpicture}[anchorbase]
            \draw (-0.2,0.2) -- (0.2,-0.2);
            \draw (-0.4,0.2) to[out=down,in=225,looseness=2] (0,0) to[out=45,in=up,looseness=2] (0.4,-0.2);
            \draw[fill] (0,0) -- (-0.08,0.08) -- (-0.08,-0.07);
        \end{tikzpicture}
        =
        -\dotcross
        =
        \begin{tikzpicture}[anchorbase]
            \draw (0.2,0.2) -- (-0.2,-0.2);
            \draw (0.4,0.2) to[out=down,in=-45,looseness=2] (0,0) to[out=135,in=up,looseness=2] (-0.4,-0.2);
           \draw[fill] (0,0) -- (-0.08,0.07) -- (-0.08,-0.08);
        \end{tikzpicture}
        \ .
\end{gather}
\end{prop}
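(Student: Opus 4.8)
The plan is to obtain every relation in \eqref{topsy} and \eqref{turvy} by a direct string-diagram computation from the defining relations \eqref{vortex}--\eqref{chess}, using three basic moves: straightening a zig-zag via the first relation of \eqref{vortex}, expanding $\splitmor$ (and later $\dotcross$) via its definition, and sliding a strand or a trivalent vertex past another strand via \eqref{venom} or the ``transparency'' relations (the third relation of \eqref{vortex} and, once it is available, the first relation of \eqref{turvy}). The only thing requiring attention is the sign bookkeeping coming from the skew-symmetry of the trivalent vertex (the second relation of \eqref{chess}).

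I would start with the first relation of \eqref{topsy}. Reading the left-hand diagram as $(\capmor\otimes\id_\go)\circ(\id_\go\otimes\splitmor)$ and unfolding $\splitmor$ by its definition in \eqref{vortex}, the cup inside $\splitmor$ and the outer cap form a single zig-zag, which straightens by the first relation of \eqref{vortex} to leave exactly $\mergemor$; the right-hand diagram is the mirror argument using the other expression for $\splitmor$ from \eqref{vortex}. Capping off the output of this identity then yields the symmetry of $\triform$: expressing $\triform$ once as $\capmor\circ(\id_\go\otimes\mergemor)$ and once as $\capmor\circ(\mergemor\otimes\id_\go)$, one substitutes the identity just proved into the merge vertex of the second expression and straightens the resulting zig-zag to reach the first. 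The symmetry of $\explode$ is the vertical reflection of the same computation. For the first relation of \eqref{turvy}, ``sliding a strand over a cup'' is obtained from the third relation of \eqref{vortex} (``sliding a strand over a cap'') by rotating that identity through a half-turn, which is legitimate in a pivotal category.

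For the last two relations of \eqref{turvy} the statement is that the crossing, and the marked crossing $\dotcross$, are unchanged by the rotation induced by the pivotal structure, up to a sign in the marked case. For $\crossmor$ I would take the rotated diagram, use the transparency relations to pull the crossing past the caps and cups implementing the rotation, use the braid relation (second relation of \eqref{venom}) to move the remaining free strand through, and cancel the leftover pair of crossings by $\crossmor\circ\crossmor=\id$ (first relation of \eqref{venom}), together with the first relation of \eqref{chess} to remove any curl that appears; the mirror statement then follows from the symmetry of $\crossmor$ just established. For $\dotcross$ I would first rewrite it as a $\splitmor$ stacked below a $\mergemor$ as in its definition, run the same sequence of moves --- now also invoking the vertex-slide relations of \eqref{venom} to commute the two trivalent vertices past the crossing --- and observe that re-identifying the rotated diagram with $\dotcross$ forces exactly one use of the skew-symmetry relation (second relation of \eqref{chess}), which produces the sign.

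The step I expect to be the main obstacle is precisely this last one: establishing the rotation-invariance of $\crossmor$ and of $\dotcross$, where one has to combine the transparency relations, the braid relation, the vertex slides and the single skew-symmetry sign in the correct order, and must take care to justify each isotopy by an already-proved relation rather than treating planar isotopy as automatically available.
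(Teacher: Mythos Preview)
The paper gives no argument of its own here; it simply cites \cite[Prop.~2.4]{GSZ} and remarks that the sign in the last relation of \eqref{turvy} comes from the skew-symmetry of the trivalent vertex --- which is exactly your observation about the single use of the second relation of \eqref{chess}. So your plan amounts to supplying the proof the paper outsources, and in outline it is correct.

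There is one ordering subtlety to watch. You derive cup-slide (the first relation of \eqref{turvy}) by rotating cap-slide through a half-turn, and then prove rotation-invariance of $\crossmor$ using ``the transparency relations,'' which by that point include the cup-slide you just obtained. But applying the half-turn to cap-slide literally produces cup-slide with the transposed crossing $\crossmor^{\vee}$ in place of $\crossmor$, and identifying $\crossmor^{\vee}$ with $\crossmor$ is exactly the second relation of \eqref{turvy}; as written the two steps lean on each other. The easy fix is to reverse the order: in
\[
(\capmor\otimes\id^{\otimes 2})\circ(\id\otimes\crossmor\otimes\id)\circ(\id^{\otimes 2}\otimes\cupmor)
\]
a single use of cap-slide moves the crossing off the cap onto the two original input strands (interchange then lets it pass the cup, which lives on disjoint strands), after which the remaining cap--cup pair is a zig-zag and disappears, yielding $\crossmor$ directly --- no cup-slide, braid relation, or $\crossmor^{2}=\id$ required. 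With $\crossmor^{\vee}=\crossmor$ established first, your rotation argument for cup-slide is then clean, and the rest of the plan (including the single skew-symmetry sign for $\dotcross$) goes through.
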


\begin{proof}
      These relations are proved in \cite[Prop.~2.4]{GSZ}. Note, however, that the fourth relation in \cref{turvy} does not have a minus in it in \cite{GSZ}. The reason why we have a minus in this relation and they (in \cite[Prop.~2.4]{GSZ}) do not is because we also have a minus in the second relation of \cref{chess} but they do not (see \cite[Def.~2.1]{GSZ}). 
\end{proof}

Relations \cref{vortex,topsy,turvy} imply that the cups and caps equip $\Tcat_{\delta,\delta}$ with the structure of a \emph{strict pivotal} category.  This means that morphisms are invariant under isotopies fixing the top and bottom endpoints.  Thus,  it makes sense to allow horizontal strands in diagrams:
\begin{equation}\label{Hmor}
    \Hmor
    :=
    \begin{tikzpicture}[anchorbase]
        \draw (-0.4,-0.4) -- (-0.4,0) -- (-0.2,0.2) -- (0.2,-0.2) -- (0.4,0) -- (0.4,0.4);
        \draw (-0.2,0.2) -- (-0.2,0.4);
        \draw (0.2,-0.2) -- (0.2,-0.4);
    \end{tikzpicture}
    =
    \begin{tikzpicture}[anchorbase]
        \draw (0.4,-0.4) -- (0.4,0) -- (0.2,0.2) -- (-0.2,-0.2) -- (-0.4,0) -- (-0.4,0.4);
        \draw (0.2,0.2) -- (0.2,0.4);
        \draw (-0.2,-0.2) -- (-0.2,-0.4);
    \end{tikzpicture}
    \ .
\end{equation}
Moreover, it follows from the fact that $\go$ is self-dual that we have isomorphisms
\[
\Tcat_{\delta,\alpha}(\go^{\otimes m},\go^{\otimes n})\cong \Tcat_{\delta,\alpha}(\one,\go^{\otimes (m+n)}),\qquad m,n\in\N.
\]

It follows from  the first and the second relations in \cref{chess}, and the fact that $\Tcat_{\delta,\alpha}$ is pivotal, that
\begin{equation}\label{teardrop}
	\lollydrop = 
	\begin{tikzpicture}[centerzero]
		\draw (0,0.2) -- (0,.4);
		\draw[out=90,in=-40] (.15,0) to (0,.2);
		\draw[out=90,in=-130] (-.15,0) to (0,.2);
		\draw[out=-90,in=90] (-.15,0) to (.15,-.25);
		\draw[out=-90,in=90] (.15,0) to (-.15,-.25);
		\draw[out=-90,in=-90] (-.15,-.25) to (.15,-.25);
	\end{tikzpicture} =
     -\lollydrop 
     = 0
     \; .
\end{equation} 
\begin{rem}
	We can scale $\capmor$ and $\cupmor$, respectively, by $\alpha$ and $ \alpha^{-1}$ to make $ \alpha=1 $ in \cref{chess}. In fact, the categories $ \Tcat_{\delta,\alpha} $ and $\Tcat_{\delta,1}$ are isomorphic, and so we can remove $\alpha$ from the definition all together without losing any generality. However it is sometimes convenient to have the parameter $\alpha$, and so we include it in the definition for that reason.
\end{rem} 

The main result of this section is the construction of a functor from the category $\Tcat_{\delta,1}$, which will simply be denoted by $\Tcat_{\delta}$, to the category of representations of a simple complex Lie algebra of dimension $\delta$. Let $\fg$ be a finite-dimensional simple complex Lie algebra. Let  $[\cdot,\cdot]\colon \fg \times \fg \to \fg $ denote the Lie bracket of $\fg$, and let $\kappa\colon \fg \times \fg \to \kk$ denote its Killing form.  Choose a basis $\B_\fg$ of $\fg$, and let $\B_\fg^{\vee}=\{b^{\vee}\colon b\in \B_\fg\}$ be its dual basis defined by \[\kappa\left(a^\vee,b\right)=\delta_{a,b}, \qquad a,b\in \B_\fg.\]

\begin{lem}
	For $a,b,c\in \fg$, we have
\begin{equation}\label{lava}
		\kappa\left(a,[b,c]\right)=\kappa\left(b,[c,a]\right).
\end{equation} 
\end{lem}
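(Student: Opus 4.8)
The plan is to deduce \cref{lava} from two standard facts about the Killing form: its symmetry and its invariance. Both follow immediately from the trace formula $\kappa(x,y)=\tr(\ad x\circ\ad y)$ for $x,y\in\fg$, together with the cyclicity of the trace.

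First I would record symmetry: $\kappa(x,y)=\tr(\ad x\circ\ad y)=\tr(\ad y\circ\ad x)=\kappa(y,x)$, using $\tr(AB)=\tr(BA)$ for endomorphisms of a finite-dimensional vector space. Next I would establish invariance, i.e.\ $\kappa([x,y],z)=\kappa(x,[y,z])$ for all $x,y,z\in\fg$. Since $\ad$ is a Lie algebra homomorphism (this being a restatement of the Jacobi identity), we have $\ad[x,y]=\ad x\circ\ad y-\ad y\circ\ad x$, so
\[
\kappa([x,y],z)=\tr\bigl((\ad x\circ\ad y-\ad y\circ\ad x)\circ\ad z\bigr)=\tr(\ad x\circ\ad y\circ\ad z)-\tr(\ad y\circ\ad x\circ\ad z),
\]
and the analogous expansion $\kappa(x,[y,z])=\tr(\ad x\circ\ad y\circ\ad z)-\tr(\ad x\circ\ad z\circ\ad y)$ agrees with it by the cyclic identity $\tr(ABC)=\tr(BCA)=\tr(CAB)$ applied to $\tr(\ad x\circ\ad z\circ\ad y)=\tr(\ad y\circ\ad x\circ\ad z)$.

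Granting these, the lemma is a one-line computation: by invariance $\kappa(b,[c,a])=\kappa([b,c],a)$, and by symmetry $\kappa([b,c],a)=\kappa(a,[b,c])$, whence $\kappa(a,[b,c])=\kappa(b,[c,a])$. There is no real obstacle here: the only step carrying content is the invariance of $\kappa$, which is textbook, and the argument never uses simplicity of $\fg$ — it goes through verbatim for any invariant symmetric bilinear form on any Lie algebra. The identity is recorded only because it is needed to verify that the structure morphisms of the functor $\Phi$ are well defined, in particular that the images of $\capmor$ and $\mergemor$ satisfy the pivotal relations.
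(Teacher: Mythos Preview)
Your proof is correct and follows essentially the same approach as the paper: both arguments expand $\ad_{[\,\cdot\,,\,\cdot\,]}$ as a commutator of adjoint maps and invoke cyclicity of the trace. The paper simply does this in a single displayed chain $\tr(\ad_a\ad_{[b,c]})=\tr(\ad_b\ad_{[c,a]})$ without separately naming ``symmetry'' and ``invariance'', whereas you record those two properties first and then combine them; the underlying content is identical.
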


\begin{proof}
	We have
	\[	\tr\left(\ad_{a}\ad_{[b,c]}\right) = \tr\left(\ad_a \left(\ad_b\ad_c-\ad_c\ad_b\right)\right)=\tr\left(\ad_b\left(\ad_c\ad_a-\ad_a\ad_c\right)\right)=\tr\left(\ad_b\ad_{[c,a]}\right). \]
\end{proof}
\begin{lem} For $ a\in \fg$, we have
\begin{equation}\label{magma}
		\sum_{b \in \B_\fg}  [a , b]  \otimes b^\vee 
		= \sum_{b \in \B_\fg} b \otimes  [b^\vee , a] 
\end{equation}
\end{lem}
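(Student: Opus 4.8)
The plan is to reduce \eqref{magma} to the cyclic invariance \eqref{lava} of the Killing form by expanding both sides in the bases $\B_\fg$ and $\B_\fg^\vee$. First I would expand the left-hand side: for each $b\in\B_\fg$, since $\kappa(c^\vee,d)=\delta_{c,d}$ we may write $[a,b]=\sum_{c\in\B_\fg}\kappa\bigl(c^\vee,[a,b]\bigr)\,c$, so that
\[
\sum_{b\in\B_\fg}[a,b]\otimes b^\vee=\sum_{b,c\in\B_\fg}\kappa\bigl(c^\vee,[a,b]\bigr)\,c\otimes b^\vee.
\]
Next I would expand the right-hand side, this time in the dual basis: writing $[b^\vee,a]=\sum_{c\in\B_\fg}\kappa\bigl(c,[b^\vee,a]\bigr)\,c^\vee$ (using $\kappa(c,d^\vee)=\delta_{c,d}$), we get
\[
\sum_{b\in\B_\fg}b\otimes[b^\vee,a]=\sum_{b,c\in\B_\fg}\kappa\bigl(c,[b^\vee,a]\bigr)\,b\otimes c^\vee.
\]
After relabelling $b\leftrightarrow c$ in the first expression, comparing coefficients of $b\otimes c^\vee$ shows it suffices to verify the scalar identity $\kappa\bigl(b^\vee,[a,c]\bigr)=\kappa\bigl(c,[b^\vee,a]\bigr)$ for all $b,c\in\B_\fg$.

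This scalar identity is then immediate from \eqref{lava} applied twice (equivalently, from the full cyclic symmetry $\kappa(x,[y,z])=\kappa(y,[z,x])=\kappa(z,[x,y])$): taking $x=b^\vee$, $y=a$, $z=c$ gives $\kappa(b^\vee,[a,c])=\kappa(a,[c,b^\vee])$, and taking $x=a$, $y=c$, $z=b^\vee$ gives $\kappa(a,[c,b^\vee])=\kappa(c,[b^\vee,a])$. Summing over $b$ and $c$ yields \eqref{magma}.

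There is no real obstacle here; the only point requiring care is the bookkeeping, namely expanding $[a,b]$ in the basis $\B_\fg$ but $[b^\vee,a]$ in the dual basis $\B_\fg^\vee$, so that the coefficients come out as Killing-form pairings in the correct slots and the cyclic identity applies cleanly. One could alternatively argue conceptually: $\sum_{b}b\otimes b^\vee$ is the canonical element of $\fg\otimes\fg$ corresponding to $\id_\fg$ under the isomorphism $\fg\otimes\fg\cong\End(\fg)$ induced by $\kappa$, hence is $\ad$-invariant, and \eqref{magma} is precisely the statement that $(\ad_a\otimes\id)\bigl(\sum_b b\otimes b^\vee\bigr)=-(\id\otimes\ad_a)\bigl(\sum_b b\otimes b^\vee\bigr)$. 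However, checking that the adjoint action transports correctly through that isomorphism reduces to the same computation, so I would present the direct basis argument.
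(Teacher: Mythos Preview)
Your proof is correct and follows essentially the same route as the paper: expand in the basis, move scalars across the tensor, and invoke the cyclic invariance \eqref{lava}. The paper streamlines this slightly by manipulating only the left-hand side into the right-hand side and using a single application of \eqref{lava} (note that $\kappa(b^\vee,[a,c])=\kappa(c,[b^\vee,a])$ is itself a direct instance of \eqref{lava}, so your two-step cycle through $\kappa(a,[c,b^\vee])$ is not needed), but the argument is the same.
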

\begin{proof} 
	We have
\begin{multline*}
		\sum_{b \in \B_\fg}  [a , b]  \otimes b^\vee 
		 =\sum_{b,c\in\B_\fg}  \kappa\left(c^\vee, [a,b]\right)c\otimes b^\vee= \sum_{b,c\in \B_\fg} c\otimes \kappa\left(c^\vee,[a,b]\right)b^\vee\\
		 \overset{\cref{lava}}{=}\sum_{b,c\in \B_\fg}c\otimes \kappa\left(b,[c^\vee,a]\right)b^\vee= \sum_{c\in \B_\fg}c\otimes [c^\vee,a].\qedhere
\end{multline*} 
\end{proof}

Let $\fg\md$ denote the category of finite-dimensional representations of $\fg$. We denote the adjoint representation of $\fg$ by the same symbol $\fg$. The following proposition is the main result of the section. 

\begin{prop}\label{magneto} 
	For $\delta=\dim_{\kk}\fg$, there is a unique monoidal functor
	\[
	\Phi \colon \Tcat_{\delta} \to \fg\md
	\]
	given on objects by $\go \mapsto \fg$ and on morphisms by
\begin{align*}
		\Phi(\mergemor) &\colon \fg \otimes \fg \to \fg,&
		a \otimes b &\mapsto [a,b],
		\\
		\Phi(\crossmor) &\colon\fg \otimes \fg \to \fg \otimes \fg,&
		a \otimes b &\mapsto b \otimes a,
		\\
		\Phi(\cupmor) &\colon \kk \to \fg \otimes \fg, &
		1& \mapsto \sum_{b \in \B_\fg} b \otimes b^\vee,
		\\
		\Phi(\capmor) &\colon \fg \otimes \fg \to \kk,&
		a \otimes b &\mapsto   \kappa(a,b).
\end{align*}
	Furthermore
\begin{equation} \label{Gsplit}
		\Phi \left( \splitmor \right) \colon \fg \to \fg \otimes \fg,\qquad \qquad \qquad \qquad \quad 
		a \mapsto	\sum_{b \in \B_\fg} [a,b] \otimes b^\vee.
\end{equation}
\end{prop}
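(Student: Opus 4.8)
The strategy is to exploit the universal property built into the presentation of $\Tcat_\delta = \Tcat_{\delta,1}$: since $\Tcat_\delta$ is the strict monoidal category generated by $\go$ and the four morphisms of \cref{lego} subject to \cref{vortex,venom,chess}, giving a monoidal functor $\Tcat_\delta \to \fg\md$ amounts precisely to exhibiting in $\fg\md$ an object and four morphisms satisfying those relations, and uniqueness is then automatic. Formula \cref{Gsplit} comes along for free by unwinding the definition of $\splitmor$ in \cref{vortex} as $\mergemor$ precomposed on one leg with $\cupmor$: applying $\Phi$ sends $a \mapsto \sum_{b\in\B_\fg}[a,b]\otimes b^\vee$. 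Thus the actual work is (i) to check that each of the four generating morphisms is sent to a genuine $\fg$-module homomorphism, so that $\Phi$ indeed lands in $\fg\md$, and (ii) to verify the relations.

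Step (i) is routine. The image of $\crossmor$ is the flip $x\otimes y \mapsto y\otimes x$, which is $\fg$-equivariant by naturality of the symmetric braiding on $\fg\md$; the image of $\capmor$ is $\kappa$, which is $\ad$-invariant by \cref{lava}; the image of $\cupmor$ is the Casimir tensor $\sum_b b\otimes b^\vee$, whose $\fg$-invariance is equivalent to invariance and nondegeneracy of $\kappa$; and the image of $\mergemor$ is the bracket, whose equivariance $[z,[x,y]] = [[z,x],y] + [x,[z,y]]$ is the Jacobi identity.

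For step (ii) I would group the relations by type. The ``symmetric group'' relations in \cref{venom} --- $\crossmor^2 = \id$, the braid relation, and the two relations that slide a trivalent vertex past a crossing --- all reduce to naturality of the flip and hold automatically once $\Phi(\mergemor)$ and $\Phi(\splitmor)$ are known to be $\fg$-module maps. The ``duality'' relations in \cref{vortex} --- the two zig-zag identities, the equality of the two descriptions of $\splitmor$, and pulling a strand past a cap --- reduce to the defining property $\kappa(b^\vee,c) = \delta_{b,c}$ of the dual basis, to \cref{magma} (which handles precisely that equality of descriptions), and to symmetry of $\kappa$. Among the relations in \cref{chess}: the first (curl) relation holds because the braiding on $\fg\md$ carries no twist; the relation equating $\crossmor$ followed by $\mergemor$ with $-\mergemor$ is exactly skew-symmetry of the bracket, $[b,a] = -[a,b]$ --- this is where the sign distinguishing our conventions from those of \cite{GSZ} enters; the closed bubble $\bubble = \delta\,1_\one$ is the computation $\sum_{b\in\B_\fg}\kappa(b,b^\vee) = \dim_\kk\fg = \delta$; and, using $\alpha=1$, the open-bubble relation asserts that $\sum_{b\in\B_\fg}[[a,b],b^\vee] = a$ for all $a\in\fg$.

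This last identity is the only step that is not pure formalism, and I expect it to be the \emph{main obstacle}. The plan: the map $a \mapsto \sum_b[[a,b],b^\vee]$ is a composite of $\fg$-module maps, hence an endomorphism of $\fg$ as a $\fg$-module; since $\fg$ is simple it is irreducible over itself, so Schur's lemma forces this map to be a scalar $\lambda\,\id_\fg$. One application of the Jacobi identity --- together with the fact that the Casimir tensor is symmetric, so that $\sum_b[b,b^\vee] = 0$ --- rewrites the operator as $\sum_b \ad_b\ad_{b^\vee}$, and taking traces gives $\lambda\dim_\kk\fg = \sum_b\tr(\ad_b\ad_{b^\vee}) = \sum_b\kappa(b,b^\vee) = \dim_\kk\fg$, whence $\lambda = 1$. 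Equivalently, this is the classical fact that the Casimir operator built from the Killing form acts as the identity on the adjoint representation. With this dictionary in place, the remaining checks are mechanical diagram chases.
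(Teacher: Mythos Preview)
Your proposal is correct and follows essentially the same route as the paper: verify that the images of the generators satisfy the defining relations, with the only nontrivial step being the open-bubble relation in \cref{chess}, handled via Schur's lemma on the simple module $\fg$ and a trace computation. The paper's rewriting of $\sum_b[[a,b],b^\vee]$ as $\sum_b\ad_{b^\vee}\ad_b$ uses only skew-symmetry twice (not Jacobi, as you propose), but your variant via Jacobi and $\sum_b[b,b^\vee]=0$ works equally well.
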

\begin{proof}
	We need to check that $\Phi$ preserves the relations in \cref{Tdef}. The verification for the first, second and fourth relations in \cref{chess} is straightforward. For the remaining relation in \cref{chess}, we compute \[
	\Phi
	\left(
	\begin{tikzpicture}[centerzero]
		\draw  (0,-0.4) -- (0,-0.2) to[out=45,in=down] (0.15,0) to[out=up,in=-45] (0,0.2) -- (0,0.4);
		\draw (0,-0.2) to[out=135,in=down] (-0.15,0) to[out=up,in=-135] (0,0.2);
	\end{tikzpicture}
	\right)
	(a)
	=  \sum_{b\in \B_\fg} \left[\left[a,b\right],b^{\vee}\right]
	=\sum_{b \in \B_\fg}\left[b^{\vee},\left[b,a\right]\right]. \]
	Thus 	 \[
	\Phi
	\left(
	\begin{tikzpicture}[centerzero]
		\draw  (0,-0.4) -- (0,-0.2) to[out=45,in=down] (0.15,0) to[out=up,in=-45] (0,0.2) -- (0,0.4);
		\draw (0,-0.2) to[out=135,in=down] (-0.15,0) to[out=up,in=-135] (0,0.2);
	\end{tikzpicture}
	\right)= \sum_{b \in \B_\fg}\mathrm{ad}_{b^{\vee}}\circ \mathrm{ad}_{b}.\]
	Since $\fg$ is simple, the adjoint representation is irreducible and so by Schur's Lemma we have \[ \sum_{b \in \B_\fg}\mathrm{ad}_{b^{\vee}}\circ \mathrm{ad}_{b}=\alpha \id_{\fg},\]
	for some $\alpha\in \kk$. Applying the trace to both sides of the equality we get \[\delta = \sum_{b \in \B_\fg}\kappa(b^\vee, b)=  \sum_{b \in \B_\fg}\tr\left(\mathrm{ad}_{b^{\vee}}\circ \mathrm{ad}_{b}\right)= \alpha\tr(\id_{\fg})=\alpha \delta.\]
	Hence $\alpha=1$. This shows that $\Phi$ preserves the third relation in \cref{chess}.
	
	The verification of the equalities in \cref{venom} and the first two relations in \cref{vortex} is another easy computation. For second-to-last equality in \cref{vortex}, we have
	\[
	\Phi
	\left(
	\begin{tikzpicture}[anchorbase]
		\draw (-0.4,0.2) to[out=down,in=180] (-0.2,-0.2) to[out=0,in=225] (0,0);
		\draw (0,0) -- (0,0.2);
		\draw (0.3,-0.3) -- (0,0);
	\end{tikzpicture}
	\right)
	(a)
	=
	\sum_{b \in \B_\fg} b \otimes  [b^\vee , a] 
	\overset{\cref{magma}}{=}
	\sum_{b \in \B_\fg}  [a , b]  \otimes b^\vee
	=
	\Phi
	\left(
	\begin{tikzpicture}[anchorbase]
		\draw (0.4,0.2) to[out=down,in=0] (0.2,-0.2) to[out=180,in=-45] (0,0);
		\draw (0,0) -- (0,0.2);
		\draw (-0.3,-0.3) -- (0,0);
	\end{tikzpicture}
	\right)
	(a).  
	\]
	This shows that $\Phi$ preserves the second-to-last equality in \cref{vortex}. 
	
	Finally, for the fourth equality in \cref{vortex}, we have
	\[
	\Phi
	\left(
	\begin{tikzpicture}[centerzero]
		\draw (-0.2,-0.3) -- (-0.2,-0.1) arc(180:0:0.2) -- (0.2,-0.3);
		\draw (-0.3,0.3) \braiddown (0,-0.3);
	\end{tikzpicture}
	\right)
	(a \otimes b \otimes c)
	= \kappa(a, c) b
	= \Phi
	\left(
	\begin{tikzpicture}[centerzero]
		\draw (-0.2,-0.3) -- (-0.2,-0.1) arc(180:0:0.2) -- (0.2,-0.3);
		\draw (0.3,0.3) \braiddown (0,-0.3);
	\end{tikzpicture}
	\right)
	(a \otimes b \otimes c).\qedhere
	\] 
\end{proof}

\section{Quotients of the trivalent category\label{QTC}}
For the rest of this paper we assume $\delta\neq -2$. (See \cref{delta} for a motivation of this restriction.) In this section we study some quotients of $\Tcat_{\delta,\alpha}$. We get these quotients we impose some  dimension restrictions on the hom-spaces, which forces some relations to hold, and thus leads to these quotient categories. First recall that a \emph{tensor ideal} in a $\kk$-linear monoidal category $\cC$ is a collection $\cI$ of vector subspaces $\cI(X,Y)$ of $\cC(X,Y)$ for all $X,Y \in \Ob(\cC)$ such that
\[
    \cC(Y,Z) \circ \cI(X,Y) \subseteq \cI(X,Z)
    \quad \text{and} \quad
    \cI(X,Y) \circ \cC(Z,X) \subseteq \cI(Z,Y),
\]
and  
\[
    1_Z \otimes \cI(X,Y) \subseteq \cI(Z \otimes X, Z \otimes Y)
    \quad \text{and} \quad
    \cI(X,Y) \otimes 1_Z \subseteq \cI(X \otimes Z, Y \otimes Z),
\]
for all $X,Y,Z \in \Ob(\cC)$, with $1_Z$ being the identity morphism of $Z$.  If $\cI$ is a tensor ideal, it follows that $f \otimes g$ and $g \otimes f$ belong to $\cI$ for arbitrary morphisms $f$ in $\cI$ and $g$ in $\cC$.  If $\cI$ is a tensor ideal of $\cC$, then the \emph{quotient category} $\cC/\cI$ is the category with
\[
    \Ob (\cC/\cI) = \Ob(\cC),\quad
    (\cC/\cI)(X,Y) = \cC(X,Y) / \cI(X,Y).
\]
The composition and tensor product in $\cC/\cI$ are induced by those in $\cC$.

Following \cite{GSZ}, consider, for $n\geq 2$, the linear operators
\begin{align*}
	\Rot \colon \Tcat_\delta(\go^{\otimes n}, \go^{\otimes n}) &\to \Tcat_\delta(\go^{\otimes n}, \go^{\otimes n}),
	&
	\Rot
	\left(
	\begin{tikzpicture}[anchorbase]
		\draw (-0.1,-0.4) -- (-0.1,0.4);
		\draw[line width=2] (-0.1,-0.4) -- (-0.1,0);
		\draw (0.1,-0.4) -- (0.1,0);
		\draw[line width=2] (0.1,0.4) -- (0.1,0);
		\filldraw[fill=white,draw=black] (-0.25,0.2) rectangle (0.25,-0.2);
		\node at (0,0) {$\scriptstyle{f}$};
	\end{tikzpicture}
	\right)
	&=
	\begin{tikzpicture}[anchorbase]
		\draw (-0.25,0.2) rectangle (0.25,-0.2);
		\node at (0,0) {$\scriptstyle{f}$};
		\draw (-0.4,-0.4) -- (-0.4,0.2) arc (180:0:0.15);
		\draw (0.4,0.4) -- (0.4,-0.2) arc(360:180:0.15);
		\draw (-0.1,-0.4) -- (-0.1,-0.2);
		\draw (0.1,0.4) -- (0.1,0.2);
		\draw[line width=2] (-0.1,-0.4) -- (-0.1,-.2);
		\draw[line width=2] (0.1,0.4) -- (0.1,0.2);
	\end{tikzpicture}
	\ ,
	\\
	\Switch \colon \Tcat_\delta(\go^{\otimes 2}, \go^{\otimes n}) &\to \Tcat_\delta(\go^{\otimes 2}, \go^{\otimes n}),
	&
	\Switch
	\left(
	\begin{tikzpicture}[anchorbase]
		\draw (-0.1,-0.4) -- (-0.1,0);
		\draw (0.1,-0.4) -- (0.1,0);
		\draw[line width=2] (0,0) -- (0,0.4);
		\filldraw[fill=white,draw=black] (-0.25,0.2) rectangle (0.25,-0.2);
		\node at (0,0) {$\scriptstyle{f}$};
	\end{tikzpicture}
	\right)
	&=
	\begin{tikzpicture}[anchorbase]
		\draw (-0.1,-0.5) -- (0.1,-0.2);
		\draw (0.1,-0.5) -- (-0.1,-0.2);
		\draw[line width=2] (0,0) -- (0,0.4);
		\filldraw[fill=white,draw=black] (-0.25,0.2) rectangle (0.25,-0.2);
		\node at (0,0) {$\scriptstyle{f}$};
	\end{tikzpicture}
	\ .
\end{align*}
That is,
\[
\Rot(f) = \left( \capmor \otimes 1_\go^{\otimes n} \right) \circ (1_\go \otimes f \otimes 1_\go) \circ (1_\go^{\otimes n} \otimes \cupmor)\quad\text{and}\quad
\Switch(f) = f \circ \crossmor.
\]
We have
\begin{equation} \label{rotary}
	\begin{aligned}
		\Rot \left(\, \jail\, \right) &= \hourglass\, ,&
		\Rot \left(\, \hourglass\, \right) &= \jail\, ,&
		\Rot \left(\, \crossmor\, \right) &= \crossmor\, ,
		\\
		\Rot \left(\, \Hmor\, \right) &= \Imor\, ,&
		\Rot \left(\, \Imor\, \right) &= \Hmor\, ,&
		\Rot \left(\, \dotcross\, \right) &= -\dotcross\, .
	\end{aligned}
\end{equation}
and
\begin{equation} \label{flick}
	\begin{aligned}
		\Switch \left(\, \jail\, \right) &= \crossmor\, ,&
		\Switch \left(\, \crossmor\, \right) &= \jail\, ,&
		\Switch \left(\, \hourglass\, \right) &= \hourglass\, ,
		\\
		\Switch \left(\, \Hmor\, \right) &= -\dotcross\, ,&
		\Switch \left(\, \dotcross\, \right) &=- \Hmor\, ,&
		\Switch \left(\, \Imor\, \right) &= -\Imor\, .
	\end{aligned}
\end{equation}
Note that any tensor ideal of $\Tcat_{\delta,\alpha}$ is invariant under $\Rot$ and $\Switch$.

Vector product algebras will show up in some of the quotients of $\Tcat_{\delta,\alpha}$, so we give their definition in the general context of symmetric monoidal $\kk$-linear categories. Recall that we are assuming that $\kk$ is not of characteristic $2$. 

\begin{defin}\label{vcpdef}
A \emph{vector product algebra object} in a symmetric monoidal $\kk$-linear category is an object $\go$ together with morphisms $\capmor \colon \go \otimes \go \to \kk$ and $\cupmor\colon \kk \to \go \otimes \go$, and a morphism $\mergemor\colon  \go \otimes \go \to \go$ subject to relations \cref{vortex}, the first two  relations in~\cref{chess}  and 
\begin{equation}\label{bridge}
	\begin{tikzpicture}[anchorbase,scale=.25]
	\draw (-0.5,0) -- (1,1.5);
	\draw (2.5,0) -- (1,1.5);
	\draw (1,0) -- (0.25,0.75);
	\draw (1,1.5) -- (1,2.5);
\end{tikzpicture} + \begin{tikzpicture}[anchorbase,scale=.25]
	\draw (-0.5,0) -- (1,1.5);
	\draw (2.5,0) -- (1,1.5);
	\draw (1,0) -- (1.75,0.75);
	\draw (1,1.5) -- (1,2.5);
\end{tikzpicture}
=
2\; \begin{tikzpicture}[anchorbase,scale=.25]
	\draw[out=90,in=90,looseness=3] (-1,0) to (1,0);
	\draw (0,0) -- (1,2.5);
\end{tikzpicture} 
-
\begin{tikzpicture}[anchorbase,scale=.25]
	\draw (-1,0) -- (.5,2.5);
	\draw[out=90,in=90,looseness=2] (0,0) to (1.5,0);
\end{tikzpicture} 
-
\begin{tikzpicture}[anchorbase,scale=.25]
	\draw (2.5,0) -- (1,2.5);
	\draw[out=90,in=90,looseness=2] (0,0) to (1.5,0);
\end{tikzpicture}
.
\end{equation}
\end{defin}

In a symmetric pivotal category, relation \cref{bridge} is equivalent to  
\begin{equation}\label{viktor}
	\Imor +\; \Hmor = 2\; \crossmor -\; \jail -\; \hourglass,
\end{equation}
where $\splitmor $ is defined as in \cref{vortex} and $\Hmor$ is defined by \cref{Hmor}. In the category of vector spaces, \cref{vcpdef} stipulates the existence of an inner product $\langle -,-\rangle$ and an alternating linear map $\times \colon\go \times \go \to \go$ such that
\begin{gather}
\langle x,y\times z \rangle = \langle x\times y , z\rangle,\label{fries}\\
\left(x \times y\right) \times z + x\times \left(y \times z\right)  = 2 \langle x, z\rangle y -  \langle x, y\rangle  z - \langle z, y\rangle  x, \label{fish}
\end{gather}
for all elements $x,y,z\in \go$. Here \cref{fish} is relation \cref{bridge} written algebraically.  Assuming that $\times$ is alternating, it can be shown (\cite[Proposition~1]{Street19}) that equation \cref{fish} is equivalent to the equation
\begin{equation}\label{crpd}
\left(x\times y \right)\times x = \langle x, x\rangle y- \langle x , y \rangle x.
\end{equation}
\begin{eg}
	The Euclidean space $\R^3$, equipped with the usual inner product and the cross product of vectors, is a vector product algebra. Notice, however, that in $\R^3$, we actually have $ (x\times y )\times z = \langle x, z\rangle y - \langle x, y\rangle z$, for all $x,y,z\in \R^3$. This is more restrictive than the  relation \cref{crpd}, and is actually the motivation for the defining relation \cref{bridge}.
\end{eg}

\begin{rem}
		It turns out that, over the real numbers, there are not many vector product algebras. In fact, by Hurwitz' theorem \cite{Hur} the only possible dimensions of a vector product algebra are $0,1,3$ and $7$. A diagrammatic proof of this result can be found \cite[\S 3]{West}.  Moreover, vector product algebras, over the real numbers, are exactly the imaginary parts of the four normed division algebras over the reals: the real numbers, the complex numbers, the quaternions and the octonions. For a nice exposition on this subject and related topics we refer to Baez' paper \cite{Baez}. 
\end{rem}
Before we dive into computations, we need one more definition, that of a Lie algebra object in a   symmetric monoidal category.
\begin{defin}
	A \emph{Lie algebra object} in a $\kk$-linear symmetric monoidal category $\cC$, with braiding $s$, consists of an object $\fg \in \cC$ and a morphism, called the \emph{Lie bracket},  $[\cdot ,\cdot]\colon \fg\otimes \fg\to \fg$ subject to the following two conditions:
	\begin{itemize}
		\item[1)] the bracket $[\cdot,\cdot]$ is \emph{skew-symmetric}, that is $[\cdot,\cdot]\circ s =-[\cdot,\cdot]$, and 
		\item[2)] satisfies the \emph{Jacobi identity}
		\[
		[\cdot,\cdot]\circ \left([\cdot,\cdot]\otimes \id_{\fg}\right)\circ\left(\id_{\fg}\otimes s\right) = [\cdot,\cdot]\circ \left([\cdot,\cdot]\otimes\id_{\fg}\right) -[\cdot,\cdot]\circ \left(\id_{\fg}\otimes[\cdot,\cdot]\right).
		\]
	\end{itemize} 
\end{defin}
In the language of string diagrams, we represent the Lie bracket by a trivalent vertex ${\mergemor\colon \fg \otimes \fg\to \fg}$. Then the skew-symmetry can be translated to the diagrammatic   identity 
\[ 
\begin{tikzpicture}[anchorbase]
	\draw (-0.2,-0.5) to[out=45,in=down] (0.15,-0.2) to[out=up,in=-45] (0,0) -- (0,0.2);
	\draw (0.2,-0.5) to [out=135,in=down] (-0.15,-0.2) to[out=up,in=-135] (0,0);
\end{tikzpicture} = -\mergemor
,
\]
while the Jacobi identity is depicted as 
\[
\begin{tikzpicture}[anchorbase,scale=.25]
	\draw (-0.5,0) -- (1,1.5);
	\draw (2.5,0) -- (0.5,1);
	\draw (1,0) -- (1,1.5);
	\draw (1,1.5) -- (1,2.5);
\end{tikzpicture} = \begin{tikzpicture}[anchorbase,scale=.25]
\draw (-0.5,0) -- (1,1.5);
\draw (2.5,0) -- (1,1.5);
\draw (1,0) -- (0.25,0.75);
\draw (1,1.5) -- (1,2.5);
\end{tikzpicture} - \begin{tikzpicture}[anchorbase,scale=.25]
\draw (-0.5,0) -- (1,1.5);
\draw (2.5,0) -- (1,1.5);
\draw (1,0) -- (1.75,0.75);
\draw (1,1.5) -- (1,2.5);
\end{tikzpicture}
.\]
In a pivotal category, the Jacobi identity is equivalent to
\begin{equation}\label{Jacob}
	\dotcross = \Imor - \Hmor.
\end{equation}
\begin{rem}\label{bird}
Let $\cI$ be a tensor ideal in $\Tcat_{\delta,\alpha}$. If either $\capmor=0$ or $\cupmor=0$ in $\Tcat_{\delta,\alpha}/\cI$, then, from the first two relations in \cref{vortex}, it follows that $1_\go=0$, and so $\go$ is the zero object. Hence the category  $\Tcat_{\delta,\alpha}/\cI$ is trivial. An analogous argument using \cref{chess} show that, if $\mergemor=0$ in $\Tcat_{\delta,\alpha}/\cI$, then this quotient is trivial. This follows from the third relation in \cref{chess}.
\end{rem}
\begin{lem}\label{jhcmor}
	Let $\cI$ be a tensor ideal in $\Tcat_{\delta,\alpha}$. If the morphisms 
	\begin{equation}\label{three}
		\jail \, , \quad \hourglass\, , \quad \crossmor
	\end{equation}
	are linearly dependent in the quotient $ \Tcat_{\delta,\alpha}/\cI $, then this quotient is trivial.
\end{lem}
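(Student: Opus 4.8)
The plan is to use the operators $\Rot$ and $\Switch$ of \cref{QTC} to promote a single linear dependence into a symmetric family of them, and then read off which of the three morphisms are forced to coincide in the quotient. Indeed, \cref{rotary,flick} show that on the subspace spanned by $\jail$, $\hourglass$, $\crossmor$ the operator $\Rot$ acts as the transposition interchanging $\jail$ and $\hourglass$ (fixing $\crossmor$), while $\Switch$ acts as the transposition interchanging $\jail$ and $\crossmor$ (fixing $\hourglass$); together these generate the symmetric group $\fS_3$ of all permutations of $\{\jail,\hourglass,\crossmor\}$. Since any tensor ideal of $\Tcat_{\delta,\alpha}$ is invariant under $\Rot$ and $\Switch$, these operators descend to $\cC:=\Tcat_{\delta,\alpha}/\cI$. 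Given a relation $\lambda\jail+\mu\hourglass+\nu\crossmor=0$ in $\cC$ with $(\lambda,\mu,\nu)\neq(0,0,0)$, I would apply the transpositions and subtract: whenever two of the scalars $\lambda,\mu,\nu$ differ, the difference of two of these relations forces two of $\jail,\hourglass,\crossmor$ to coincide in $\cC$, and then transitivity of the $\fS_3$-action forces all three to coincide, in particular $\jail=\crossmor$; if instead $\lambda=\mu=\nu$, this common value is nonzero and $\jail+\hourglass+\crossmor=0$ in $\cC$. So it suffices to show that each of the conclusions ``$\jail=\crossmor$'' and ``$\jail+\hourglass+\crossmor=0$'' forces $\cC$ to be trivial.

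In the first case, $\mergemor\circ\jail=\mergemor\circ\crossmor$, and the skew-symmetry relation $\mergemor\circ\crossmor=-\mergemor$ (the second relation in \cref{chess}) gives $\mergemor=-\mergemor$; since $\kk$ is not of characteristic $2$, this forces $\mergemor=0$ in $\cC$, whence $\cC$ is trivial by \cref{bird}. In the second case, I would apply $\capmor\circ(-)$ to the relation $\jail+\hourglass+\crossmor=0$. Here $\capmor\circ\jail=\capmor$; the first relation in \cref{chess} gives $\capmor\circ\crossmor=\capmor$; and $\capmor\circ\hourglass=\capmor\circ\cupmor\circ\capmor=\delta\,\capmor$ because $\bubble=\delta 1_\one$ (the last relation in \cref{chess}). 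Adding these three identities gives $(2+\delta)\capmor=0$, and since $\delta\neq-2$ we conclude $\capmor=0$, so $\cC$ is again trivial by \cref{bird}.

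I expect the one nontrivial step to be the first: recognising that $\Rot$ and $\Switch$ generate the full symmetric group on $\{\jail,\hourglass,\crossmor\}$ is what converts the soft hypothesis of \emph{some} linear dependence into the rigid dichotomy above. Once that is in hand both cases collapse immediately --- the first because the braiding on $\go\otimes\go$ cannot equal the identity unless $\mergemor$ vanishes, and the second because closing the relation off with a cap produces the factor $2+\delta$, which is precisely where the standing assumption $\delta\neq-2$ of \cref{QTC} is used. The points to be careful about are the bookkeeping in the difference argument (keeping track of which scalar multiplies which diagram after a transposition) and correctly identifying $\capmor\circ\crossmor=\capmor$ and $\bubble=\delta 1_\one$ among the relations of \cref{chess}.
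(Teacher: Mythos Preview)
Your argument is correct and follows essentially the same route as the paper: use the $\Rot/\Switch$ symmetry to reduce an arbitrary dependence either to all three diagrams coinciding or to $\jail+\hourglass+\crossmor=0$, then kill each case with a one-line closure ($\mergemor$ in the first, $\capmor$ in the second). The only cosmetic differences are that the paper phrases the symmetry step as successive applications of $1-\Rot$ and $\Switch$ rather than invoking $\fS_3$ explicitly, and in the first case it uses $\jail=\hourglass$ together with \cref{teardrop} instead of your $\jail=\crossmor$ plus skew-symmetry; both variants are equally short.
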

\begin{proof}
	Suppose there is a nontrivial relation
	\begin{equation}\label{door}
		a_1\, \jail +a_2 \, \hourglass +\, a_3 \, \crossmor =0, \quad a_1,a_2,a_3\in \kk.
	\end{equation}
	Then 
	\[
	(a_1-a_2)\left(\jail -\hourglass\right)=(1-\Rot)\left(a_1\, \jail +a_2 \, \hourglass +\, a_3 \, \crossmor\right)=0.
	\]
	If $a_1\neq a_2$, then $\jail=\hourglass$. Composing with $\mergemor$, and using \cref{teardrop}, gives $\mergemor=0$, which, by \cref{bird}, implies that the category is trivial. Hence $a_1=a_2$. Applying $\Switch$ to \cref{door} gives 
	\[
	a_3\, \jail+a_2 \, \hourglass + a_1\, \crossmor =0. 
	\]
	Then an argument similar to the above yields $a_3=a_2$. Thus 
	\[
	\jail +\, \hourglass +\, \crossmor =0.
	\]
	Composing on the top with $\capmor$, and using \cref{chess}, gives $(2+\delta)\capmor=0$. Since  $2+\delta\neq 0$, it follows that $\capmor=0$, and so, by \cref{bird}, the quotient $\Tcat_{\delta,\alpha}/\cI$ is trivial.
\end{proof}
\begin{cor}\label{tamud}There  is no tensor ideal $\cI$ of $\Tcat_{\delta,\alpha}$ for which 
	\begin{equation} 
		\dim \left( \left(\Tcat_{\delta,\alpha}/\cI\right)(\go^{\otimes 2}, \go^{\otimes 2}) \right) = 1  \text{ or } 2.
	\end{equation}
\end{cor}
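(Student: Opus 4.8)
The plan is a one-line counting argument resting entirely on \cref{jhcmor}. First I would argue by contradiction: suppose a tensor ideal $\cI$ of $\Tcat_{\delta,\alpha}$ exists with
\[
V := \left(\Tcat_{\delta,\alpha}/\cI\right)(\go^{\otimes 2}, \go^{\otimes 2})
\]
of dimension $1$ or $2$. The images of the three morphisms $\jail$, $\hourglass$, $\crossmor$ all lie in $V$, and a vector space of dimension at most $2$ cannot contain three linearly independent vectors; hence $\jail$, $\hourglass$, $\crossmor$ are linearly dependent in $\Tcat_{\delta,\alpha}/\cI$.

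Next I would invoke \cref{jhcmor}: linear dependence of $\jail$, $\hourglass$, $\crossmor$ in $\Tcat_{\delta,\alpha}/\cI$ forces this quotient to be trivial. As recorded in \cref{bird}, in a trivial quotient one has $1_\go = 0$, so $\go$ — and therefore $\go^{\otimes 2}$ — is a zero object, whence $V = 0$. This contradicts $\dim V \in \{1,2\}$ and completes the proof.

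There is essentially no obstacle, since all the real content is already in \cref{jhcmor}. The only point to flag is that this inherits the standing hypothesis $\delta \neq -2$ (in force throughout \cref{QTC} and used inside the proof of \cref{jhcmor} at the step where $2+\delta \neq 0$); without it the conclusion can fail, as the $\delta=-2$ case relates to the Temperley--Lieb category where such small quotients do occur.
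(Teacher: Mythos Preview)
Your argument is correct and matches the paper's own proof, which simply says ``This follows from \cref{jhcmor}''; you have merely unpacked that one line by noting that three vectors in a space of dimension at most $2$ are dependent and that triviality forces $V=0$. Your remark about the standing hypothesis $\delta\neq -2$ is also accurate and consistent with the assumptions in force in that section.
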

\begin{proof}
	This follows from \cref{jhcmor}.
\end{proof}

	Recall the definition of the \emph{Temperley--Lieb category} $\TL(-2)$. This is the free $\kk$-linear rigid monoidal category generated by a self-dual object of dimension $-2$.  It is generated by a single object $\go$, and morphisms $\cupmor$ and $\capmor$  subject to the two first relations in \cref{vortex}. It is symmetric with braiding given by 
	\begin{equation}\label{fence}
		\crossmor := \jail +\hourglass
		\ .
	\end{equation}
The cups and caps make $\TL(-2)$ a strict pivotal category, and so we can define a twist:
	\begin{equation}\label{twist}
		\theta_\go :=
		\begin{tikzpicture}[anchorbase,scale=.6]
			\draw[-] (0,0.6) to (0,0.3);
			\draw (0.3,-0.2) to [out=0,in=-90](.5,0);
			\draw (0.5,0) to [out=90,in=0](.3,0.2);
			\draw (0,-0.3) to (0,-0.6);
			\draw (0,0.3) to [out=-90,in=180] (.3,-0.2);
			\draw (0.3,.2) to [out=180,in=90](0,-0.3);
			\draw (0.3,.2) to [out=180,in=90](0,-0.3);
		\end{tikzpicture}
		=
		- \
		\begin{tikzpicture}[centerzero,scale=.6]
			\draw (0,-0.6) -- (0,0.6);
		\end{tikzpicture}
		\ .
	\end{equation}
	This makes $\TL(-2)$ into a \emph{balanced} category. We have the following result.
\begin{rem}\label{delta}
	As we see from the proof of \cref{jhcmor}, the condition $\delta\neq -2$, which we assumed at the beginning of the section, guarantees that the morphisms $\jail\, ,\, \hourglass$ and $\crossmor$ are linearly independent. If $\delta=-2$, then there  is a functor $\TL(-2)\to \Tcat_{-2,\alpha}$, given on objects by sending the generating object $\go $ to $\go$ and on morphisms by 
 \[
 \capmor \mapsto \capmor\; , \; \cupmor \mapsto \cupmor \quad \text{and}\quad  \crossmor \mapsto -\crossmor.
 \]
 We believe that this functor is faithful, but we are not able to give a proof of that.
\end{rem}
%

Recall that the \emph{dimension of  a self-dual object} $X$ in a strict pivotal category $\cC$ is $\tr(1_X)\in\End_{\cC}(\one)$. The \emph{trace} $\tr(1_X)$ is obtained by closing the strand of $1_X$:
\[
\tr(1_X):=\begin{tikzpicture}[baseline=-6]
\draw (0,0.2) arc(90:-270:0.3);
\node at (-.42,0) {$\scriptstyle X$};
\end{tikzpicture}.
\]
For instance, the dimension of $\go$ in $\Tcat_{\delta,\alpha}$ is $\delta$, as is given in \cref{chess}.
\begin{prop}\label{sumer}
If $\cI$ is a tensor ideal of $\Tcat_{\delta,\alpha}$ such that 
\begin{equation*}
		 \dim \left( \left(\Tcat_{\delta,\alpha}/\cI\right)(\go^{\otimes 2}, \go^{\otimes 2}) \right) = 3, 
\end{equation*}
then the relations  
\begin{equation}\label{SL2}
	\dotcross = \Imor-\, \Hmor,\,\quad \Imor = \frac{\alpha}{2}\left(\jail -\, \crossmor\right)
\end{equation}  
hold in the quotient category $ \Tcat_{\delta,\alpha}/\cI $.  In particular $\go$ is a Lie algebra object of dimension $ 3 $ in $\Tcat_{\delta,\alpha}/\cI$.
\end{prop}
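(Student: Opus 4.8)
The plan is to exploit the dimension hypothesis to determine, via the operators $\Rot$ and $\Switch$ together with the defining relations of \cref{Tdef}, the images in the quotient $\cC := \Tcat_{\delta,\alpha}/\cI$ of the morphisms $\Imor$, $\Hmor$ and $\dotcross$. Since $\dim\cC(\go^{\otimes 2},\go^{\otimes 2}) = 3 > 0$, the quotient $\cC$ is nontrivial, so \cref{jhcmor} tells us that the images of $\jail$, $\hourglass$, $\crossmor$ are linearly independent; being three elements of a three-dimensional space, they form a basis of $\cC(\go^{\otimes 2},\go^{\otimes 2})$. In particular we may write $\Imor = x_1\,\jail + x_2\,\hourglass + x_3\,\crossmor$ in $\cC$ for some $x_i \in \kk$, and likewise for $\Hmor$ and $\dotcross$.

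First I would apply the (quotient-descended) operator $\Switch$. By \cref{flick} we have $\Switch(\Imor) = -\Imor$, while $\Switch$ acts on the basis by $\jail \leftrightarrow \crossmor$ and $\hourglass \mapsto \hourglass$; comparing coefficients forces $2x_2 = 0$ and $x_3 = -x_1$, so, as $\operatorname{char}\kk \ne 2$, we get $\Imor = x_1\,(\jail - \crossmor)$. To pin down $x_1$, compose on top with $\mergemor$: by the third relation of \cref{chess}, $\mergemor \circ \Imor = \mergemor \circ \splitmor \circ \mergemor = \alpha\,\mergemor$, whereas the skew-symmetry $\mergemor \circ \crossmor = -\,\mergemor$ (second relation of \cref{chess}) gives $\mergemor \circ \bigl(x_1(\jail-\crossmor)\bigr) = 2x_1\,\mergemor$. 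Hence $(2x_1 - \alpha)\,\mergemor = 0$ in $\cC$; since $\cC$ is nontrivial, $\mergemor \ne 0$ by \cref{bird}, and as $2x_1 - \alpha$ is a scalar it must vanish, i.e.\ $x_1 = \alpha/2$. This is the second relation of \cref{SL2}. Applying $\Rot$ and using $\Rot(\Imor) = \Hmor$, $\Rot(\jail) = \hourglass$, $\Rot(\crossmor) = \crossmor$ from \cref{rotary} gives $\Hmor = \tfrac{\alpha}{2}(\hourglass - \crossmor)$, and applying $\Switch$ to this identity (with $\Switch(\Hmor) = -\dotcross$ from \cref{flick}) gives $\dotcross = \tfrac{\alpha}{2}(\jail - \hourglass) = \Imor - \Hmor$, the first relation of \cref{SL2}, i.e.\ the Jacobi identity \cref{Jacob}. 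Together with the skew-symmetry of $\mergemor$ this exhibits $\go$ as a Lie algebra object in $\cC$. Finally, comparing $\tr(\Imor) = \tr(\splitmor \circ \mergemor) = \tr(\mergemor \circ \splitmor) = \alpha\,\delta$ (by cyclicity of the trace and the third relation of \cref{chess}) with $\tr(\Imor) = \tfrac{\alpha}{2}\bigl(\tr(\jail) - \tr(\crossmor)\bigr) = \tfrac{\alpha}{2}(\delta^2 - \delta)$ and using $\alpha \ne 0$ yields $\delta(\delta - 3) = 0$, so that $\go$ has dimension $3$.

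Essentially everything here is a formal manipulation with $\Rot$, $\Switch$ and the relations of \cref{Tdef}; the only non-routine point is the determination of the scalar $x_1 = \alpha/2$. That step is where the skew-symmetry of the trivalent vertex --- the feature distinguishing $\Tcat_{\delta,\alpha}$ from the category of \cite{GSZ} --- genuinely enters, and where one needs the fact from \cref{bird} that a nontrivial quotient cannot kill $\mergemor$; this is the part I expect to require the most care.
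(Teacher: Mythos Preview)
Your argument follows essentially the same route as the paper's proof: use \cref{jhcmor} to get the basis, apply $\Switch$ to force $\Imor = x_1(\jail-\crossmor)$, compose with $\mergemor$ to pin down $x_1=\alpha/2$, then apply $\Rot$ and $\Switch$ to obtain $\Hmor$ and $\dotcross$. All of this is correct.

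There is one gap at the end. Your trace computation yields $\alpha\delta = \tfrac{\alpha}{2}(\delta^2-\delta)$, i.e.\ $\delta(\delta-3)=0$, which does \emph{not} by itself exclude $\delta=0$ (the only standing assumption is $\delta\neq -2$). The paper instead composes the identity $\Hmor = \tfrac{\alpha}{2}(\hourglass - \crossmor)$, which you already derived, with $\capmor$ on top: the left-hand side gives $\alpha\,\capmor$ (the cap and the horizontal bar form a bigon, reduced via the third relation of \cref{chess}), while the right-hand side gives $\tfrac{\alpha}{2}(\delta-1)\,\capmor$. Since $\capmor\neq 0$ and $\alpha\neq 0$, this forces $\delta=3$ directly. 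Replacing your trace step with this one closes the gap.
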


\begin{proof}
By \cref{jhcmor}, the morphisms \cref{three} form a basis of the space $\Tcat_{\delta,\alpha}/\cI \left(\go^{\otimes 2}, \go^{\otimes 2}\right)$. Then we can write
\[
\Imor = a\; \crossmor + b\; \jail + c\;\hourglass,  
\]
for  $ a,b,c \in \kk$. Applying $\Switch$ to this we get 
\[
-\Imor = a\; \jail + b\; \crossmor + c\;\hourglass.
\]
If we sum the two equations and use the fact that \cref{three} are linearly independent, we get $a=-b$ and $c=0$. Hence 
\begin{equation}\label{Imomo}
\Imor = b\left(\jail -\crossmor\right).
\end{equation}
Composing with $\mergemor$, and using \cref{chess}, we get $b=\frac{\alpha}{2}$. This gives the second relation in \cref{SL2}. Applying the operator $\Rot$ to this gives 
\begin{equation}\label{Hmomo}
	\Hmor = \frac{\alpha}{2}\left(\hourglass -\crossmor\right).
\end{equation}
Composing with $\capmor$ on the top  gives $\delta=3$. Finally, applying $\Switch$ to \cref{Hmomo} and using \cref{turvy} yields
\[
\dotcross = \frac{\alpha}{2}\left(\jail -\hourglass\right).
\]
Combining this with \cref{Imomo,Hmomo} gives the first relation in \cref{SL2}.
\end{proof}
\begin{eg}
Let $ \mathpzc{SL} $ be the quotient category of $\Tcat_{3,1}$  by the relations \cref{SL2}. Let $\mathfrak{sl}_2(\kk)\md$ be the category of finite-dimensional representations of $\mathfrak{sl}_2(\kk)$.  Since 
\[
  \dim_{\kk}\End_{\fsl}(\fsl^{\otimes 2})=3, 
\]
it follows from \cref{magneto} and  \cref{sumer} that there is a functor 
\[
\Psi\colon\mathpzc{SL}\to \mathfrak{sl}_2(\kk)\md,
\]
which we will now describe explicitly. The Lie algebra $\mathfrak{sl}_2(\kk)$ has basis 
\[ 
e=\begin{pmatrix}
	0& 1\\ 0&0
\end{pmatrix},\quad f=\begin{pmatrix}
	0 & 0 \\ 1 &0
\end{pmatrix},\quad h=\begin{pmatrix}
	1&0 \\ 0& -1
\end{pmatrix}
,
\]
with Lie bracket $[e,f]=h,[h,e]=2e$ and $[h,f]=-2f$. Its Killing form is given by $\kappa(x,y)=4\tr(xy)$ for $x,y\in\mathfrak{sl}_2(\kk)$. So the dual basis of the above basis with respect to the Killing form is 
\[ 
e^{\vee}=\frac{1}{4}f,\quad f^{\vee} =\frac{1}{4}e,\quad h^{\vee}=\frac{1}{8}h
,
\]
and the Casimir element $\xi \in \mathfrak{sl}_2(\kk)^{\otimes 2}$ is 
\[
\xi = e\otimes e^\vee + f \otimes f^\vee + h\otimes h^\vee =\frac{1}{4}\left(e\otimes f + f \otimes e\right) + \frac{1}{8}h\otimes h.
\]
Then $\Psi$ can be defined  by $\Psi(\go)=\fsl$, and     \begin{gather*}
	\Psi(\mergemor)(x\otimes y)=[x,y],\quad  \Psi(\crossmor)(x\otimes y)=y\otimes x,\quad  \Psi(\capmor)(x\otimes y)=4\tr(xy),\\ \Psi(\cupmor)(1)=\xi,\quad  \Psi(\splitmor)([x,y])=\frac{1}{2}\left( x\otimes y - y \otimes x\right),
\end{gather*}   
for all $x,y\in \fsl$.
\end{eg}


Define an idempotent $e\in \TL(-2)(\go^{\otimes 2}, \go^{\otimes 2})$  by 
\begin{equation}\label{idemp}
e=\begin{tikzpicture}[centerzero]
	\draw (-.15,-.4) to (-.15,.4);
	\draw (.15,-.4) to (.15, .4);
	\symbox{-0.3,-0.15}{0.3,0.15};
\end{tikzpicture}:=\frac{1}{2} \left(\jail+\crossmor\right)=\jail + \frac{1}{2}\, \hourglass.
\end{equation}

We denote the additive Karoubi envelope of a category $\cC$ by   $\Kar(\cC)$.  Objects of $\Kar(\cC)$ are pairs $(X,e)$, where $X$ is an object in the additive envelope $\Add(\cC)$ of $\cC$, and $e \in \cC(X,X)$ is an idempotent endomorphism.  Morphisms in $\Kar(\cC)$ are given by
\[
\Kar(\cC) \big( (X,e),(X',e') \big) := e' \Add(\cC)(X,X') e.
\]
Composition is as in $\cC$. If $\cC$ is monoidal, then so is $\Kar\cC$ with a tensor product given by $(X,e)\otimes (X',e'):= (X\otimes X',e\otimes e')$. Diagrammatically, a morphism $f\colon (X,e)\to (X',e')$ is depicted by
\[
\begin{tikzpicture}[centerzero] 
	\draw[thick] (.15,-.9) to (.15, .9);
	\symbox{-.05,-0.15}{0.35,0.15};
	\symbox{-.05,0.3}{0.35,0.6};
	\symbox{-.05,-0.6}{0.35,-0.3};
	\node at (.15,-.45) {$\scriptstyle e$};
	\node at (.15,.45) {$\scriptstyle e'$};
	\node at (.15,0) {$\scriptstyle f$};
	\node at (-.15,.75) {$\scriptstyle X'$};
	\node at (-.15,-.75) {$\scriptstyle X$};
\end{tikzpicture}.
\]
Next, we relate the quotient category $\mathpzc{SL}$ to the Temperley--Lieb category.
\begin{prop}\label{Kauff}Assume $\kk$ contains a square root of $-1$. There is a unique  $\kk$-linear monoidal functor $\Phi\colon \mathpzc{SL}\to \Kar{\TL}(-2)$ given on objects by sending $\go$ to the object $(\go^{\otimes 2}, e)$, and on morphisms by
\[
\capmor \mapsto \begin{tikzpicture}[baseline=-14]
	\draw[out=90, in=90, looseness=1.5] (-.45, -.4) to (.45,-.4);
	\draw[out=90,in=90,looseness=2] (-.15,-.4) to (.15, -.4);
	\draw (-.45, -.4) to (-.45,-.7);
	\draw (.45, -.4) to (.45,-.7);
	\draw (-.15, -.4) to (-.15,-.7);
	\draw (.15, -.4) to (.15,-.7);
	\symbox{.05,-0.6}{0.55,-.4};
	\symbox{-0.55,-0.6}{-.05,-.4};
\end{tikzpicture}\; \; , \qquad	\mergemor \mapsto \sqrt{-1} \begin{tikzpicture}[baseline=-6]
		\draw[out=90, in=-90, looseness=1] (-.45, -.4) to (-.15,.2);
		\draw[out=90,in=-90,looseness=1] (.45,-.4) to (.15, .2);
		\draw[out=90,in=90,looseness=2] (-.15,-.4) to (.15, -.4);
		\draw (-.45, -.4) to (-.45,-.7);
		\draw (.45, -.4) to (.45,-.7);
		\draw (-.15, -.4) to (-.15,-.7);
		\draw (.15, -.4) to (.15,-.7);
		\draw (-.15, .2) to (-.15,.5);
		\draw (.15, .2) to (.15,.5);
		\symbox{-0.25,0.2}{0.25,0.4};
		\symbox{.05,-0.6}{0.55,-.4};
		\symbox{-0.55,-0.6}{-.05,-.4};
	\end{tikzpicture}\;, \qquad \crossmor \mapsto \begin{tikzpicture}[baseline=-6,scale=.8]
	\draw[out=90,in=-90]  (-.6,-.6) to (.3,.6);
	\draw[out=90,in=-90]  (-.3,-.6) to (.6,.6);
	\draw[out=90,in=-90]  (.6,-.6) to (-.3,.6);
	\draw[out=90,in=-90]  (.3,-.6) to (-.6,.6);
	\draw (-.6,-.9) to (-.6,-.8);
	\draw (-.3,-.9) to (-.3,-.8);
	\draw (.6,-.9) to (.6,-.8);
	\draw (.3,-.9) to (.3,-.8);
	\symbox{-.7,-.6}{-.2,-.8};
	\symbox{.7,-.6}{.2,-.8};
\end{tikzpicture}
\ .
\]	
\end{prop}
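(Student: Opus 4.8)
The plan is to construct $\Phi$ as a $\kk$-linear monoidal functor on the presented category $\Tcat_{3,1}$ that annihilates the extra relations \cref{SL2}, and then to record uniqueness. Uniqueness is immediate: a monoidal functor on $\mathpzc{SL}$ is determined by its values on the object $\go$ and on the generating morphisms $\mergemor,\crossmor,\cupmor,\capmor$, three of which are prescribed, and the value on $\cupmor$ is then forced because a two-sided duality datum is determined by its evaluation (here through the first two relations of \cref{vortex}). For existence one first checks that each prescribed morphism really lies in the stated hom-space of $\Kar\TL(-2)$: every defining picture already carries the idempotent $e$ of \cref{idemp} at each pair of strands abutting a copy of $(\go^{\otimes 2},e)$, so $e^2=e$ supplies the required absorption of the source and target idempotents, and $\Phi(\cupmor)$ is taken to be the cup-shaped mirror of $\Phi(\capmor)$.

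The verifications rest on a handful of identities for $e$ in $\TL(-2)$: from $\crossmor=\jail+\hourglass$ and the circle value $\delta_{\TL}=-2$ one gets $e=\jail+\tfrac12\hourglass$, whence $e^2=e$, $\crossmor\circ e=e\circ\crossmor=e$, $e\circ\hourglass=\hourglass\circ e=0$, and $\tr_{\TL(-2)}(e)=\tfrac12\bigl((-2)^2+(-2)\bigr)=3$. Since $\crossmor\circ e=e$, the symmetric braiding of $\TL(-2)$ acts trivially on $(\go^{\otimes 2},e)$, so $\Phi(\capmor),\Phi(\cupmor)$ are nothing but the evaluation and coevaluation of $(\go^{\otimes 2},e)$ viewed as a symmetrically self-dual object of the strict pivotal, symmetric category $\Kar\TL(-2)$, and $\Phi(\crossmor)$ is just its braiding. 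Hence the cup/cap and cyclicity relations of \cref{vortex}, the Reidemeister-type relations of \cref{venom}, the relation making the form symmetric in \cref{chess}, and the compatibility of the trivalent vertex with crossings in \cref{venom} all hold formally (the last ones are instances of naturality of the braiding applied to the morphism $\Phi(\mergemor)$, whose two sides each carry one factor $\sqrt{-1}$, which cancels); and $\tr_{\TL(-2)}(e)=3$ is exactly the bubble relation of \cref{chess} with $\delta=3$. What remains are the two relations of \cref{chess} involving the trivalent vertex: the skew-symmetry $\mergemor\circ\crossmor=-\mergemor$ and the bigon relation $\mergemor\circ\splitmor=\alpha\,1_\go$. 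Writing $\Phi(\mergemor)=\sqrt{-1}\,\mu$ with $\mu=e\circ(1_\go\otimes\capmor\otimes 1_\go)\circ(e\otimes e)$ crossing-free and $\Phi(\splitmor)=\sqrt{-1}\,\nu$ for the corresponding rotation $\nu$, one resolves the remaining crossings via $\crossmor=\jail+\hourglass$ and uses $e\circ\hourglass=\hourglass\circ e=0$ to kill the bad terms: for the skew-symmetry this gives $\mu\circ\Phi(\crossmor)=-\mu$, and for the bigon one finds $\mu\circ\nu=-e$, so that $\Phi(\mergemor)\circ\Phi(\splitmor)=(\sqrt{-1})^2(-e)=e=1_{(\go^{\otimes 2},e)}=\Phi(\alpha\,1_\go)$ because $\alpha=1$ in $\Tcat_{3,1}$. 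This is exactly where the factor $\sqrt{-1}$ is needed.

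It then remains to check that $\Phi$ descends to $\mathpzc{SL}$, i.e.\ that it sends the relations \cref{SL2} to identities. The direct route is to expand $\Phi(\Imor)=\Phi(\splitmor)\circ\Phi(\mergemor)$, $\Phi(\Hmor)=\Rot(\Phi(\Imor))$, the morphism $\Phi(\dotcross)$ (determined via \cref{rotary}--\cref{flick}), and $\Phi(\jail)=1_{(\go^{\otimes 2},e)^{\otimes 2}}$, $\Phi(\crossmor)$ as Temperley--Lieb morphisms conjugated by $e\otimes e$, and to verify the two equalities of \cref{SL2} by hand; this is a bounded computation in $\End_{\Kar\TL(-2)}\bigl((\go^{\otimes 2},e)^{\otimes 2}\bigr)$, which I would also run through SageMath as a check. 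Alternatively one argues indirectly: once a finite $\TL$-computation (using $e\circ\hourglass=0$ to cut down the $14$-dimensional algebra $\End_{\TL(-2)}(\go^{\otimes 4})$) shows $\dim\End_{\Kar\TL(-2)}\bigl((\go^{\otimes 2},e)^{\otimes 2}\bigr)=3$, set $\cI=\ker\Phi$; then $\Tcat_{3,1}/\cI$ embeds in $\Kar\TL(-2)$, so $\dim(\Tcat_{3,1}/\cI)(\go^{\otimes 2},\go^{\otimes 2})\le 3$, hence $=3$ by \cref{tamud}, hence \cref{SL2} holds in $\Tcat_{3,1}/\cI$ by \cref{sumer}, so $\cI$ contains the defining tensor ideal of $\mathpzc{SL}$ and $\Phi$ factors through it. The main obstacle throughout is the sign- and scalar-bookkeeping in the trivalent-vertex relations: every crossing appearing in $\Phi(\mergemor)$ or $\Phi(\crossmor)$ must be resolved against $e$, and the $-2$'s from closed loops, the $\tfrac12$'s from $e=\jail+\tfrac12\hourglass$ and the $\sqrt{-1}$'s from the trivalent vertex have to combine consistently — the bigon relation being the single point where a sign slip would genuinely break the functor rather than merely mislabel a diagram.
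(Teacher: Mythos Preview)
Your proposal is correct and follows the same overall scheme as the paper: verify the defining relations of $\Tcat_{3,1}$ and then the extra relations \cref{SL2}. The execution differs in two respects worth noting.

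First, you are more conceptual about the ``easy'' relations. You observe that $\Phi(\capmor),\Phi(\cupmor)$ are exactly the canonical (co)evaluation of $(\go^{\otimes 2},e)$ in the pivotal category $\Kar\TL(-2)$, and that $\Phi(\crossmor)$ is literally the symmetric braiding of that category on $(\go^{\otimes 2},e)^{\otimes 2}$; hence all of \cref{vortex}, \cref{venom}, and the first relation of \cref{chess} hold formally. The paper instead verifies each of these by explicit diagram manipulation. Your route is cleaner, though the sentence ``the symmetric braiding acts trivially on $(\go^{\otimes 2},e)$'' is a bit loose: what you actually use is $\crossmor\circ e=e$ on a \emph{single} copy (giving symmetry of the form and triviality of the twist), which is distinct from the braiding between two copies.

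Second, for the \cref{SL2} relations you offer an indirect argument via \cref{sumer}: compute $\dim\End_{\Kar\TL(-2)}\bigl((\go^{\otimes 2},e)^{\otimes 2}\bigr)=3$, take $\cI=\ker\Phi$, and invoke \cref{tamud} and \cref{sumer}. This is a genuine shortcut the paper does not take; the paper instead verifies both relations of \cref{SL2} by direct expansion, culminating in \cref{pharaon}. Your argument is valid (nontriviality of the quotient follows from $e\ne 0$, so the dimension is at least $3$, hence exactly $3$), but note that the dimension computation, while finite, is not entirely trivial and you leave it as an exercise. For the skew-symmetry and bigon relations in \cref{chess} your method (expand crossings, kill terms via $e\circ\hourglass=0$) is precisely what the paper does; your claim $\mu\circ\nu=-e$ is correct and is where the factor $\sqrt{-1}$ earns its keep, exactly as you say.
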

\begin{proof}
	We need to check that $\Phi$ preserves the defining relations in $\mathpzc{SL}$. The verification for the set of relations in \cref{vortex,venom} is straightforward and follows from the fact that $\TL(-2)$ is pivotal. For the first relation in \cref{chess}, note that 
\begin{equation}\label{mushroom}
		\Phi(\capmor) = \begin{tikzpicture}[centerzero]
		\draw[out=90,in=90,looseness=2] (-.15,-.15) to (.15,-.15);
		\draw[out=90,in=90,looseness=2] (-.3,-.15) to (.3,-.15);
	\end{tikzpicture}\, +\frac{1}{2}\; \capmor\,\capmor.
\end{equation}
It follows (in all the following arguments we will freely use the fact that $\TL(-2)$ is symmetric without mentioning it)
\begin{equation}\label{scarf}
		\Phi\left(\begin{tikzpicture}[anchorbase]
			\draw (-0.15,-0.4) to[out=45,in=down] (0.15,0) arc(0:180:0.15) to[out=down,in=135] (0.15,-0.4);
		\end{tikzpicture}\right)  = \begin{tikzpicture}[centerzero]
		\draw[out=90,in=90,looseness=2] (-.15,0) to (.15,0);
		\draw[out=90,in=90,looseness=2] (-.3,0) to (.3,0);
		\draw[out=-90,in=45] (.15, 0) to (-.3,-.4);
		\draw[out=-90,in=135] (-.15, 0) to (.3,-.4);
		\draw[out=-90,in=45] (.3, 0) to (-.15,-.4);
		\draw[out=-90,in=135] (-.3, 0) to (.15,-.4);
	\end{tikzpicture} 
+\frac{1}{2}\; 
\begin{tikzpicture}[baseline=-8]
	\draw[out=90,in=90, looseness=2] (-.3,0) to (-.05,0);
	\draw[out=90,in=90, looseness=2] (.05,0) to (.3,0);
	\draw[out=-90,in=45]  (.3,0) to (-.3,-.6);
	\draw[out=-90,in=45] (.05,0) to (-.4,-.4);
	\draw[out=-90,in=135] (-.3,0) to (0.1,-.6);
	\draw[out=-90,in=135] (-.05,0) to (.2,-.4);
	\draw[out=90,in=-135] (-.5,-.6) to (-.4,-.4);
	\draw[out=90,in=-45]  (.4,-.6) to (.2,-.4);
\end{tikzpicture}
\overset{\cref{twist}}{=}
- \begin{tikzpicture}[centerzero]
	\draw[out=90,in=90,looseness=2] (-.3,0) to (.3,0);
	\draw[out=90,in=90,looseness=2] (-.3, -.4) to (.3,-.4);
	\draw[out=-90,in=45] (.3, 0) to (-.15,-.4);
	\draw[out=-90,in=135] (-.3, 0) to (.15,-.4);
\end{tikzpicture} + \frac{1}{2}\, \capmor \, \capmor 
\overset{\cref{twist}}{=} \begin{tikzpicture}[centerzero]
	\draw[out=90,in=90,looseness=2] (-.15,-.15) to (.15,-.15);
	\draw[out=90,in=90,looseness=2] (-.3,-.15) to (.3,-.15);
\end{tikzpicture}\, +\frac{1}{2}\; \capmor\,\capmor  
= \Phi(\capmor).
	\end{equation}
For the second relation in \cref{chess}, notice that we can expand $\Phi(\mergemor)$ in the form
\begin{multline*}
\frac{1}{\sqrt{-1}}\Phi\left(\mergemor\right) 
 = 
\begin{tikzpicture}[centerzero]
\draw (0,-.3) to (0,.3);
\draw (.6,-.3) to (.6,.3);
\draw[out=90,in=90,looseness=2] (.15,-.3) to (.45,-.3);
\end{tikzpicture} 
+
 \frac{1}{2}\; \begin{tikzpicture}[centerzero]
\draw (0,-.3) to (0,.3);
\draw (-.2,-.3) to (-.2,.3);
\draw[out=90,in=90,looseness=2] (.15,-.3) to (.45,-.3);
\end{tikzpicture}
+
 \frac{1}{2}\; \begin{tikzpicture}[centerzero]
\draw (.8,-.3) to (.8,.3);
\draw (.6,-.3) to (.6,.3);
\draw[out=90,in=90,looseness=2] (.15,-.3) to (.45,-.3);
\end{tikzpicture}
+
\frac{1}{2}\; \begin{tikzpicture}[centerzero]
	\draw[out=90,in=90,looseness=2] (-.15,-.3) to (.15,-.3);
	\draw[out=90,in=90,looseness=2] (-.3,-.3) to (.3,-.3);
	\draw[out=-90,in=-90,looseness=2] (-.15,.3) to (.15,0.3);
\end{tikzpicture}
+
\frac{1}{2}\; \begin{tikzpicture}[centerzero]
	\draw[out=90,in=90,looseness=2] (.15,-.3) to (.45,-.3);
	\draw[out=90,in=90,looseness=2] (-.3,-.3) to (.0,-.3);
	\draw[out=-90,in=-90,looseness=2] (-.15,.3) to (.15,0.3);
\end{tikzpicture}\\
 \overset{\cref{mushroom}}{=} \begin{tikzpicture}[centerzero]
	\draw (0,-.3) to (0,.3);
	\draw (.6,-.3) to (.6,.3);
	\draw[out=90,in=90,looseness=2] (.15,-.3) to (.45,-.3);
\end{tikzpicture} 
+
\frac{1}{2}\; \begin{tikzpicture}[centerzero]
	\draw (0,-.3) to (0,.3);
	\draw (-.2,-.3) to (-.2,.3);
	\draw[out=90,in=90,looseness=2] (.15,-.3) to (.45,-.3);
\end{tikzpicture}
+
\frac{1}{2}\; \begin{tikzpicture}[centerzero]
	\draw (.8,-.3) to (.8,.3);
	\draw (.6,-.3) to (.6,.3);
	\draw[out=90,in=90,looseness=2] (.15,-.3) to (.45,-.3);
\end{tikzpicture}
+ \frac{1}{4} \; \; \begin{tikzpicture}[centerzero]
	\draw[out=90,in=90,looseness=2] (.15,-.3) to (.45,-.3);
	\draw[out=90,in=90,looseness=2] (-.3,-.3) to (.0,-.3);
	\draw[out=-90,in=-90,looseness=2] (-.15,.3) to (.15,0.3);
\end{tikzpicture}
+ \frac{1}{2}\cupmor\circ\Phi\left(\capmor\right).
\end{multline*}
Hence 
\begin{multline*}
	\frac{1}{\sqrt{-1}}\Phi\left(\begin{tikzpicture}[anchorbase]
		\draw (-0.2,-0.5) to[out=45,in=down] (0.15,-0.2) to[out=up,in=-45] (0,0) -- (0,0.2);
		\draw (0.2,-0.5) to [out=135,in=down] (-0.15,-0.2) to[out=up,in=-135] (0,0);
	\end{tikzpicture}\right)  \overset{\cref{scarf}}{=}\begin{tikzpicture}[centerzero]
	\draw[out=90,in=90,looseness=2] (-.15,0) to (.15,0);
	\draw  (-.3,0) to (-.3,.3);
	\draw  (.3,0) to (.3,.3);
	\draw[out=-90,in=45] (.15, 0) to (-.3,-.4);
	\draw[out=-90,in=135] (-.15, 0) to (.3,-.4);
	\draw[out=-90,in=45] (.3, 0) to (-.15,-.4);
	\draw[out=-90,in=135] (-.3, 0) to (.15,-.4);
\end{tikzpicture} +
\frac{1}{2}\; \begin{tikzpicture}[centerzero]
\draw (0,-.3) to (0,.3);
\draw (-.2,-.3) to (-.2,.3);
\draw[out=90,in=90,looseness=2] (.15,-.3) to (.45,-.3);
\end{tikzpicture}
+
\frac{1}{2}\; \begin{tikzpicture}[centerzero]
\draw (.8,-.3) to (.8,.3);
\draw (.6,-.3) to (.6,.3);
\draw[out=90,in=90,looseness=2] (.15,-.3) to (.45,-.3);
\end{tikzpicture}
+ \frac{1}{4} \; \; \begin{tikzpicture}[centerzero]
\draw[out=90,in=90,looseness=2] (.15,-.3) to (.45,-.3);
\draw[out=90,in=90,looseness=2] (-.3,-.3) to (.0,-.3);
\draw[out=-90,in=-90,looseness=2] (-.15,.3) to (.15,0.3);
\end{tikzpicture}
+ 
\frac{1}{2}\cupmor\circ\Phi\left(\capmor\right) \\
\overset{\cref{twist}}{\underset{\cref{mushroom}}{=}} 
- \begin{tikzpicture}[centerzero]
	\draw (-.3,0) to (-.3,.2);
	\draw (.3,0) to (.3,.2);
	\draw[out=90,in=90,looseness=2] (-.3, -.4) to (.3,-.4);
	\draw[out=-90,in=45] (.3, 0) to (-.15,-.4);
	\draw[out=-90,in=135] (-.3, 0) to (.15,-.4);
\end{tikzpicture}
+
\frac{1}{2}\; \begin{tikzpicture}[centerzero]
	\draw (0,-.3) to (0,.3);
	\draw (-.2,-.3) to (-.2,.3);
	\draw[out=90,in=90,looseness=2] (.15,-.3) to (.45,-.3);
\end{tikzpicture}
+
\frac{1}{2}\; \begin{tikzpicture}[centerzero]
	\draw (.8,-.3) to (.8,.3);
	\draw (.6,-.3) to (.6,.3);
	\draw[out=90,in=90,looseness=2] (.15,-.3) to (.45,-.3);
\end{tikzpicture}
+ 
\frac{1}{2}\; \begin{tikzpicture}[centerzero]
	\draw[out=90,in=90,looseness=2] (-.15,-.3) to (.15,-.3);
	\draw[out=90,in=90,looseness=2] (-.3,-.3) to (.3,-.3);
	\draw[out=-90,in=-90,looseness=2] (-.15,.3) to (.15,0.3);
\end{tikzpicture}
+
\frac{1}{2}\; \begin{tikzpicture}[centerzero]
	\draw[out=90,in=90,looseness=2] (.15,-.3) to (.45,-.3);
	\draw[out=90,in=90,looseness=2] (-.3,-.3) to (.0,-.3);
	\draw[out=-90,in=-90,looseness=2] (-.15,.3) to (.15,0.3);
\end{tikzpicture} 
 \overset{\cref{fence}}{=}- \frac{1}{\sqrt{-1}}\Phi\left(\mergemor\right).
\end{multline*}
In the last step, we only resolve the crossing using its definition in \cref{fence} and then compare the resulting sum to $\Phi(\mergemor)$. To show  the third relation in \cref{chess}, we first note that
\begin{equation}\label{proj}
	\begin{tikzpicture}[baseline=-17]
	\draw[out=90,in=90,looseness=2] (-.15,-.4) to (.15, -.4);
	\symbox{-0.25,-.4}{0.25,-.6};
	\draw (-0.15,-.6) to (-0.15,-.7);
	\draw (0.15,-.6) to (0.15,-.7);
\end{tikzpicture}=0 \quad \text{and} \quad \scalebox{1}[-1]{\begin{tikzpicture}[baseline=-12]
\draw[out=90,in=90,looseness=2] (-.15,-.4) to (.15, -.4);
\symbox{-0.25,-.4}{0.25,-.6};
\draw (-0.15,-.6) to (-0.15,-.7);
\draw (0.15,-.6) to (0.15,-.7);
\end{tikzpicture}}=0.
\end{equation}
Then we have 
\begin{equation*}
- \Phi\left(\begin{tikzpicture}[centerzero]
	\draw  (0,-0.4) -- (0,-0.2) to[out=45,in=down] (0.15,0) to[out=up,in=-45] (0,0.2) -- (0,0.4);
	\draw (0,-0.2) to[out=135,in=down] (-0.15,0) to[out=up,in=-135] (0,0.2);
\end{tikzpicture}\right)
  =  \begin{tikzpicture}[baseline=-17]
 	\draw[out=90, in=-90, looseness=1] (-.45, -.4) to (-.15,.2);
 	\draw[out=90,in=-90,looseness=1] (.45,-.4) to (.15, .2);
 	\draw[out=90,in=90,looseness=2] (-.15,-.4) to (.15, -.4);
 	\draw (-.15, .2) to (-.15,.5);
 	\draw (.15, .2) to (.15,.5);
 	\draw[out=-90, in=90, looseness=1] (-.45, -.6) to (-.15,-1.2);
 	\draw[out=-90,in=90,looseness=1] (.45,-.6) to (.15, -1.2);
 	\draw[out=-90,in=-90,looseness=2] (-.15,-.6) to (.15, -.6);
 	\symbox{-0.25,0.2}{0.25,0.4};
 	\symbox{.05,-0.6}{0.55,-.4};
 	\symbox{-0.55,-0.6}{-.05,-.4};
 	\symbox{-0.25,-1.2}{0.25,-1.4};
 	\draw (-0.15,-1.4) to (-0.15,-1.5);
 	\draw (0.15,-1.4) to (0.15,-1.5);
 \end{tikzpicture}
\overset{\cref{idemp}}{=}
  \begin{tikzpicture}[baseline=-14]
	\draw[out=90, in=-90, looseness=1] (-.45, -.4) to (-.15,.2);
	\draw[out=90,in=-90,looseness=1] (.45,-.4) to (.15, .2);
	\draw[out=90,in=90,looseness=2] (-.15,-.4) to (.15, -.4);
	\draw (-.15, .2) to (-.15,.5);
	\draw (.15, .2) to (.15,.5);
	\draw[out=-90, in=90, looseness=1] (-.45, -.4) to (-.15,-1);
	\draw[out=-90,in=90,looseness=1] (.45,-.4) to (.15, -1);
	\draw[out=-90,in=-90,looseness=2] (-.15,-.4) to (.15, -.4);
	\symbox{-0.25,0.2}{0.25,0.4};
	\symbox{-0.25,-1}{0.25,-1.2};
	\draw (-0.15,-1.2) to (-0.15,-1.3);
	\draw (0.15,-1.2) to (0.15,-1.3);
\end{tikzpicture}+\; \frac{1}{2} \begin{tikzpicture}[centerzero]
\draw (-.15,-.4) to (-.15,.4);
\draw (.15,-.4) to (.15, .4);
\symbox{-0.3,-0.15}{0.3,0.15};
\end{tikzpicture} +\frac{1}{2}\; \begin{tikzpicture}[centerzero]
\draw (-.15,-.4) to (-.15,.4);
\draw (.15,-.4) to (.15, .4);
\symbox{-0.3,-0.15}{0.3,0.15};
\end{tikzpicture} + \frac{1}{4}\;   \begin{tikzpicture}[baseline=-6]
\draw[out=-90, in=-90, looseness=2] (.15, .2) to (-.15,.2);
\draw (-.15, .2) to (-.15,.5);
\draw (.15, .2) to (.15,.5);
\draw[out=90,in=90,looseness=2] (-.15,-.4) to (.15, -.4);
\symbox{-0.25,0.2}{0.25,0.4};
\symbox{-0.25,-.4}{0.25,-.6};
\draw (-0.15,-.6) to (-0.15,-.7);
\draw (0.15,-.6) to (0.15,-.7);
\end{tikzpicture}= -\; \begin{tikzpicture}[centerzero]
\draw (-.15,-.4) to (-.15,.4);
\draw (.15,-.4) to (.15, .4);
\symbox{-0.3,-0.15}{0.3,0.15};
\end{tikzpicture}.\\
\end{equation*} 
For the last relation in \cref{chess}, we  have
\[
\Phi\left(\bubble\right) =  \begin{tikzpicture}[baseline=-17]
	\draw[out=90, in=90, looseness=1.5] (-.45, -.4) to (.45,-.4);
	\draw[out=90,in=90,looseness=2] (-.15,-.4) to (.15, -.4);
	\draw[out=-90,in=-90,looseness=1.5] (-.45, -.6) to (.45,-.6);
	\draw[out=-90,in=-90,looseness=2] (-.15, -.6) to (.15,-.6);
	\symbox{.05,-0.6}{0.55,-.4};
	\symbox{-0.55,-0.6}{-.05,-.4};
\end{tikzpicture} = \begin{tikzpicture}[baseline=-17]
\draw[out=90, in=90, looseness=1.5] (-.45, -.4) to (.45,-.4);
\draw[out=90,in=90,looseness=2] (-.15,-.4) to (.15, -.4);
\draw[out=-90,in=-90,looseness=1.5] (-.45, -.6) to (.45,-.6);
\draw[out=-90,in=-90,looseness=2] (-.15, -.6) to (.15,-.6);
\symbox{.05,-0.6}{0.55,-.4};
\draw (-.45, -.6) to (-.45,-.4);
\draw (-.15, -.6) to (-.15,-.4);
\end{tikzpicture} \overset{\cref{idemp}}{=}\begin{tikzpicture}[baseline=-17]
\draw[out=90, in=90, looseness=1.5] (-.45, -.4) to (.45,-.4);
\draw[out=90,in=90,looseness=2] (-.15,-.4) to (.15, -.4);
\draw[out=-90,in=-90,looseness=1.5] (-.45, -.6) to (.45,-.6);
\draw[out=-90,in=-90,looseness=2] (-.15, -.6) to (.15,-.6);
\draw (.45, -.6) to (.45,-.4);
\draw (.15, -.6) to (.15,-.4);
\draw (-.45, -.6) to (-.45,-.4);
\draw (-.15, -.6) to (-.15,-.4);
\end{tikzpicture}+\frac{1}{2}\;\begin{tikzpicture}[baseline=-17]
\draw[out=90, in=90, looseness=1.5] (-.45, -.35) to (.45,-.35);
\draw[out=90,in=90,looseness=2] (-.15,-.35) to (.15, -.35);
\draw[out=-90,in=-90,looseness=1.5] (-.45, -.65) to (.45,-.65);
\draw[out=-90,in=-90,looseness=2] (-.15, -.65) to (.15,-.65);
\draw[out=90,in=90] (.45, -.65) to (.15,-.65);
\draw[out=-90,in=-90] (.45, -.35) to (.15,-.35);
\draw (-.45, -.65) to (-.45,-.35);
\draw (-.15, -.65) to (-.15,-.35);
\end{tikzpicture}= \bubble\,\bubble +\frac{1}{2}\bubble=3\times 1_\one.
\]

Next, we verify the second relation in \cref{SL2}.  First we expand the double crossing  in $\TL(-2)$ using \cref{fence}
\begin{multline*}
	\begin{tikzpicture}[centerzero,scale=.6]
		\draw[out=90,in=-90]  (-.6,-.6) to (.3,.6);
		\draw[out=90,in=-90]  (-.3,-.6) to (.6,.6);
		\draw[out=90,in=-90]  (.6,-.6) to (-.3,.6);
		\draw[out=90,in=-90]  (.3,-.6) to (-.6,.6);
	\end{tikzpicture} = \begin{tikzpicture}[centerzero]
	\draw (0,-.3) to (0,.3);
	\draw (.6,-.3) to (.6,.3);
	\draw (.2,-.3) to (0.2,.3);
	\draw (.4,-.3) to (.4,.3);
\end{tikzpicture} + 2\, \begin{tikzpicture}[centerzero]
	\draw (0,-.3) to (0,.3);
	\draw (.6,-.3) to (.6,.3);
	\draw[out=90,in=90,looseness=2] (.15,-.3) to (.45,-.3);\draw[out=-90,in=-90,looseness=2] (.15,.3) to (.45,.3);
\end{tikzpicture} 
+
\begin{tikzpicture}[centerzero]
\draw (0,-.3) to (0,.3);
\draw (-.2,-.3) to (-.2,.3);
\draw[out=90,in=90,looseness=2] (.15,-.3) to (.45,-.3);
\draw[out=-90,in=-90,looseness=2] (.15,.3) to (.45,.3);
\end{tikzpicture}
+
\begin{tikzpicture}[centerzero]
\draw (.8,-.3) to (.8,.3);
\draw (.6,-.3) to (.6,.3);
\draw[out=90,in=90,looseness=2] (.15,-.3) to (.45,-.3);
\draw[out=-90,in=-90,looseness=2] (.15,.3) to (.45,.3);
\end{tikzpicture}
+
\begin{tikzpicture}[centerzero]
\draw[out=90,in=90,looseness=1.5] (-.15,-.3) to (.15,-.3);
\draw[out=90,in=90,looseness=1.5] (-.3,-.3) to (.3,-.3);
\draw[out=-90,in=-90,looseness=1.5] (-.15,.3) to (.15,0.3);
\draw[out=-90,in=-90,looseness=1.5] (-.3,.3) to (.3,.3);
\end{tikzpicture}
+
\begin{tikzpicture}[centerzero]
\draw[out=90,in=90,looseness=2] (.15,-.3) to (.45,-.3);
\draw[out=90,in=90,looseness=2] (-.3,-.3) to (.0,-.3);
\draw[out=-90,in=-90,looseness=1.5] (-.15,.3) to (.15,0.3);
\draw[out=-90,in=-90,looseness=1.5] (-.3,.3) to (.3,0.3);
\end{tikzpicture}
+
\begin{tikzpicture}[centerzero]
	\draw[out=90,in=90,looseness=2] (.15,-.3) to (.45,-.3);
	\draw[out=90,in=90,looseness=2] (-.3,-.3) to (.0,-.3);
	\draw[out=-90,in=-90,looseness=2] (.15,.3) to (.45,.3);
	\draw[out=-90,in=-90,looseness=2] (-.3,.3) to (.0,.3);
\end{tikzpicture}
+
\scalebox{1}[-1]{\begin{tikzpicture}[baseline=2]
	\draw[out=90,in=90,looseness=2] (.15,-.3) to (.45,-.3);
	\draw[out=90,in=90,looseness=2] (-.3,-.3) to (.0,-.3);
	\draw[out=-90,in=-90,looseness=1.5] (-.15,.3) to (.15,0.3);
	\draw[out=-90,in=-90,looseness=1.5] (-.3,.3) to (.3,0.3);
\end{tikzpicture}}
+
\begin{tikzpicture}[centerzero]
	\draw[out=90,in=-90] (0,-.3) to (0.45,.3);
	\draw (-.2,-.3) to (-.2,.3);
	\draw[out=90,in=90,looseness=2] (.15,-.3) to (.45,-.3);
	\draw[out=-90,in=-90,looseness=2] (0,.3) to (.3,.3);
\end{tikzpicture}
+
\scalebox{1}[-1]{
\begin{tikzpicture}[baseline=2]
	\draw[out=90,in=-90] (0,-.3) to (0.45,.3);
	\draw (-.2,-.3) to (-.2,.3);
	\draw[out=90,in=90,looseness=2] (.15,-.3) to (.45,-.3);
	\draw[out=-90,in=-90,looseness=2] (0,.3) to (.3,.3);
\end{tikzpicture}
}
+\scalebox{-1}[1]{
	\begin{tikzpicture}[centerzero]
		\draw[out=90,in=-90] (0,-.3) to (0.45,.3);
		\draw (-.2,-.3) to (-.2,.3);
		\draw[out=90,in=90,looseness=2] (.15,-.3) to (.45,-.3);
		\draw[out=-90,in=-90,looseness=2] (0,.3) to (.3,.3);
	\end{tikzpicture}
}
+\scalebox{-1}[-1]{
	\begin{tikzpicture}[baseline=2]
		\draw[out=90,in=-90] (0,-.3) to (0.45,.3);
		\draw (-.2,-.3) to (-.2,.3);
		\draw[out=90,in=90,looseness=2] (.15,-.3) to (.45,-.3);
		\draw[out=-90,in=-90,looseness=2] (0,.3) to (.3,.3);
	\end{tikzpicture}
}.
\end{multline*}
Composing with 
\[\begin{tikzpicture}[centerzero]
	\draw (-.15,-.4) to (-.15,.4);
	\draw (.15,-.4) to (.15, .4);
	\symbox{-0.3,-0.15}{0.3,0.15};
\end{tikzpicture}\, \begin{tikzpicture}[centerzero]
\draw (-.15,-.4) to (-.15,.4);
\draw (.15,-.4) to (.15, .4);
\symbox{-0.3,-0.15}{0.3,0.15};
\end{tikzpicture}\]
on the top and on the bottom we see that any term that has a cap or cup to the rightmost or to the leftmost will vanish, due to \cref{proj}. Consequently
\begin{equation}\label{pharaon}
\Phi(\crossmor) = \begin{tikzpicture}[baseline=-6,scale=.8]
	\draw[out=90,in=-90]  (-.6,-.6) to (.3,.6);
	\draw[out=90,in=-90]  (-.3,-.6) to (.6,.6);
	\draw[out=90,in=-90]  (.6,-.6) to (-.3,.6);
	\draw[out=90,in=-90]  (.3,-.6) to (-.6,.6);
	\draw (-.6,-.9) to (-.6,-.8);
	\draw (-.3,-.9) to (-.3,-.8);
	\draw (.6,-.9) to (.6,-.8);
	\draw (.3,-.9) to (.3,-.8);
	\symbox{-.7,-.6}{-.2,-.8};
	\symbox{.7,-.6}{.2,-.8};
\end{tikzpicture} = \begin{tikzpicture}[centerzero]
\draw (-.15,-.4) to (-.15,.4);
\draw (.15,-.4) to (.15, .4);
\symbox{-0.3,-0.15}{0.3,0.15};
\end{tikzpicture}\, \begin{tikzpicture}[centerzero]
\draw (-.15,-.4) to (-.15,.4);
\draw (.15,-.4) to (.15, .4);
\symbox{-0.3,-0.15}{0.3,0.15};
\end{tikzpicture} + 2\, \begin{tikzpicture}[centerzero]
\draw  (-.45, -.4) to (-.45,.4);
\draw  (.45,-.4) to (.45, .4);
\draw[out=-90,in=-90,looseness=2] (-.15,.4) to (.15,.4);
\draw[out=90,in=90,looseness=2](-.15,-.4) to (.15, -.4);
\draw(-.15,-.6) to (-.15,-.7);
\draw(0.15,-.6) to (0.15,-.7);
\draw(-.45,-.6) to (-.45,-.7);
\draw(0.45,-.6) to (0.45,-.7);
\draw(-.15,.6) to (-.15,.7);
\draw(0.15,.6) to (0.15,.7);
\draw(-.45,.6) to (-.45,.7);
\draw(0.45,.6) to (0.45,.7);
\symbox{-0.55,-0.6}{-.05,-.4};
\symbox{0.05,-0.6}{.55,-.4};
\symbox{-0.55,0.6}{-.05,.4};
\symbox{0.05,0.6}{.55,.4};
\end{tikzpicture}  
+
\begin{tikzpicture}[centerzero]
	\draw[out=90, in=90, looseness=1.2] (-.45, -.4) to (.45,-.4);
	\draw[out=90,in=90,looseness=1.2] (-.15,-.4) to (.15, -.4);
	\draw (-.45, -.4) to (-.45,-.7);
	\draw (.45, -.4) to (.45,-.7);
	\draw (-.15, -.4) to (-.15,-.7);
	\draw (.15, -.4) to (.15,-.7);
	\draw[out=-90, in=-90, looseness=1.2] (-.45, .4) to (.45,.4);
	\draw[out=-90,in=-90,looseness=1.2] (-.15,.4) to (.15, .4);
	\draw (-.45, .4) to (-.45,.7);
	\draw (.45, .4) to (.45,.7);
	\draw (-.15, .4) to (-.15,.7);
	\draw (.15, .4) to (.15,.7);
	\symbox{.05,-0.6}{0.55,-.4};
	\symbox{-0.55,-0.6}{-.05,-.4};
	\symbox{.05,0.6}{0.55,.4};
	\symbox{-0.55,0.6}{-.05,.4};
\end{tikzpicture}
\overset{\cref{idemp}}{=} \begin{tikzpicture}[centerzero]
	\draw (-.15,-.4) to (-.15,.4);
	\draw (.15,-.4) to (.15, .4);
	\symbox{-0.3,-0.15}{0.3,0.15};
\end{tikzpicture}\;\begin{tikzpicture}[centerzero]
\draw (-.15,-.4) to (-.15,.4);
\draw (.15,-.4) to (.15, .4);
\symbox{-0.3,-0.15}{0.3,0.15};
\end{tikzpicture}
+
2\, \begin{tikzpicture}[baseline=6]
	\draw[out=90, in=-90, looseness=1] (-.45, -.4) to (-.15,.2);
	\draw[out=90,in=-90,looseness=1] (.45,-.4) to (.15, .2);
	\draw[out=90,in=90,looseness=2] (-.15,-.4) to (.15, -.4);
	\draw[out=90, in=-90, looseness=1] (-.15, .4) to (-.45,1);
	\draw[out=90, in=-90, looseness=1] (.15, .4) to (.45,1);
	\draw[out=-90,in=-90,looseness=2] (-.15,1) to (.15, 1);
	\symbox{-0.25,0.2}{0.25,0.4};
	\symbox{.05,-0.6}{0.55,-.4};
	\symbox{-0.55,-0.6}{-.05,-.4};
	\symbox{.05,1}{0.55,1.2};
	\symbox{-0.55,1}{-.05,1.2};
	\draw (-.15,1.2) to (-.15,1.3);
	\draw (-.45,1.2) to (-.45,1.3);
	\draw (.15,1.2) to (.15,1.3);
	\draw (.45,1.2) to (.45,1.3);
	\draw (-.15,-.6) to (-.15,-.7);
	\draw (-.45,-.7) to (-.45,-.6);
	\draw (.15,-.6) to (.15,-.7);
	\draw (.45,-.6) to (.45,-.7);
\end{tikzpicture} = \Phi\left(\jail - 2\; \Imor\right),
\end{equation}
as desired. The third relation in \cref{SL2} can be derived in a similar manner. Finally for the first relation in \cref{SL2}, we have
\begin{multline*}
\Phi(\dotcross)=  \Phi\left(\begin{tikzpicture}[centerzero]
	\draw (-0.2,-0.4) -- (-0.2,0.4);
	\draw (-0.2,-0.2) -- (0.4,0.4);
	\draw (0.4,-0.4) -- (-0.2,0.2);
\end{tikzpicture}\right) 
= - \; \begin{tikzpicture}[centerzero]
	\draw[out=90,in=-90] (-.2,-.6) to (-.4,0);
	\draw[out=90,in=-90] (-.4,0) to (-.2,.6);
	\draw[out=-90,in=90] (0,.6) to (1,-.6);
	\draw[out=90,in=-90] (0,-.6) to (1,.6);
	\draw[out=-60,in=90] (.2,.0) to (.8,-.6);
	\draw[out=60,in=-90] (.2,0) to (.8,.6);
	\draw[out=120,in=0,looseness=1] (.2,0) to (0,.2);
	\draw[out=-120,in=0,looseness=1] (.2,0) to (0,-.2);
	\draw[out=180,in=-90,looseness=1] (0,-.2) to (-.2,0);
	\draw[out=90,in=180,looseness=1] (-.2,0) to (0,0.2);
	\draw (0,.8) to (0,.9);
	\draw (-.2,.8) to (-.2,.9);
	\draw (1,.8) to (1,.9);
	\draw (.8,.8) to (.8,.9);
	\draw (0,-.8) to (0,-.9);
	\draw (-.2,-.8) to (-.2,-.9);
	\draw (1,-.8) to (1,-.9);
	\draw (.8,-.8) to (.8,-.9);
	\symbox{-.3,-.6}{.1,-.8};
	\symbox{.7,-.6}{1.1,-.8};
	\symbox{-.3,.6}{.1,.8};
	\symbox{.7,.6}{1.1,.8};
	\symbox{-.5,-.1}{-.1,.1};
\end{tikzpicture}
\overset{\cref{idemp}}{=}
- \; \begin{tikzpicture}[centerzero]
	\draw[out=90,in=-90] (-.2,-.6) to (-.4,0);
	\draw[out=90,in=-90] (-.4,0) to (-.2,.6);
	\draw[out=-90,in=90] (0,.6) to (1,-.6);
	\draw[out=90,in=-90] (0,-.6) to (1,.6);
	\draw[out=-60,in=90] (.2,.0) to (.8,-.6);
	\draw[out=60,in=-90] (.2,0) to (.8,.6);
	\draw[out=120,in=0,looseness=1] (.2,0) to (0,.2);
	\draw[out=-120,in=0,looseness=1] (.2,0) to (0,-.2);
	\draw[out=180,in=-90,looseness=1] (0,-.2) to (-.2,0);
	\draw[out=90,in=180,looseness=1] (-.2,0) to (0,0.2);
	\draw (0,.8) to (0,.9);
	\draw (-.2,.8) to (-.2,.9);
	\draw (1,.8) to (1,.9);
	\draw (.8,.8) to (.8,.9);
	\draw (0,-.8) to (0,-.9);
	\draw (-.2,-.8) to (-.2,-.9);
	\draw (1,-.8) to (1,-.9);
	\draw (.8,-.8) to (.8,-.9);
	\symbox{-.3,-.6}{.1,-.8};
	\symbox{.7,-.6}{1.1,-.8};
	\symbox{-.3,.6}{.1,.8};
	\symbox{.7,.6}{1.1,.8};
\end{tikzpicture}
  - 
   \frac{1}{2}\; \begin{tikzpicture}[baseline=-6,scale=.8]
  	\draw[out=90,in=-90]  (-.6,-.6) to (.3,.6);
  	\draw[out=90,in=-90]  (-.3,-.6) to (.6,.6);
  	\draw[out=90,in=-90]  (.6,-.6) to (-.3,.6);
  	\draw[out=90,in=-90]  (.3,-.6) to (-.6,.6);
  	\draw (-.6,-.9) to (-.6,-.8);
  	\draw (-.3,-.9) to (-.3,-.8);
  	\draw (.6,-.9) to (.6,-.8);
  	\draw (.3,-.9) to (.3,-.8);
  	\symbox{-.7,-.6}{-.2,-.8};
  	\symbox{.7,-.6}{.2,-.8};
  \end{tikzpicture}
\overset{\cref{twist}}{=}
 \begin{tikzpicture}[centerzero]
	\draw[out=90,in=-90] (-.2,-.6) to (-.2,0.6);
	\draw[out=-90,in=90] (0,.6) to (1,-.6);
	\draw[out=90,in=-90] (0,-.6) to (1,.6);
	\draw[out=-90,in=90] (.2,.0) to (.8,-.6);
	\draw[out=90,in=-90] (.2,0) to (.8,.6);
	\draw (0,.8) to (0,.9);
	\draw (-.2,.8) to (-.2,.9);
	\draw (1,.8) to (1,.9);
	\draw (.8,.8) to (.8,.9);
	\draw (0,-.8) to (0,-.9);
	\draw (-.2,-.8) to (-.2,-.9);
	\draw (1,-.8) to (1,-.9);
	\draw (.8,-.8) to (.8,-.9);
	\symbox{-.3,-.6}{.1,-.8};
	\symbox{.7,-.6}{1.1,-.8};
	\symbox{-.3,.6}{.1,.8};
	\symbox{.7,.6}{1.1,.8};
\end{tikzpicture}
- 
\frac{1}{2}\; \begin{tikzpicture}[baseline=-6,scale=.8]
	\draw[out=90,in=-90]  (-.6,-.6) to (.3,.6);
	\draw[out=90,in=-90]  (-.3,-.6) to (.6,.6);
	\draw[out=90,in=-90]  (.6,-.6) to (-.3,.6);
	\draw[out=90,in=-90]  (.3,-.6) to (-.6,.6);
	\draw (-.6,-.9) to (-.6,-.8);
	\draw (-.3,-.9) to (-.3,-.8);
	\draw (.6,-.9) to (.6,-.8);
	\draw (.3,-.9) to (.3,-.8);
	\symbox{-.7,-.6}{-.2,-.8};
	\symbox{.7,-.6}{.2,-.8};
\end{tikzpicture}
\\
\overset{\cref{fence}}{=}
\begin{tikzpicture}[centerzero]
	\draw (-.15,-.4) to (-.15,.4);
	\draw (.15,-.4) to (.15, .4);
	\symbox{-0.3,-0.15}{0.3,0.15};
\end{tikzpicture}\, \begin{tikzpicture}[centerzero]
	\draw (-.15,-.4) to (-.15,.4);
	\draw (.15,-.4) to (.15, .4);
	\symbox{-0.3,-0.15}{0.3,0.15};
\end{tikzpicture}+
\begin{tikzpicture}[centerzero]
	\draw[out=90,in=-90] (-.2,-.4) to (-.2,0.4);
	\draw[out=90,in=90] (0,-.4) to (.6,-.4);
	\draw[out=-90,in=-90] (0,.4) to (.6,.4);
	\draw[out=-90,in=90] (.2,.0) to (.4,-.4);
	\draw[out=90,in=-90] (.2,0) to (.4,.4);
	\draw (0,.6) to (0,.7);
	\draw (-.2,.6) to (-.2,.7);
	\draw (.6,.6) to (.6,.7);
	\draw (.4,.6) to (.4,.7);
	\draw (0,-.6) to (0,-.7);
	\draw (-.2,-.6) to (-.2,-.7);
	\draw (.6,-.6) to (.6,-.7);
	\draw (.4,-.6) to (.4,-.7);
	\symbox{-.3,-.4}{.1,-.6};
	\symbox{.3,-.4}{.7,-.6};
	\symbox{-.3,.4}{.1,.6};
	\symbox{.3,.4}{.7,.6};
\end{tikzpicture}
- 
\frac{1}{2}\; \begin{tikzpicture}[baseline=-6,scale=.8]
	\draw[out=90,in=-90]  (-.6,-.6) to (.3,.6);
	\draw[out=90,in=-90]  (-.3,-.6) to (.6,.6);
	\draw[out=90,in=-90]  (.6,-.6) to (-.3,.6);
	\draw[out=90,in=-90]  (.3,-.6) to (-.6,.6);
	\draw (-.6,-.9) to (-.6,-.8);
	\draw (-.3,-.9) to (-.3,-.8);
	\draw (.6,-.9) to (.6,-.8);
	\draw (.3,-.9) to (.3,-.8);
	\symbox{-.7,-.6}{-.2,-.8};
	\symbox{.7,-.6}{.2,-.8};
\end{tikzpicture}
\overset{\cref{fence}}{\underset{\cref{proj}}{=}}
\begin{tikzpicture}[centerzero]
	\draw (-.15,-.4) to (-.15,.4);
	\draw (.15,-.4) to (.15, .4);
	\symbox{-0.3,-0.15}{0.3,0.15};
\end{tikzpicture}\, \begin{tikzpicture}[centerzero]
	\draw (-.15,-.4) to (-.15,.4);
	\draw (.15,-.4) to (.15, .4);
	\symbox{-0.3,-0.15}{0.3,0.15};
\end{tikzpicture}+
\begin{tikzpicture}[centerzero]
	\draw  (-.45, -.4) to (-.45,.4);
	\draw  (.45,-.4) to (.45, .4);
	\draw[out=-90,in=-90,looseness=2] (-.15,.4) to (.15,.4);
	\draw[out=90,in=90,looseness=2](-.15,-.4) to (.15, -.4);
	\draw(-.15,-.6) to (-.15,-.7);
	\draw(0.15,-.6) to (0.15,-.7);
	\draw(-.45,-.6) to (-.45,-.7);
	\draw(0.45,-.6) to (0.45,-.7);
	\draw(-.15,.6) to (-.15,.7);
	\draw(0.15,.6) to (0.15,.7);
	\draw(-.45,.6) to (-.45,.7);
	\draw(0.45,.6) to (0.45,.7);
	\symbox{-0.55,-0.6}{-.05,-.4};
	\symbox{0.05,-0.6}{.55,-.4};
	\symbox{-0.55,0.6}{-.05,.4};
	\symbox{0.05,0.6}{.55,.4};
\end{tikzpicture}  
- 
\frac{1}{2}\; \begin{tikzpicture}[baseline=-6,scale=.8]
	\draw[out=90,in=-90]  (-.6,-.6) to (.3,.6);
	\draw[out=90,in=-90]  (-.3,-.6) to (.6,.6);
	\draw[out=90,in=-90]  (.6,-.6) to (-.3,.6);
	\draw[out=90,in=-90]  (.3,-.6) to (-.6,.6);
	\draw (-.6,-.9) to (-.6,-.8);
	\draw (-.3,-.9) to (-.3,-.8);
	\draw (.6,-.9) to (.6,-.8);
	\draw (.3,-.9) to (.3,-.8);
	\symbox{-.7,-.6}{-.2,-.8};
	\symbox{.7,-.6}{.2,-.8};
\end{tikzpicture}
\overset{\cref{pharaon}}{=}
\frac{1}{2} \left(\begin{tikzpicture}[centerzero]
	\draw (-.15,-.4) to (-.15,.4);
	\draw (.15,-.4) to (.15, .4);
	\symbox{-0.3,-0.15}{0.3,0.15};
\end{tikzpicture}\, \begin{tikzpicture}[centerzero]
	\draw (-.15,-.4) to (-.15,.4);
	\draw (.15,-.4) to (.15, .4);
	\symbox{-0.3,-0.15}{0.3,0.15};
\end{tikzpicture}
-
\begin{tikzpicture}[centerzero]
	\draw[out=90, in=90, looseness=1.2] (-.45, -.4) to (.45,-.4);
	\draw[out=90,in=90,looseness=1.2] (-.15,-.4) to (.15, -.4);
	\draw (-.45, -.4) to (-.45,-.7);
	\draw (.45, -.4) to (.45,-.7);
	\draw (-.15, -.4) to (-.15,-.7);
	\draw (.15, -.4) to (.15,-.7);
	\draw[out=-90, in=-90, looseness=1.2] (-.45, .4) to (.45,.4);
	\draw[out=-90,in=-90,looseness=1.2] (-.15,.4) to (.15, .4);
	\draw (-.45, .4) to (-.45,.7);
	\draw (.45, .4) to (.45,.7);
	\draw (-.15, .4) to (-.15,.7);
	\draw (.15, .4) to (.15,.7);
	\symbox{.05,-0.6}{0.55,-.4};
	\symbox{-0.55,-0.6}{-.05,-.4};
	\symbox{.05,0.6}{0.55,.4};
	\symbox{-0.55,0.6}{-.05,.4};
\end{tikzpicture}\right) \\
 =\frac{1}{2} \Phi\left(\jail -\hourglass \right)\overset{\cref{SL2}}{=}\Phi\left(\Imor -\Hmor \right).
\end{multline*}
 This completes the proof of the proposition.
\end{proof}

The functor in \cref{Kauff} is not faithful because $\dim  \dim\Hom_{\mathpzc{SL}} (\go^{\otimes 2},\go^{\otimes 3})\geq 8$, while $\dim \Kar \TL(-2) \left(\Phi(\go^{\otimes 2}),\Phi(\go^{\otimes 3})\right)=6$; see \cref{fickle} below.
\begin{rem}
The definition of a trivalent vertex in terms of diagrams in the Temperley--Lieb algebra is due to Kauffman and Lins \cite[Ch. 3]{KL94}, where they use these trivalent vertices to construct invariants of $3$-manifolds.
\end{rem}
\begin{prop}\label{babel} If $\cI$ is a tensor ideal of $\Tcat_{\delta,\alpha}$ such that
\[
\dim \left( \left(\Tcat_{\delta,\alpha}/\cI\right)(\go^{\otimes 2}, \go^{\otimes 2}) \right) = 4,
\] 
then $ \delta\neq 1$ and the relation 
\begin{equation}\label{viktor1}
	\Imor + \Hmor = \frac{\alpha}{1-\delta}\left( 2\,  \crossmor -\, \jail -\, \hourglass \right).
\end{equation}
holds in the quotient $ \Tcat_{\delta,\alpha}/\cI $. In particular $\go$ is a vector product algebra in the category $\Tcat_{\delta,\alpha}$.
\end{prop}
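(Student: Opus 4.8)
The plan is to argue as in the proof of \cref{sumer}, now inside a four-dimensional endomorphism space. Write $\cC=\Tcat_{\delta,\alpha}/\cI$. The space $\cC(\go^{\otimes 2},\go^{\otimes 2})$ is spanned by the six diagrams $\jail,\hourglass,\crossmor,\Imor,\Hmor,\dotcross$ (this holds already in $\Tcat_{\delta,\alpha}$ by reducing the number of internal vertices with the relations \cref{venom}, cf.\ the analogous statement in \cite{GSZ}, and hence in any quotient), and by \cref{jhcmor} the first three are linearly independent. I would first note that $\Imor$ is not in the span of $\jail,\hourglass,\crossmor$: if it were, then applying $\Rot$ and $\Switch$ and invoking \cref{rotary,flick} would put $\Hmor$ and $\dotcross$ in that span as well, forcing $\dim\cC(\go^{\otimes 2},\go^{\otimes 2})\le 3$, contrary to hypothesis. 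Hence $\{\jail,\hourglass,\crossmor,\Imor\}$ is a basis of $\cC(\go^{\otimes 2},\go^{\otimes 2})$.

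Next I would write $\Hmor=a\,\crossmor+b\,\jail+c\,\hourglass+d\,\Imor$ and apply $\Rot$; by \cref{rotary} this interchanges $\jail\leftrightarrow\hourglass$ and $\Imor\leftrightarrow\Hmor$ and fixes $\crossmor$, so substituting the first expression into the image and comparing coefficients in the basis gives $d^{2}=1$, with $a=0,\ c=-b$ if $d=1$, and $c=b$ (with $a$ undetermined) if $d=-1$. To eliminate the case $d=1$: there $\Hmor-\Imor=b\,(\jail-\hourglass)$, and applying $\Switch$ and then $\Rot$ to this identity, again via \cref{rotary,flick}, yields $\Imor=b\,(\crossmor-\jail)$, impossible since $\Imor$ is a basis vector. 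Therefore $d=-1$ and
\begin{equation*}
	\Imor+\Hmor=a\,\crossmor+b\,(\jail+\hourglass).
\end{equation*}

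It remains to determine $a$ and $b$. Applying $\Switch$ to the displayed relation, then $\Rot$ to the result, and comparing coefficients of $\jail,\hourglass,\crossmor$, I expect the single relation $a=-2b$. For a second equation I would compose the displayed relation on top with $\capmor$: here $\capmor\circ\Imor=0$ since $\capmor\circ\splitmor=\lolly$ vanishes by \cref{teardrop}; $\capmor\circ\Hmor=\alpha\,\capmor$ since the resulting bigon collapses via the third relation in \cref{chess}; $\capmor\circ\crossmor=\capmor$ by the first relation in \cref{chess}; and $\capmor\circ\hourglass=\delta\,\capmor$ since the closed loop evaluates to $\delta$. This gives $\alpha=a+b\,(1+\delta)=b\,(\delta-1)$. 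As $\alpha\in\kk^{\times}$ this forces $\delta\neq 1$, and then $b=\alpha/(\delta-1)$ and $a=2\alpha/(1-\delta)$, which upon substitution is exactly \cref{viktor1}. Finally, \cref{viktor1} is \cref{viktor} up to the scalar $\alpha/(1-\delta)$, so rescaling the multiplication $\mergemor$ (possibly over a quadratic extension of $\kk$) makes \cref{bridge} hold; thus $\go$ is a vector product algebra object in $\cC$ in the sense of \cref{vcpdef}.

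The part I expect to require the most care is pinning down the scalars: establishing that $\{\jail,\hourglass,\crossmor,\Imor\}$ is a basis (which rests on $\cI$ being $\Rot$- and $\Switch$-invariant together with the six-diagram spanning statement), ruling out the spurious branch $d=1$, and, most delicately, evaluating the compositions with $\capmor$ correctly, in particular the bigon identity $\capmor\circ\Hmor=\alpha\,\capmor$, which must be extracted from the defining relations rather than guessed.
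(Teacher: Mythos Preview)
Your spanning claim is the problem. You assert that the six diagrams $\jail,\hourglass,\crossmor,\Imor,\Hmor,\dotcross$ span $\cC(\go^{\otimes 2},\go^{\otimes 2})$, and that this ``holds already in $\Tcat_{\delta,\alpha}$''. It does not: morphisms such as $\sqmor$ lie in $\Tcat_{\delta,\alpha}(\go^{\otimes 2},\go^{\otimes 2})$ and are not reducible to these six using only \cref{vortex,venom,chess}; indeed the paper later has to \emph{impose} \cref{sqburst} as an extra relation. Consequently your argument that $\Imor\notin\operatorname{span}\{\jail,\hourglass,\crossmor\}$ breaks down: even if all six diagrams sit in that $3$-dimensional span, the fourth basis vector of $\cC(\go^{\otimes 2},\go^{\otimes 2})$ could be something else entirely, and no contradiction with $\dim=4$ follows. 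This is a genuine gap, because you then rely on $\{\jail,\hourglass,\crossmor,\Imor\}$ being a basis to express $\Hmor$ and to compare coefficients.

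The gap is easily repaired. You only need that the five morphisms $\jail,\hourglass,\crossmor,\Imor,\Hmor$ are linearly dependent (five vectors in a four-dimensional space), not that anything spans. If $\Imor\notin\operatorname{span}\{\jail,\hourglass,\crossmor\}$ then $\{\jail,\hourglass,\crossmor,\Imor\}$ is linearly independent, hence a basis, and your computation (which is correct, including the elimination of the $d=1$ branch and the cap calculation) goes through. If instead $\Imor\in\operatorname{span}\{\jail,\hourglass,\crossmor\}$, run the argument of \cref{sumer}: one gets $\Imor=\tfrac{\alpha}{2}(\jail-\crossmor)$, $\Hmor=\tfrac{\alpha}{2}(\hourglass-\crossmor)$, and capping the second relation forces $\delta=3$; then $\Imor+\Hmor=-\tfrac{\alpha}{2}(2\,\crossmor-\jail-\hourglass)=\tfrac{\alpha}{1-\delta}(2\,\crossmor-\jail-\hourglass)$, so \cref{viktor1} still holds. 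The paper avoids the case split altogether by starting from an arbitrary dependence among the five and symmetrising with $1+\Rot$; your route is a little more direct once patched. One minor point: to obtain the vector product algebra structure there is no need for a quadratic extension---rather than rescaling $\mergemor$, rescale $\capmor$ and $\cupmor$ by reciprocal factors, which adjusts $\Imor$ and $\Hmor$ linearly and absorbs the scalar in \cref{viktor1} directly.
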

\begin{proof}
There is a nontrivial linear combination 
\begin{equation}\label{egg}
	a_1\, \crossmor+ a_2\,\jail+a_3\,\hourglass+a_4\,  \Imor+a_5\, \Hmor =0\qquad a_1,a_2,a_3,a_4,a_5 \in \kk.
 \end{equation}
Applying  $ 1+\Rot  $ to this we get
\begin{equation}\label{lulu}
	2a_1\, \crossmor+ \left(a_2+a_3\right) \left(\jail+\,\hourglass\right)+\left(a_4+a_5  \right) \left(\Imor+\, \Hmor\right)=0. 
\end{equation}

We split the discussion into two cases according to whether $a_1=0$ or $a_1\neq 0$.  First assume $a_1=0$, and suppose also that $a_2+a_3\neq 0$. In this case, Equation \cref{lulu} takes the form
\begin{equation}\label{dina}
\jail+\hourglass =\lambda_1\left(\Imor+\Hmor\right),\quad \lambda_1 \in \kk. 
\end{equation}
It follows from \cref{jhcmor} that $\lambda_1\neq0$. On the other hand, we compute 
\begin{multline*}
-\lambda_1^{-1}\left(\jail+\hourglass\right)=	-\Imor -\Hmor = -\Imor -\dotcross -\Hmor+\dotcross = \left(\Switch +\Rot\circ\Switch\right)\left(\Imor+\Hmor\right)\\
\overset{\cref{dina}}{=}\lambda_1^{-1}\left( \crossmor + \hourglass\right)
+\lambda_1^{-1}\left(\crossmor+\jail\right)
	= \lambda_1^{-1}\left(2\;\crossmor+\jail +\hourglass\right).
\end{multline*}
Hence 
\[
\crossmor + \jail +\hourglass=0.
\] 
This is impossible by \cref{jhcmor}. Thus, if $a_1=0$, then $a_2+a_3=0$. A similar argument shows that if $a_1=0$ then $a_4+a_5=0$. Consequently, if $a_1=0$, then  \cref{egg} must be of the form
\begin{equation}\label{copper}
 	\Imor -\Hmor =\lambda_2\left(\jail -\hourglass\right),
 	\quad \lambda_2 \in\kk. 
 \end{equation}
We will show that \cref{copper} gives $\go$ the structure of a vector product algebra object in the in $\Tcat_{\delta,\alpha}/\cI$. We compute
\begin{multline*}
\Imor -\Hmor -2\,\dotcross   \overset{\cref{flick}}{\underset{\cref{rotary}}{=}} \left(\Rot -1\right)\circ\Switch\left(\Imor-\Hmor\right) 
 \overset{\cref{copper}}{=} \lambda_2\left(\Rot -1\right)\circ\Switch\left(\jail-\hourglass\right)\\
  = -\lambda_2\left(\jail -\hourglass\right) 
 \overset{\cref{copper}}{=} \Hmor -\Imor.
\end{multline*}
This gives 
\begin{equation}\label{ouafa}
	\dotcross = \Imor -\Hmor\overset{\cref{copper}}{=}\lambda_2\left(\jail -\hourglass\right).
\end{equation}
Applying $\Switch$ to \cref{ouafa} yields
\begin{equation}\label{gold}
	-\Hmor = \lambda_2\left(\crossmor - \hourglass\right) \quad \text{and} \quad -\Imor =\lambda_2 \left(\crossmor -\jail\right).
\end{equation}
Adding the two equations in \cref{gold} gives
\begin{equation}\label{silver}
	\Imor + \Hmor = -\lambda_2 \left( 2\,  \crossmor -\, \jail -\, \hourglass \right).
\end{equation}
To find $\lambda_2$, we compose \cref{silver},  on the top, with $\capmor$, to get $\alpha\,  \capmor = -\lambda_2\left( 1- \delta\right)\capmor $. It follows, as in \cref{bird}, that $\delta\neq 1$ and  $\displaystyle \lambda_2=\frac{\alpha}{\delta-1}$.  Now, define  $\mergemor\, '=\mergemor\, ,\; \capmor\, '=\frac{1-\delta}{\alpha}\,\capmor$ and $\cupmor\, '=\frac{\alpha }{1-\delta}\cupmor$. Then it can be easily verified that $\mergemor\, ',\capmor\, '$ and $\cupmor\, '$  make $\go$ into a vector product algebra object, in the sense of \cref{vcpdef}, in the category~$\Tcat_{\delta,\alpha}$. This concludes the  $a_1=0$ case in \cref{egg}.

  Next, we consider the other case, in which $a_1\neq 0$.   Using $ 1+\Rot$, we can rewrite \cref{egg} in the form
\begin{equation}\label{feather}
	\crossmor = \lambda \left(\jail +\hourglass\right)+\mu \left(\Imor+\Hmor\right), \quad \text{for }\lambda,\mu \in \kk.
\end{equation}
To find $\lambda$ and $\mu$, we compute
\begin{equation}\label{beak1}
	\jail = \Switch\left(\crossmor\right)\overset{\cref{feather}}{=}\lambda \left(\crossmor+\; \hourglass \right)+\mu\left(-\Imor -\dotcross\right),
\end{equation}
and 
\begin{equation}\label{beak2}
	\hourglass =\Rot\left(\jail\right)\overset{\cref{beak1}}{=}\lambda\Rot\left(\crossmor+\, \hourglass\right)+\mu\Rot \left(-\Imor-\dotcross\right)=\lambda \left(\crossmor+\, \jail\right)+\mu\left(-\Hmor+\dotcross\right).
\end{equation}
Adding \cref{beak1} to \cref{beak2} gives 
\[ 
\jail+\,\hourglass=\lambda\left(\jail +\hourglass+2 \, \crossmor \right)-\mu \left( \Imor+\, \Hmor\right)\overset{\cref{feather}}{=}2\lambda\left(\jail +\hourglass \right)+ (2\lambda-1)\,\crossmor.
\]
By \cref{jhcmor}, this gives $\lambda =\frac{1}{2}$. Composing \cref{feather} on the top with $\capmor$ and using \cref{chess} gives~$ \mu = \frac{1-\delta}{2\alpha}$. That is 
\begin{equation}\label{night}
	\crossmor =\frac{1}{2}\left( \jail +\; \hourglass \right) +\frac{1-\delta}{\alpha}\left( \Imor +\; \Hmor\right).
\end{equation}
 Note that $\delta\neq 1$ by \cref{jhcmor}. Notice also that \cref{night} is equivalent to \cref{silver}, and so we get a solution that is a vector product algebra object, as in the previous case.
\end{proof}
Let $\mathpzc{V}$ be additive Karoubi envelope of the monoidal category obtained from $\Tcat_{7,-6}$ by modding out by the two-sided tensor ideal generated by \cref{viktor1}. Take $\kk$ to be the field of real numbers, and let $G_2$ be the compact real exceptional Lie group of rank $2$. Let $\OO$ be the algebra of classical octonions. As a Euclidean space, $\OO$ is identified with $\R^8$ equipped with the usual inner product. The imaginary part of $\OO$,  
\[
\Im(\OO)=\{x\in \OO|\; \langle 1_{\OO},x\rangle =0\}
\]
is a vector product algebra, with cross product given by  $x\times y = xy -\langle x, y\rangle \cdot 1_{\OO}$. This is the unique real vector product algebra of dimension $7$. Moreover, $\Im (\OO)$ can be identified with the minimal fundamental representation of $G_2$. The following proposition exploits the connection between $G_2$ and $\Im(\OO)$.
\begin{prop}\label{G2}
There is a full, essentially surjective functor $\mathpzc{V}\to G_2\md$, given on objects by sending $\go$ to $\Im(\OO)$ and on objects by sending $\capmor$ to the inner product $\langle-,-\rangle$, and $\mergemor$ to the cross product.
\end{prop}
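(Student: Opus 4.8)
The plan is to build the functor on generators, show it descends to $\mathpzc{V}$, and then treat fullness and essential surjectivity as separate steps; fullness is where the real content lies.

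\emph{Construction of the functor.} Working over $\kk=\R$, set $\Phi(\go)=\Im(\OO)$, send $\crossmor$ to the genuine symmetry of $G_2\md$, $\capmor$ to $\langle-,-\rangle$, $\cupmor$ to $\sum_i e_i\otimes e_i$ for an orthonormal basis $(e_i)$ of $\Im(\OO)$, and $\mergemor$ to the cross product $\times$. All of these maps are $G_2=\mathrm{Aut}(\OO)$-equivariant, so it suffices to check that they satisfy the defining relations of $\Tcat_{7,-6}$ in \cref{Tdef} together with relation \cref{viktor1} for $(\delta,\alpha)=(7,-6)$; this defines a monoidal functor $\Tcat_{7,-6}\to G_2\md$, which kills the ideal $\cI$ generated by \cref{viktor1} and, since $G_2\md$ is idempotent complete, extends uniquely to $\mathpzc{V}$. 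The relations in \cref{vortex,venom} and the first two relations in \cref{chess} hold at once because $\langle-,-\rangle$ is a nondegenerate symmetric pairing (so $\go$ is self-dual with $\capmor,\cupmor$ its evaluation and coevaluation), $\crossmor$ is sent to an honest symmetry, and $\times$ is alternating. The bubble relation holds because $\dim\Im(\OO)=7$. The third relation in \cref{chess}, namely $\mergemor\circ\splitmor=-6\,\id$, reduces via the formula for $\Phi(\splitmor)$ as in \cref{magneto} to the identity $\sum_i(x\times e_i)\times e_i=-6\,x$, which follows from \cref{crpd} and the alternating property of $\times$ upon summing over an orthonormal basis. Finally, for $(\delta,\alpha)=(7,-6)$ the scalar $\alpha/(1-\delta)$ equals $1$, so \cref{viktor1} is precisely \cref{viktor}, i.e.\ the vector product relation \cref{bridge}; written out it is \cref{fish}, which holds for $\Im(\OO)$ since the classical seven-dimensional cross product is a vector product algebra object (the discussion preceding \cref{babel}, together with \cref{crpd}, to which \cref{fish} is equivalent; see also \cite{Baez,West}).

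\emph{Fullness --- the main obstacle.} Since $\go$ is self-dual it is enough to show that for every $k$ the map $\mathpzc{V}(\one,\go^{\otimes k})\to\Hom_{G_2}(\R,\Im(\OO)^{\otimes k})=(\Im(\OO)^{\otimes k})^{G_2}$ is surjective. A morphism $\one\to\go^{\otimes k}$ in $\mathpzc{V}$ is a $\kk$-linear combination of trivalent string diagrams, and $\Phi$ carries such a diagram to the $G_2$-invariant tensor obtained by contracting copies of the metric $g=\Phi(\cupmor)$ and of the alternating trilinear form $\phi(x,y,z)=\langle x\times y,z\rangle$. Thus fullness is exactly the statement that the ring of $G_2$-invariant tensors on $\Im(\OO)\cong\R^7$ is generated by $g$ and $\phi$ --- equivalently, that the Hodge-dual four-form $\psi=*\phi$ is a quadratic contraction of $\phi$ with itself. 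This is the first fundamental theorem of invariant theory for $G_2$; proving it, or deducing it from the known decompositions of $\Im(\OO)^{\otimes n}$ into irreducibles, or invoking Kuperberg's $G_2$ spider \cite{Kup96} (whose generating morphisms are precisely the images of $\cupmor$ and $\mergemor$), is the crux of the proposition and the step I expect to be hardest.

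\emph{Essential surjectivity.} The category $G_2\md$ is semisimple, so it suffices to realize every irreducible representation as a summand of an object of $\mathpzc{V}$. The two fundamental representations are $\Im(\OO)=\Phi(\go)$ and the adjoint representation $\fg_2$, and $\fg_2$ occurs inside $\bigwedge^2\Im(\OO)\subseteq\Im(\OO)^{\otimes 2}$ as the complement of the copy of $\Im(\OO)$ cut out by the idempotent $-\tfrac{1}{6}\,\splitmor\circ\mergemor\in\End_{\mathpzc{V}}(\go^{\otimes 2})$; hence both fundamentals lie in the essential image. Consequently so does every irreducible, being the Cartan component of a tensor product of fundamentals and therefore a direct summand of some $\Im(\OO)^{\otimes n}$. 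By fullness the projection of $\Im(\OO)^{\otimes n}$ onto that summand lifts along the surjection $\End_{\mathpzc{V}}(\go^{\otimes n})\twoheadrightarrow\End_{G_2}(\Im(\OO)^{\otimes n})$ --- lift modulo the radical, where the target algebra is semisimple, then lift the resulting idempotent through the nilpotent radical --- so the irreducible is $\Phi(\go^{\otimes n},e)$ for a suitable idempotent $e$. This completes the proof that $\Phi$ is full and essentially surjective.
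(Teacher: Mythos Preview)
Your argument is correct, but the route to fullness differs from the paper's. You reduce fullness to the first fundamental theorem of invariant theory for $G_2$---that every $G_2$-invariant tensor on $\Im(\OO)^{\otimes k}$ is built from the metric $g$ and the $3$-form $\phi$---and then propose to cite it (e.g.\ via Kuperberg \cite{Kup96}). The paper instead argues Tannakian-style, exactly as in \cref{full}: let $\cC$ be the Karoubi envelope of the image of $\Phi$; this is a rigid symmetric $C^*$-category with simple unit, so by Doplicher--Roberts reconstruction $\cC\simeq H\md$ for a compact group $H$. Since $\cC\subseteq G_2\md$ one has $G_2\subseteq H$, while $H$ preserves $\langle-,-\rangle$ and $\times$ on $\Im(\OO)$, and the stabiliser of these is \emph{exactly} $G_2$ (this is the input from \cite{Bryant,Harvey} the paper cites), so $H\subseteq G_2$. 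This delivers fullness and essential surjectivity in one stroke, and makes the proof uniform with the other exceptional cases in \cref{sec:functor}; your approach is more hands-on and ties in directly with Kuperberg's $G_2$ spider, at the cost of invoking a $G_2$-specific invariant-theory result.

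A second, smaller difference: rather than verifying \cref{viktor1} explicitly from the vector product identities, the paper just notes that $\dim\End_{G_2}(\Im(\OO)^{\otimes 2})=4$ and applies \cref{babel}, which forces \cref{viktor1} automatically.
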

\begin{proof}
The group of real linear transformations that preserve the inner product and the cross product on $\Im(\OO)$ is exactly $G_2$. This result is stated without proof in \cite[Proposition~1]{Bryant}, but a proof can be found in \cite[Theorem~6.80]{Harvey}. This gives a functor $\Tcat_{7,-6}\to G_2\md$  that sends $\go$ to $\Im(\OO)$.  Since $\dim \End_{G_2}(\Im(\OO)^{\otimes 2})=4$, it follows, by \cref{babel}, that \cref{viktor1} holds in $G_2\md$, and so this functor factors through $\mathpzc{V}$. The proof of fullness and essential surjectivty is similar to the proof of \cref{full}. 
\end{proof}
\begin{rem}
		Note that the functor in \cref{G2} is different from the one in \cref{sec:functor} in that in \cref{G2}, the generating object is sent to the minimal fundamental representation, while in \cref{sec:functor} it is sent to the adjoint representation. 
\end{rem}
\begin{rem}
Kuperberg \cite{Kup96} gave  a full presentation of spider categories of rank $2$ quantum groups of types $A_2,B_2$ and $G_2$. His category, in type $G_2$, at $q=1$, is a quotient of $\mathpzc{V}$, and is equivalent to $G_2\md$.
\end{rem}

\begin{prop}\label{dilum}
	If $\cI$ is a tensor ideal of $\Tcat_{\delta,\alpha}$ such that
	\begin{equation} \label{cinco}
	  \dim \left( \left(\Tcat_{\delta,\alpha}/\cI\right)(\go^{\otimes 2}, \go^{\otimes 2}) \right) = 5,
	\end{equation}
	then  $\go$ is either a Lie algebra object or a  vector product algebra object in $\Tcat_{\delta,\alpha}/\cI$. 
\end{prop}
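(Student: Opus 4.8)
The plan is to run the $\Rot$–$\Switch$ analysis from the proof of \cref{babel}, now keeping the morphism $\dotcross$ in play, organised around the $\pm 1$-eigenspaces of $\Rot$. By \cref{rotary} and \cref{flick} the six morphisms $\jail,\ \hourglass,\ \crossmor,\ \Imor,\ \Hmor,\ \dotcross$ span a subspace $W$ of $\bigl(\Tcat_{\delta,\alpha}/\cI\bigr)(\go^{\otimes 2},\go^{\otimes 2})$ that is stable under $\Rot$ and $\Switch$; on $W$ the involution $\Rot$ has $(+1)$-eigenspace $W_{+}=\Span\{\crossmor,\ \jail+\hourglass,\ \Imor+\Hmor\}$ and $(-1)$-eigenspace $W_{-}=\Span\{\jail-\hourglass,\ \Imor-\Hmor,\ \dotcross\}$. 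By \cref{cinco} the hom-space has dimension $5$, so the six morphisms are linearly dependent and the space $K$ of linear relations among them is nonzero; $K$ is stable under $\Rot$ and $\Switch$ since $\cI$ is (noted after \cref{flick}), so $K=P_{+}K\oplus P_{-}K$ with $P_{\pm}=\frac12(1\pm\Rot)$ and $P_{\pm}K\subseteq W_{\pm}$. The argument then splits according to whether $P_{+}K$ vanishes.

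If $P_{+}K\neq 0$, the first step is to produce a relation $R_{+}\in P_{+}K$ whose $\crossmor$-coefficient is nonzero. If no such $R_{+}$ existed, $P_{+}K$ would lie in $\Span\{\jail+\hourglass,\ \Imor+\Hmor\}$, and any nonzero relation in this span — namely $\jail+\hourglass=0$, or $\Imor+\Hmor=0$, or $\jail+\hourglass=\lambda\,(\Imor+\Hmor)$ with $\lambda\neq 0$ — makes the quotient trivial: the first on composing with $\mergemor$ (using $\mergemor\circ\hourglass=0$, as in \cref{teardrop}), the second on composing with $\capmor$ (using $\capmor\circ\Imor=0$ from \cref{teardrop} and $\capmor\circ\Hmor=\alpha\capmor$, which follows from the third relation of \cref{chess}), and the third by the computation in the proof of \cref{babel}, which gives $\crossmor+\jail+\hourglass=0$ and hence $(2+\delta)\capmor=0$ with $\delta\neq -2$; in every case $\capmor=0$ or $\mergemor=0$, so \cref{bird} makes the quotient trivial, against \cref{cinco}. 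Such an $R_{+}$ therefore exists, and it is precisely a relation among $\crossmor,\jail,\hourglass,\Imor,\Hmor$ with nonzero $\crossmor$-coefficient — the input of the nondegenerate branch of the proof of \cref{babel} — which yields $\delta\neq 1$, the relation \cref{viktor1}, and the rescaled cup and cap making $\go$ a vector product algebra object as in \cref{vcpdef}.

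If instead $P_{+}K=0$, then $K\subseteq W_{-}$, and I would argue as follows. Let $0\neq v=x(\jail-\hourglass)+y(\Imor-\Hmor)+z\,\dotcross\in K$. Since $K$ is $\Switch$-stable, $\Switch v\in K\subseteq W_{-}$; but $\Switch$ sends the $(\jail-\hourglass)$-component of $v$ to a $\crossmor$-term, and $W_{-}$ has no $\crossmor$-component, so $x=0$; then $\Switch v=-y\,\Imor-z\,\Hmor+y\,\dotcross$, whose $\Imor$- and $\Hmor$-coefficients must be opposite for membership in $W_{-}$, forcing $z=-y$. Hence every element of $K$ is a multiple of $\Imor-\Hmor-\dotcross$, so $K=\kk\,(\Imor-\Hmor-\dotcross)$; that is, the Jacobi identity \cref{Jacob} holds, and with the skew-symmetry of $\mergemor$ built into the second relation of \cref{chess} this makes $\go$ a Lie algebra object. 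The two cases being exhaustive, the proposition follows.

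The main obstacle is the middle step — ruling out that $P_{+}K$ could be ``non-generic'' — which rests on the small composition identities $\capmor\circ\Imor=0$, $\capmor\circ\Hmor=\alpha\capmor$, $\mergemor\circ\hourglass=0$ and on a brief enumeration of the possible relations in $\Span\{\jail+\hourglass,\ \Imor+\Hmor\}$; the rest is an appeal to \cref{babel} and the short rigidity computation above. Everything is valid over any field of characteristic $\neq 2$, the only divisions being by $\alpha$, $2$, $2+\delta$, and, inside \cref{babel}, $1-\delta$, so no characteristic-$3$ phenomenon intrudes.
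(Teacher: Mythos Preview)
Your argument is correct and uses the same toolkit as the paper --- the $\Rot$/$\Switch$ operators, the eigenspace decomposition, \cref{jhcmor}, and the computations from \cref{babel} --- but your case split is organised differently. The paper first separates according to whether $\dotcross$ appears with nonzero coefficient in the dependence relation (its Case~1 versus Case~2); inside Case~1 it then distinguishes $\mu=1$ (Lie algebra) from $\mu\neq 1$ (vector product algebra), and in Case~2 it applies $\Switch$ to reduce back to Case~1. You instead split directly on whether $P_{+}K$ vanishes, which cleanly isolates the Lie algebra outcome ($P_{+}K=0$ forces $K=\kk(\Imor-\Hmor-\dotcross)$ via the $\Switch$-stability computation) from the vector product outcome ($P_{+}K\neq 0$ supplies exactly the ``feather'' input to \cref{babel}). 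Your organisation avoids the reduction step and the internal $\mu$-dichotomy, at the cost of the small enumeration ruling out nonzero relations in $\Span\{\jail+\hourglass,\ \Imor+\Hmor\}$; the paper handles that enumeration implicitly inside its Case~1 analysis. Neither route is deeper than the other --- they are two equivalent ways to slice the same $\Rot$/$\Switch$ analysis.
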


\begin{proof}
	By \cref{cinco}, there is a nontrivial linear dependence relation between the morphisms
	\[
	\jail\quad,\quad \hourglass\quad ,\quad\Imor\quad ,\quad \Hmor\quad,\quad \crossmor \quad\text{and}\quad \dotcross.
	\]
	
	\textbf{Case 1}\label{case1}: Suppose  we have a relation of the form 
	\[
	\dotcross = a\;\jail+b\; \hourglass +c\; \Imor+d\; \Hmor +e\; \crossmor.
	\]
	From this it follows that
	\begin{equation}\label{Gerra}
		\dotcross\overset{\cref{turvy}}{=} \frac{1}{2}\left(1-\Rot\right)\left(\dotcross\right) = \lambda\left(\jail- \;\hourglass\right)+ \mu \left(\Imor-\; \Hmor\right), 
	\end{equation}
	or equivalently (by \cref{turvy})
	\begin{equation}\label{pyramid}
		-\begin{tikzpicture}[centerzero]
			\draw (-0.2,0.2) -- (0.2,0.2);
			\draw (0.3,-0.3) -- (-0.3,0.3);
			\draw (-0.3,-0.3) -- (0.3,0.3);
		\end{tikzpicture}
		=
		\lambda\left(\jail- \;\hourglass\right)+ \mu \left(\Imor-\; \Hmor\right), 
	\end{equation}
	with $ \lambda = (a-b)/2$ and $\mu = (c-d)/2 $. Precomposing \cref{pyramid} with $\cupmor\,$, and using \cref{chess,teardrop,bird}, we get 
	\begin{equation}\label{lat}
		\alpha(1-\mu)= \lambda(\delta-1).
	\end{equation} 
	We apply $\Switch$ to \cref{Gerra} to get 
	\begin{equation}\label{Babel}
		-\Hmor = \lambda \left(\crossmor - \; \hourglass \right)+\mu \left(-\Imor +\; \dotcross\right). 
	\end{equation}
	Adding \cref{Gerra} to \cref{Babel} gives 
	\[
	\dotcross -\,\Hmor=\lambda\left(\jail+\, \crossmor -2\, \hourglass \right)+\mu\left(\dotcross -\,\Hmor\right).
	\]
	If $\mu=1$, then it follows from \cref{jhcmor} that $\lambda=0$, and so  \cref{Babel} becomes the Jacobi identity \cref{Jacob}, giving us the Lie algebra object solution in this case. Now, assume $\mu\neq 1$. This assumption implies, by \cref{lat}, that $\delta\neq 1$. We then multiply \cref{Gerra} by $\mu$, add it to \cref{Babel}, and use \cref{lat}, to get 
	\[
	\Imor +\; \Hmor = \frac{\alpha}{1-\delta}\left(2\; \crossmor - \; \jail -\; \hourglass\right).
	\]
	As in the proof of \cref{babel}, this makes $\go$ into a vector product algebra object in the category~$\Tcat_{\delta,\alpha}$.

\textbf{Case 2}: Now suppose we have a nontrivial relation of the form 
	\[
	a\;\jail+b\; \hourglass +c\; \Imor+d\; \Hmor +e\; \crossmor =0.
	\]
The space $\cI(\go^{\otimes 2},\go^{\otimes 2})$ is invariant under $\Rot$ (recall the definitions in \cref{QTC}), and decomposes into two eigenspaces, corresponding to the two eigenvalues, of $\Rot$,  $1$ and $-1$. 	 The first subcase corresponds to the eigenvalue $1$ and gives 
	\begin{equation}\label{sweet}
		u \left(\jail+\; \hourglass\right)+v\left(\Imor+\; \Hmor\right)+w \; \crossmor= 0,
	\end{equation}
	for some $u,v,w\in \kk$ not all zero, and the second subcase corresponds to the eigenvalue $-1$ and gives  
	\begin{equation}\label{bitter}
		u \left(\jail-\; \hourglass\right)+v\left(\Imor-\; \Hmor\right)= 0,
	\end{equation}
	for $u,v\in \kk$ not both zero. By virtue of \cref{jhcmor}, we must have $v\neq 0$ in both subcases \cref{sweet,bitter}. Then applying $\Switch$ to either \cref{sweet} or \cref{bitter} will bring us back to 	\hyperref[case1]{\textbf{Case~1}}.
\end{proof}
\section{The exceptional Lie algebra case \label{sec4}}
For the remainder of the paper, we focus on the case where the dimension in \cref{cinco} is $5$. We will  fix the ground ring $\kk$ to be the field of complex numbers. Let $\alpha=1$ and let be $\delta$ be any complex number.  We will write $\Tcat_{\delta}$ instead of $\Tcat_{\delta,1}$.
\begin{defin} \label{Fdef}
Let $\Fcat_{\delta}$ be the strict monoidal category obtained from $\Tcat_{\delta}$ by imposing the following three additional relations:
\begin{gather} \label{magic}
		\dotcross
		= \Imor - \Hmor \ ,
		\\ \label{sqburst}
		\sqmor
		=\frac{1}{6} \left(\, \Hmor + \Imor\, \right)
		+ 
		\frac{5}{6(\delta+2)} \left(\, \jail + \hourglass+\crossmor \, \right) \ ,
		\\ \label{pentburst}
		\begin{aligned}
			\pentmor &=
			\frac{1}{12}	\left(\;	\begin{tikzpicture}[anchorbase]
				\draw (-0.2,0) -- (0,0.25) -- (0.2,0);
				\draw (0,0.25) -- (0,0.4);
				\draw (-0.2,-0.25) -- (-0.2,0) -- (-0.3,0.4);
				\draw (0.2,-0.25) -- (0.2,0) -- (0.3,0.4);
			\end{tikzpicture}
			+
			\begin{tikzpicture}[anchorbase]
				\draw (-0.3,0.3) -- (0,0) -- (0.3,0.3);
				\draw (0,0.3) -- (-0.15,0.15);
				\draw (0,0) -- (0,-0.15) -- (-0.15,-0.3);
				\draw (0,-0.15) -- (0.15,-0.3);
			\end{tikzpicture}
			+
			\begin{tikzpicture}[anchorbase]
				\draw (0.3,0.3) -- (0,0) -- (-0.3,0.3);
				\draw (0,0.3) -- (0.15,0.15);
				\draw (0,0) -- (0,-0.15) -- (0.15,-0.3);
				\draw (0,-0.15) -- (-0.15,-0.3);
			\end{tikzpicture}+\begin{tikzpicture}[anchorbase]
				\draw (-0.2,0) -- (0,0.25) -- (0.2,0);
				\draw (0,0.25) -- (0,0.4);
				\draw  (-0.2,0) -- (-0.3,0.4);
				\draw (0.2,0) -- (0.3,0.4);
				\draw (0.2,0) -- (-0.3,-0.2);
				\draw  (-0.2,0) -- (0.3,-0.2);
			\end{tikzpicture}\; \right)
			\\
			&+ \frac{5}{12(\delta+2)}
			\left(
			\begin{tikzpicture}[centerzero]
				\draw (-0.15,-0.3) -- (-0.15,-0.23) arc(180:0:0.15) -- (0.15,-0.3);
				\draw (-0.3,0.3) -- (0,0.08) -- (0.3,0.3);
				\draw (0,0.3) -- (0,0.08);
			\end{tikzpicture}
			+
			\begin{tikzpicture}[centerzero]
				\draw (-0.2,-0.3) -- (-0.2,0.3);
				\draw (0,0.3) -- (0.15,0) -- (0.3,0.3);
				\draw (0.15,0) -- (0.15,-0.3);
			\end{tikzpicture}
			+
			\begin{tikzpicture}[centerzero]
				\draw (0.2,-0.3) -- (0.2,0.3);
				\draw (0,0.3) -- (-0.15,0) -- (-0.3,0.3);
				\draw (-0.15,0) -- (-0.15,-0.3);
			\end{tikzpicture}
			+
			\begin{tikzpicture}[centerzero]
				\draw (-0.3,0.3) -- (-0.3,0.23) arc(180:360:0.15) -- (0,0.3);
				\draw (0.3,0.3) -- (0.15,0) -- (-0.2,-0.3);
				\draw (0.2,-0.3) -- (0.15,0);
			\end{tikzpicture}
			+
			\begin{tikzpicture}[centerzero]
				\draw (0.3,0.3) -- (0.3,0.23) arc(360:180:0.15) -- (0,0.3);
				\draw (-0.3,0.3) -- (-0.15,0) -- (0.2,-0.3);
				\draw (-0.2,-0.3) -- (-0.15,0);
			\end{tikzpicture}
			+     \begin{tikzpicture}[centerzero]
				\draw (0,0.3) -- (0,-0.15) -- (-0.15,-0.3);
				\draw (0,-0.15) -- (0.15,-0.3);
				\draw (-0.2,0.3) -- (-0.2,0.25) arc(180:360:0.2) -- (0.2,0.3);
			\end{tikzpicture}
			+
			\begin{tikzpicture}[centerzero]
				\draw (0,0.3) to[out=-45,in=70] (0.15,-0.3);
				\draw (-0.3,0.3) -- (0,0) -- (0.3,0.3);
				\draw (0,0) -- (-0.15,-0.3);
			\end{tikzpicture}
			+
			\begin{tikzpicture}[centerzero]
				\draw (0,0.3) to[out=225,in=110] (-0.15,-0.3);
				\draw (0.3,0.3) -- (0,0) -- (-0.3,0.3);
				\draw (0,0) -- (0.15,-0.3);
			\end{tikzpicture}
			+
			\begin{tikzpicture}[centerzero]
				\draw (-0.3,0.3) -- (-0.15,0.15) -- (0,0.3);
				\draw (-0.15,0.15) -- (0.15,-0.3);
				\draw (0.3,0.3) -- (-0.15,-0.3);
			\end{tikzpicture}
			+
			\begin{tikzpicture}[centerzero]
				\draw (0.3,0.3) -- (0.15,0.15) -- (0,0.3);
				\draw (0.15,0.15) -- (-0.15,-0.3);
				\draw (-0.3,0.3) -- (0.15,-0.3);
			\end{tikzpicture}
			\right).
		\end{aligned}
\end{gather}
\end{defin}

\begin{lem} \label{spinner}
	If $\cI$ is a tensor ideal of $\Tcat_{\delta}$ such that relation \cref{magic} holds  in $\Tcat_{\delta}/\cI$,    then the relation
    \begin{equation} \label{triangle}
        \trimor = \frac{1}{2}\, \mergemor
    \end{equation}
     holds in $\Tcat_{\delta}/\cI$.
\end{lem}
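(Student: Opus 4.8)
The plan is to reduce the triangle $\trimor$ to a single trivalent vertex by one application of \cref{magic}, the one non-formal ingredient being the symmetry of the triangle picture. Reading the diagram $\trimor$ from the bottom, two of its three trivalent vertices form the pattern $\Hmor$ of \cref{Hmor}, and composing the remaining vertex on top exhibits $\trimor = \mergemor \circ \Hmor$. Relation \cref{magic} lets me rewrite $\Hmor = \Imor - \dotcross$, so $\trimor = \mergemor \circ \Imor - \mergemor \circ \dotcross$. Reading $\Imor$ from the bottom gives $\Imor = \splitmor \circ \mergemor$, and the third relation in \cref{chess} (with $\alpha = 1$) is exactly $\mergemor \circ \splitmor = \id_\go$; hence $\mergemor \circ \Imor = \mergemor$ and therefore $\trimor = \mergemor - \mergemor \circ \dotcross$.

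The heart of the proof is then to identify $\mergemor \circ \dotcross$. From \cref{flick} we have $\Hmor \circ \crossmor = \Switch(\Hmor) = -\dotcross$, and since $\crossmor$ is an involution this gives $\dotcross = -\Hmor \circ \crossmor$; composing with $\mergemor$ on top, $\mergemor \circ \dotcross = -(\mergemor \circ \Hmor) \circ \crossmor = -\trimor \circ \crossmor$. So the whole statement comes down to the claim $\trimor \circ \crossmor = -\trimor$.

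I would prove that claim by bending the output strand of $\trimor$ downwards, turning it into a morphism $\go^{\otimes 3} \to \one$, and showing it is totally antisymmetric in its three legs. The triangle picture is invariant under the dihedral group of symmetries of the underlying equilateral triangle: a $120^{\circ}$ rotation permutes the three boundary legs cyclically and rotates each of the three trivalent vertices, which introduces no sign because the trivalent vertex, read as a map $\go^{\otimes 3}\to\one$, is cyclically symmetric (relation $\triform$ in \cref{topsy}); a reflection transposes two of the boundary legs and reflects each of the three vertices, introducing a sign $(-1)^{3} = -1$ because the trivalent vertex is skew-symmetric (second relation in \cref{chess}). Hence $\trimor$, viewed as $\go^{\otimes 3}\to\one$, lies in the sign representation of $S_{3}$; interchanging two of its legs costs $-1$, which is precisely $\trimor \circ \crossmor = -\trimor$. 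Combined with the previous paragraph, $\mergemor \circ \dotcross = \trimor$, and substituting into $\trimor = \mergemor - \mergemor \circ \dotcross$ yields $\trimor = \mergemor - \trimor$, i.e. $\trimor = \tfrac12 \mergemor$, which is \cref{triangle}.

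The step I expect to be the real obstacle is $\trimor \circ \crossmor = -\trimor$. Without it the computation is circular: \cref{magic} together with the $\Rot$- and $\Switch$-relations of \cref{flick} and \cref{rotary} only relate $\jail$, $\hourglass$, $\crossmor$, $\Hmor$, $\Imor$ and $\dotcross$ to one another and, fed back into $\trimor = \mergemor \circ \Hmor$, produce nothing but the tautology $\trimor = \trimor$. The genuinely new input is the ambient $D_3$-symmetry of the triangle diagram, and the care required is in tracking the signs coming from that symmetry in tandem with the signs already present in \cref{chess} and \cref{flick}.
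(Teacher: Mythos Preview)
Your proof is correct and takes the same route as the paper: compose \cref{magic} on top with $\mergemor$, identify $\mergemor\circ\Imor=\mergemor$ via \cref{chess} and $\mergemor\circ\Hmor=\trimor$, and then pin down $\mergemor\circ\dotcross$. Your $D_3$-symmetry argument for $\trimor\circ\crossmor=-\trimor$ (equivalently $\mergemor\circ\dotcross=\trimor$) is a valid unpacking of the paper's terse ``using \cref{chess}, \cref{turvy}'': the pivotal relations in \cref{turvy} supply the cyclic invariance of the triangle, and the skew-symmetry of the vertex in \cref{chess} supplies the sign $(-1)^3$ under reflection.
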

\begin{proof}
    Relation \cref{triangle} follows by composing \cref{magic} on the top with $\mergemor$, then using \cref{chess,turvy}.
\end{proof}
\begin{lem} \label{SUP}
	Assume $\delta\notin \{-2,0,3\}$. If $\cI$ is a tensor ideal of $\Tcat_{\delta}$  such that $\Tcat_\delta/\cI$ is nontrivial 
and such that \cref{magic} holds in $\Tcat_{\delta}/\cI$,	then  the morphisms
\begin{equation} \label{bigfive}
		\jail\, ,\quad \hourglass\, ,\quad \crossmor\, ,\quad \Hmor\, ,\quad \Imor
\end{equation}
	are linearly independent in $(\Tcat_\delta/\cI)(\go^{\otimes 2}, \go^{\otimes 2})$.
\end{lem}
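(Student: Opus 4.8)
The plan is to work with the categorical trace and show that the relevant Gram matrix is invertible. First, by \cref{spinner} the relation \cref{triangle} holds in $\Tcat_\delta/\cI$; together with the defining relations of $\Tcat_\delta$ this is enough to evaluate each of the (finitely many, small) closed diagrams arising below to a scalar multiple of $1_\one$: bubbles evaluate to $\delta$ and $\mergemor\circ\splitmor=\alpha\,\id_\go$ by \cref{chess}, any cup or cap attached to a single trivalent vertex vanishes by \cref{teardrop}, triangular faces are absorbed via \cref{triangle}, and crossings adjacent to a vertex are resolved through $\dotcross=\Imor-\Hmor$. Since $\Tcat_\delta/\cI$ is nontrivial, $1_\one\neq 0$, so these scalars are honest elements of $\kk$; in particular the closure map $\tr$ and the symmetric bilinear form $\langle f,g\rangle:=\tr(f\circ g)$ on $(\Tcat_\delta/\cI)(\go^{\otimes 2},\go^{\otimes 2})$ are well defined.

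Next I would compute the Gram matrix $G=\bigl(\langle m_i,m_j\rangle\bigr)$ of the ordered tuple $(m_1,\dots,m_5)=\bigl(\jail,\hourglass,\crossmor,\Hmor,\Imor\bigr)$ from \cref{bigfive}, reading each entry off as a closed web. The entries with no trivalent vertices are immediate (e.g.\ $\langle\jail,\jail\rangle=\delta^2$, $\langle\jail,\hourglass\rangle=\delta$); for the rest one uses $\capmor\circ\splitmor=0$, $\capmor\circ\Hmor=\alpha\,\capmor$ (as in the proof of \cref{sumer}), the rotation $\Hmor\circ\splitmor=\tfrac12\splitmor$ of \cref{triangle} (which yields the ``tetrahedron'' value $\langle\Hmor,\Imor\rangle=\tfrac12\delta$), the idempotence $\Imor\circ\Imor=\Imor$ coming from $\mergemor\circ\splitmor=\id_\go$, and \cref{flick} together with $\dotcross=\Imor-\Hmor$ to evaluate the entries containing a crossing. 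With $\alpha=1$ this gives $G=\delta\,M$, where
\[
M=\begin{pmatrix}
\delta & 1 & 1 & 0 & 1\\
1 & \delta & 1 & 1 & 0\\
1 & 1 & \delta & -1 & -1\\
0 & 1 & -1 & 1 & \tfrac12\\
1 & 0 & -1 & \tfrac12 & 1
\end{pmatrix},
\]
and a direct computation (or the SageMath notebook) gives $\det M=\tfrac34(\delta+2)(\delta-3)^2$, so that $\det G=\tfrac34\,\delta^5(\delta+2)(\delta-3)^2$ is a nonzero element of $\kk$ precisely when $\delta\notin\{-2,0,3\}$.

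Finally, if $\sum_i c_i m_i=0$ in $(\Tcat_\delta/\cI)(\go^{\otimes 2},\go^{\otimes 2})$, then pairing with each $m_j$ gives $\sum_i\langle m_j,m_i\rangle c_i=0$, i.e.\ $G\vec c=0$; since $1_\one\neq 0$ and $\det G\neq 0$, we get $\vec c=0$, so the morphisms \cref{bigfive} are linearly independent. I expect the main obstacle to be the second step: evaluating the mixed closed webs correctly — especially the tetrahedron $\langle\Hmor,\Imor\rangle$ and the entries containing a crossing — while tracking the signs forced by the skew-symmetry of the trivalent vertex, and making sure each reduction uses only relations available in $\Tcat_\delta/\cI$ (namely \cref{magic}, \cref{triangle} and the defining relations, but \emph{not} the square or pentagon relations of \cref{Fdef}). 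The determinant evaluation afterwards is routine.
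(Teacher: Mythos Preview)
Your proposal is correct and is essentially the paper's own proof: the paper also computes the Gram matrix of the five morphisms with respect to the trace pairing (presented there as the closed two-box diagram, which in this pivotal setting coincides with your $\tr(f\circ g)$), obtains exactly the matrix $\delta M$ you wrote down, and concludes from $\det G=\tfrac34\,\delta^5(\delta-3)^2(\delta+2)\,1_\one$. Your extra care in spelling out which relations (only \cref{chess}, \cref{teardrop}, \cref{magic}, \cref{triangle}) are used to evaluate the closed webs, and in noting that nontriviality gives $1_\one\neq 0$, is a welcome addition.
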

\begin{proof}
		Let $b\colon \Tcat_\delta/\cI(\go^{\otimes 2},\go^{\otimes 2})\times \Tcat_{\delta}/\cI(\go^{\otimes 2},\go^{\otimes 2})\to \C$ be the symmetric bilinear form such that $b(f,g)$ is the value, in $\C$, of the closed diagram
\[\begin{tikzpicture}[anchorbase]
	\draw (-0.4,0) -- (0.0,0) -- (0.0,0.4) -- (-0.4,0.4) -- (-0.4,0);
	\draw (0.6,0) -- (1,0) -- (1,0.4) -- (0.6,0.4) -- (0.6,0);
	\draw[out=90,in =90,looseness=1] (-0.1,0.4) to (0.7,0.4);
	\draw[out=90,in =90,looseness=1] (-0.3,0.4) to (0.9,0.4);
	\draw[out=-90,in =-90,looseness=1] (-0.1,0) to (0.7,0);
	\draw[out=-90,in =-90,looseness=1] (-0.3,0) to (0.9,0);
	\node at (-0.2,0.2) {$\scriptstyle  f$};
	\node at (0.8,0.2) {$\scriptstyle  g$};	
\end{tikzpicture}\]
for $f,g \in \Tcat_\delta/\cI(\go^{\otimes 2},\go^{\otimes 2})$.

We compute the Gram matrix of $b$ with respect to the elements \cref{bigfive} using the relations in $\Tcat_{\delta}/\cI$ to get  \[\begin{pmatrix}
	\delta^2& \delta &\delta &0 &\delta \\
\delta	&\delta^2 &\delta &\delta & 0\\
\delta	&\delta &\delta^2 & -\delta& -\delta\\
0	&\delta & -\delta& \delta& \frac{1}{2} \delta\\
\delta	&0 & -\delta&\frac{1}{2} \delta & \delta 
\end{pmatrix}.\]
(An example of these computations is given in the proof of \cref{nugget}.) The determinant of this matrix is $\frac{3}{4} \delta^5 (\delta - 3)^2 (\delta + 2)1_\one$, and so the claim follows.
\end{proof}

\begin{lem} \label{sqexplode}
		Assume $\delta\notin \{-2,0,3\}$. If $\cI$ is a tensor ideal of $\Tcat_{\delta}$  such that
	\begin{equation} \label{funf}
		\dim \left( (\Tcat_\delta/\cI)(\go^{\otimes 2}, \go^{\otimes 2}) \right) = 5
	\end{equation}
	and such that \cref{magic} holds in $\Tcat_{\delta}/\cI$, then the relation \cref{sqburst} holds in $\Tcat_\delta/\cI$.
\end{lem}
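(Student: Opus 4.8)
The plan is to view $\sqmor$ as an element of $(\Tcat_\delta/\cI)(\go^{\otimes 2},\go^{\otimes 2})$ and expand it in a convenient basis. Since $\delta\notin\{-2,0,3\}$, the quotient $\Tcat_\delta/\cI$ is nontrivial, and \cref{magic} holds, \cref{SUP} applies: the five morphisms \cref{bigfive} are linearly independent, hence (by \cref{funf}) form a basis of this $5$-dimensional space. So there are unique scalars with
\[
\sqmor=a_1\,\jail+a_2\,\hourglass+a_3\,\crossmor+a_4\,\Hmor+a_5\,\Imor ,
\]
and it remains to compute them and match with \cref{sqburst}.

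First I would cut down the number of unknowns using the rotational symmetry of the square. A direct check, using the skew-symmetry of the trivalent vertices, shows that the quarter-turn rotation fixes the square diagram, so $\Rot(\sqmor)=\sqmor$; applying $\Rot$ to the displayed expansion and using \cref{rotary} forces $a_1=a_2$ and $a_4=a_5$. Thus
\[
\sqmor=a_1\bigl(\jail+\hourglass\bigr)+a_3\,\crossmor+a_4\bigl(\Hmor+\Imor\bigr),
\]
and only $a_1,a_3,a_4$ remain. (The presence of $\crossmor$ in \cref{sqburst} already rules out the alternative sign $\Rot(\sqmor)=-\sqmor$.)

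To determine $a_1,a_3,a_4$ I would pair $\sqmor$ against the basis \cref{bigfive} using the symmetric bilinear form $b$ from the proof of \cref{SUP}. Its Gram matrix $G$ on that basis was computed there and has determinant $\tfrac34\delta^5(\delta-3)^2(\delta+2)\neq 0$; moreover $b$ is $\Rot$-invariant, so it is block diagonal with respect to the $\pm1$-eigenspace decomposition of $\Rot$, hence nondegenerate on the $3$-dimensional $\Rot$-fixed subspace in which $\sqmor$ lies. Consequently $(a_1,a_1,a_3,a_4,a_4)=G^{-1}\vec p$, where $\vec p$ records the pairings of $\sqmor$ with \cref{bigfive} and, by $\Rot$-invariance, has only the three distinct entries $b(\sqmor,\jail)$, $b(\sqmor,\crossmor)$, $b(\sqmor,\Hmor)$. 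Each of these is the value in $\C$ of a closed trivalent graph built by gluing $\sqmor$ to a basis element, which I would evaluate by repeatedly resolving dots and crossings with \cref{magic}, reducing triangles by \cref{triangle} (from \cref{spinner}), and eliminating bubbles and $\triform/\explode$ configurations via \cref{chess,topsy,teardrop}. Plugging the three resulting polynomials in $\delta$ into $G^{-1}\vec p$ and simplifying should give $a_1=a_3=\tfrac{5}{6(\delta+2)}$ and $a_4=\tfrac16$, which is exactly \cref{sqburst}. (Equivalently, the three needed linear equations can be obtained by composing the basis expansion of $\sqmor$ on top with $\capmor$ and with $\mergemor$ and using \cref{chess,teardrop,topsy}.)

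The main obstacle is the explicit evaluation of those three closed trivalent graphs: these are lengthy graph-reduction computations — the reason such steps are handled with SageMath (see the appendix) — of exactly the type carried out in the proof of \cref{nugget}.
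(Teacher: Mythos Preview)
Your approach is valid but heavier than the paper's. After the same first step (invoking \cref{SUP} and $\Rot$-invariance to write $\sqmor=a\bigl(\jail+\hourglass\bigr)+b\bigl(\Hmor+\Imor\bigr)+c\,\crossmor$), the paper does \emph{not} pair against the basis via the Gram matrix. Instead it exploits $\Switch$: it computes the diagram $\Switch(\sqmor)$ in two ways---once by applying $\Switch$ term-by-term to the expansion (using \cref{flick} and then \cref{magic} to eliminate $\dotcross$), and once by recognising the switched square directly as $\sqmor-\tfrac12\Imor$ via \cref{turvy}, \cref{magic} and \cref{triangle}. Comparing coefficients in the basis gives $b=\tfrac16$ and $a=c$ with no closed-diagram evaluation at all; a single cap on top then yields $a=c=\tfrac{5}{6(\delta+2)}$. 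This completely sidesteps the ``main obstacle'' you identify.

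Your Gram-matrix route would certainly work, but note that your parenthetical alternative (compose on top with $\capmor$ and with $\mergemor$) only produces \emph{two} scalar equations---both target hom-spaces $(\go^{\otimes 2},\one)$ and $(\go^{\otimes 2},\go)$ are one-dimensional---so by itself it cannot determine all three of $a_1,a_3,a_4$; you would still need a third relation, and the paper's $\Switch$ argument is exactly the missing ingredient.
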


\begin{proof}
    By the assumption \cref{funf} and \cref{SUP}, the morphisms \cref{bigfive} form a basis of $(\Tcat_\delta/\cI)(\go^{\otimes 2},\go^{\otimes 2})$. By \cref{spinner}, the relation \cref{triangle} also holds in $\Tcat_{\delta}/\cI$.  Since $\sqmor$ is invariant under $\Rot$, we must have a relation in $\Tcat_\delta/\cI$ of the form
\begin{equation} \label{sqbreak1}
        \sqmor =
        a\left(\, \jail + \hourglass \, \right)
        + b \left(\, \Hmor + \Imor\, \right)
        + c\, \crossmor\ .
\end{equation}
To this relation, we apply the operator $\Switch$ to get   \begin{multline}\label{dancer}
	\begin{tikzpicture}[centerzero]
	\draw (-0.15,-0.15) rectangle (0.15,0.15);
	\draw (0.15,-0.45) -- (-0.15,-0.15);
	\draw (-0.15,-0.45) -- (0.15,-0.15);
	\draw (-0.15,0.3) -- (-0.15,0.15);
	\draw (0.15,0.3) -- (0.15,0.15);
\end{tikzpicture} = \, a\left(\, \crossmor + \hourglass \, \right)
+ b \left(\, -\dotcross -\Imor\, \right)
+ c\, \jail\ \overset{\cref{magic}}{=} a\left(\, \crossmor + \hourglass \, \right)
-b \left(\,2\, \Imor -\Hmor\, \right)
+ c\, \jail\, .
\end{multline}
On the other hand, we have  
\[
	\begin{tikzpicture}[baseline=-4]
	\draw (-0.15,-0.15) rectangle (0.15,0.15);
	\draw (0.15,-0.45) -- (-0.15,-0.15);
	\draw (-0.15,-0.45) -- (0.15,-0.15);
	\draw (-0.15,0.3) -- (-0.15,0.15);
	\draw (0.15,0.3) -- (0.15,0.15);
\end{tikzpicture} \;\overset{\cref{turvy}}{=}\; -\,\begin{tikzpicture}[baseline=0]
	\draw (0.2,-0.2) -- (-0.2,0.2);
	\draw (-0.2,-0.2) -- (0.2,0.2);
	\draw[fill] (-0.08,0.08) -- (-0.08,-0.08)--(0,0);
	\draw (-0.2,0.2) -- (0.2,0.2);
	\draw (-0.2,0.2) -- (-0.2,0.4);
	\draw (0.2,0.2) -- (0.2,0.4);
\end{tikzpicture} \;\; \overset{\mathclap{\cref{magic}}}{\underset{\mathclap{\cref{triangle}}}{=}}\;\, \sqmor - \frac{1}{2}\Imor
\;\overset{\cref{sqbreak1}}{=} a\left(\, \jail + \hourglass \, \right)
+ b  \, \Hmor  
+ c\, \crossmor \,+ \left(b-\frac{1}{2}\right)\, \Imor\, .
 \]
Comparing this to \cref{dancer} and using the fact that the morphisms \cref{bigfive} are linearly independent, we see that $\displaystyle b= \frac{1}{6}$ and $a=c$. Now, attaching a cap on the top of \cref{sqbreak1} and using \cref{chess}, we get $\displaystyle a=c=\frac{5}{6(\delta +2)}$ as we want.
\end{proof}
\begin{lem}\label{nugget}
 If $\cI$ is a tensor ideal of $\Tcat_\delta$ such that $\Tcat_{\delta}/\cI$ is nontrivial and such that $\delta\notin \{-2,0,3,\frac{15\pm 3\sqrt{5}}{4} \}$,  then the elements 
\begin{equation}\label{particles}					\begin{tikzpicture}[anchorbase]
				\draw (-0.2,0) -- (0,0.25) -- (0.2,0);
				\draw (0,0.25) -- (0,0.4);
				\draw (-0.2,-0.25) -- (-0.2,0) -- (-0.3,0.4);
				\draw (0.2,-0.25) -- (0.2,0) -- (0.3,0.4);
			\end{tikzpicture}
		\,	, \;
			\begin{tikzpicture}[anchorbase]
				\draw (-0.3,0.3) -- (0,0) -- (0.3,0.3);
				\draw (0,0.3) -- (-0.15,0.15);
				\draw (0,0) -- (0,-0.15) -- (-0.15,-0.3);
				\draw (0,-0.15) -- (0.15,-0.3);
			\end{tikzpicture}
			\, ,\; 
			\begin{tikzpicture}[anchorbase]
				\draw (0.3,0.3) -- (0,0) -- (-0.3,0.3);
				\draw (0,0.3) -- (0.15,0.15);
				\draw (0,0) -- (0,-0.15) -- (0.15,-0.3);
				\draw (0,-0.15) -- (-0.15,-0.3);
			\end{tikzpicture}
		\,	, \; 
			\begin{tikzpicture}[anchorbase]
				\draw (-0.3,-0.3) -- (0.3,0.3);
				\draw (-0.3,0.3) -- (-0.15,-0.15);
				\draw (0,0.3) -- (0.15,0.15);
				\draw (0,0) -- (0.3,-0.3);
			\end{tikzpicture}
		\,	, \; 
			\begin{tikzpicture}[anchorbase]
				\draw (0.3,-0.3) -- (-0.3,0.3);
				\draw (0.3,0.3) -- (0.15,-0.15);
				\draw (0,0.3) -- (-0.15,0.15);
				\draw (0,0) -- (-0.3,-0.3);
			\end{tikzpicture}
		\,	, \; 
			\begin{tikzpicture}[centerzero]
				\draw (-0.15,-0.3) -- (-0.15,-0.23) arc(180:0:0.15) -- (0.15,-0.3);
				\draw (-0.3,0.3) -- (0,0.08) -- (0.3,0.3);
				\draw (0,0.3) -- (0,0.08);
			\end{tikzpicture}
		\,	, \;
			\begin{tikzpicture}[centerzero]
				\draw (-0.2,-0.3) -- (-0.2,0.3);
				\draw (0,0.3) -- (0.15,0) -- (0.3,0.3);
				\draw (0.15,0) -- (0.15,-0.3);
			\end{tikzpicture}
		\,	, \;
			\begin{tikzpicture}[centerzero]
				\draw (0.2,-0.3) -- (0.2,0.3);
				\draw (0,0.3) -- (-0.15,0) -- (-0.3,0.3);
				\draw (-0.15,0) -- (-0.15,-0.3);
			\end{tikzpicture}
		\,	, \;
			\begin{tikzpicture}[centerzero]
				\draw (-0.3,0.3) -- (-0.3,0.23) arc(180:360:0.15) -- (0,0.3);
				\draw (0.3,0.3) -- (0.15,0) -- (-0.2,-0.3);
				\draw (0.2,-0.3) -- (0.15,0);
			\end{tikzpicture}
		\,	, \;
			\begin{tikzpicture}[centerzero]
				\draw (0.3,0.3) -- (0.3,0.23) arc(360:180:0.15) -- (0,0.3);
				\draw (-0.3,0.3) -- (-0.15,0) -- (0.2,-0.3);
				\draw (-0.2,-0.3) -- (-0.15,0);
			\end{tikzpicture}
		\, ,\;
				\begin{tikzpicture}[centerzero]
				\draw (0,0.3) -- (0,-0.15) -- (-0.15,-0.3);
				\draw (0,-0.15) -- (0.15,-0.3);
				\draw (-0.2,0.3) -- (-0.2,0.25) arc(180:360:0.2) -- (0.2,0.3);
			\end{tikzpicture}
		\,	, \;
			\begin{tikzpicture}[centerzero]
				\draw (0,0.3) to[out=-45,in=70] (0.15,-0.3);
				\draw (-0.3,0.3) -- (0,0) -- (0.3,0.3);
				\draw (0,0) -- (-0.15,-0.3);
			\end{tikzpicture}
		\,	, \;
			\begin{tikzpicture}[centerzero]
				\draw (0,0.3) to[out=225,in=110] (-0.15,-0.3);
				\draw (0.3,0.3) -- (0,0) -- (-0.3,0.3);
				\draw (0,0) -- (0.15,-0.3);
			\end{tikzpicture}
	\,	, \;
			\begin{tikzpicture}[centerzero]
				\draw (-0.3,0.3) -- (-0.15,0.15) -- (0,0.3);
				\draw (-0.15,0.15) -- (0.15,-0.3);
				\draw (0.3,0.3) -- (-0.15,-0.3);
			\end{tikzpicture}
		\,	, \;
			\begin{tikzpicture}[centerzero]
				\draw (0.3,0.3) -- (0.15,0.15) -- (0,0.3);
				\draw (0.15,0.15) -- (-0.15,-0.3);
				\draw (-0.3,0.3) -- (0.15,-0.3);
			\end{tikzpicture}
	\,	, \;
		\begin{tikzpicture}[anchorbase]
				\draw (-0.2,0) -- (0,0.25) -- (0.2,0);
				\draw (0,0.25) -- (0,0.4);
				\draw  (-0.2,0) -- (-0.3,0.4);
				\draw (0.2,0) -- (0.3,0.4);
				\draw (0.2,0) -- (-0.3,-0.2);
				\draw  (-0.2,0) -- (0.3,-0.2);
			\end{tikzpicture}
\end{equation}
are linearly independent in  $\Tcat_\delta/\cI(\go^{\otimes 2},\go^{\otimes 3})$.
\end{lem}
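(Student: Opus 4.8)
The plan is to follow the strategy used in the proof of \cref{SUP}, now for morphism spaces of the form $\go^{\otimes 2}\to\go^{\otimes 3}$. Since $\go$ is self-dual and $\Tcat_\delta$ is pivotal, there is a canonical isomorphism $(\Tcat_\delta/\cI)(\go^{\otimes 2},\go^{\otimes 3})\cong(\Tcat_\delta/\cI)(\one,\go^{\otimes 5})$, and I would introduce the symmetric bilinear form $b$ on this space for which $b(f,g)$ is the scalar obtained by closing $f$ against $g$, i.e.\ by joining the two bottom legs and the three top legs of $f$ to the corresponding legs of $g$ with nested cups and caps; this is the exact analogue of the closed diagram defining $b$ in \cref{SUP}, with five strands in place of four. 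Because $\Tcat_\delta/\cI$ is assumed nontrivial and $\kk$ is a field, $\End_{\Tcat_\delta/\cI}(\one)=\kk\cdot 1_\one\cong\kk$, so $b$ takes values in $\kk$; were the quotient trivial the whole morphism space would vanish and there would be nothing to prove. As usual, if the Gram matrix $G$ of the sixteen diagrams in \cref{particles} with respect to $b$ is invertible over $\kk$, then the images of these morphisms in $\Tcat_\delta/\cI$ are linearly independent, which is exactly the claim.

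To compute $G$ I would use that \cref{magic} holds in $\Tcat_\delta/\cI$ by hypothesis, hence so does the triangle relation \cref{triangle} by \cref{spinner}. Combining \cref{magic}, \cref{triangle}, and the relations in \cref{chess} (the bigon identity, skew-symmetry of the trivalent vertex, the $\alpha=1$ relation, and the circle evaluation $\bubble=\delta\cdot 1_\one$) with the symmetric and pivotal structure to absorb crossings, any closed trivalent graph occurring as an entry $b(f,g)$ reduces to a polynomial in $\delta$ times $1_\one$: each diagram in \cref{particles} has at most three trivalent vertices, so each closed graph $b(f,g)$ has at most six, and such graphs always contain a bigon or a triangle once crossings have been removed. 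In the write-up I would carry out one representative entry in full diagrammatic detail — for instance $b$ applied to the first two diagrams of \cref{particles}, as promised in the proof of \cref{SUP} — and then record the remaining entries, computed with SageMath, as an explicit $16\times 16$ matrix whose entries are polynomials in $\delta$.

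It then remains to observe that $\det G$, as a polynomial in $\delta$, equals a nonzero constant times a product of powers of $\delta$, $\delta-3$, $\delta+2$, and $4\delta^{2}-30\delta+45$; since the roots of the last factor are $\tfrac{15\pm3\sqrt5}{4}$, this gives $\det G\neq 0$ whenever $\delta\notin\{-2,0,3,\tfrac{15\pm3\sqrt5}{4}\}$. Hence under the hypotheses of the lemma $G$ is invertible over $\kk$, and the sixteen morphisms in \cref{particles} are linearly independent in $\Tcat_\delta/\cI$.

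The obstacle here is computational rather than conceptual: one has to reduce well over a hundred closed trivalent graphs to scalars and then factor the resulting polynomial in $\delta$, which is why most of the work is delegated to SageMath. The only point that calls for a little care is checking that \cref{magic}, \cref{triangle}, and the relations of \cref{chess} genuinely suffice to evaluate every closed graph that appears — equivalently, that no rigid closed subgraph (a tetrahedron, or anything larger and irreducible) survives the reduction — and this one verifies directly, since after the symmetric structure is used to remove crossings every graph in sight is small enough to contain a reducible bigon or triangle.
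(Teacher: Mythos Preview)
Your approach is essentially the paper's: define the trace pairing $b$ on $(\Tcat_\delta/\cI)(\go^{\otimes 2},\go^{\otimes 3})$, compute the $16\times16$ Gram matrix of the diagrams in \cref{particles}, and show its determinant is nonzero under the stated restrictions on $\delta$. The paper even presents the same illustrative entry computations and defers the rest to SageMath.

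Two small corrections. First, you write that \cref{magic} holds ``by hypothesis,'' but the lemma as stated does not assume it; the paper's own proof nonetheless invokes \cref{triangle} (a consequence of \cref{magic}) in the sample computations, so this is really a missing hypothesis in the lemma statement rather than an error on your part---just don't claim it is in the hypotheses. Second, the determinant the paper obtains is $\tfrac{125}{4096}\,\delta^{16}(\delta-3)^8(4\delta^2-30\delta+45)$, with no factor of $\delta+2$; the exclusion $\delta\neq -2$ is a standing assumption of the paper rather than a root of the Gram determinant.
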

\begin{proof}
	Let $b\colon \Tcat_\delta/\cI(\go^{\otimes 2},\go^{\otimes 3})\times \Tcat_{\delta}/\cI(\go^{\otimes 2},\go^{\otimes 3})\to \kk$ be the symmetric bilinear form such that $b(f,g)$ is the value, in $\kk$, of the closed diagram
\[
\begin{tikzpicture}[centerzero] 
\draw (-0.6,0) -- (0.0,0) -- (0.0,0.4) -- (-0.6,0.4) -- (-0.6,0);
\draw (0.6,0) -- (1.2,0) -- (1.2,0.4) -- (0.6,0.4) -- (0.6,0);
\draw[out=90,in =90,looseness=1] (-0.45,0.4) to (1.05,0.4);
\draw[out=90,in =90,looseness=1] (-0.15,0.4) to (0.75,0.4);
\draw[out=90,in =90,looseness=1] (-0.3,0.4) to (0.9,0.4);
\draw[out=-90,in =-90,looseness=1] (-0.45,0) to (1.05,0);
\draw[out=-90,in =-90,looseness=1] (-0.15,0) to (0.75,0);
\node at (-0.3,0.2) {$\scriptstyle  f$};
\node at (0.9,0.2) {$\scriptstyle  g$};	
\end{tikzpicture}
\]
for $f,g \in \Tcat_\delta/\cI(\go^{\otimes 2},\go^{\otimes 3})$. For example 
\begin{gather*}
	b\left(\begin{tikzpicture}[anchorbase]
		\draw (-0.2,0) -- (0,0.25) -- (0.2,0);
		\draw (0,0.25) -- (0,0.4);
		\draw (-0.2,-0.25) -- (-0.2,0) -- (-0.3,0.4);
		\draw (0.2,-0.25) -- (0.2,0) -- (0.3,0.4);
	\end{tikzpicture}, \begin{tikzpicture}[anchorbase]
		\draw (-0.2,0) -- (0,0.25) -- (0.2,0);
		\draw (0,0.25) -- (0,0.4);
		\draw (-0.2,-0.25) -- (-0.2,0) -- (-0.3,0.4);
		\draw (0.2,-0.25) -- (0.2,0) -- (0.3,0.4);
	\end{tikzpicture}\right)=\begin{tikzpicture}[anchorbase,scale=0.3]
		\draw[out=90,in=90, looseness=1.6] (-1,0) to (2,0);
		\draw[out=90,in=90,looseness=1.6] (0,0) to (1,0);
		\draw[out=90,in=90,looseness=1.6] (-0.5,0) to (1.5,0);
		\draw[out=-90,in=-90, looseness=1.6] (-1,0) to (2,0);
		\draw[out=-90,in=-90,looseness=1.6] (0,0) to (1,0);
		\draw (-1,0) -- (0,0);
		\draw (1,0) -- (2,0);
	\end{tikzpicture}\overset{\cref{chess}}{=} \begin{tikzpicture}[anchorbase,scale=0.3]
		\draw[out=90,in=90, looseness=1.6] (-1,0) to (2,0);
		\draw[out=90,in=90,looseness=1.6] (0,0) to (1,0);
		\draw[out=-90,in=-90, looseness=1.6] (-1,0) to (2,0);
		\draw[out=-90,in=-90,looseness=1.6] (0,0) to (1,0);
		\draw (-1,0) -- (0,0);
		\draw (1,0) -- (2,0);
	\end{tikzpicture}\overset{\cref{chess}}{=} \delta,
\\
b\left(\begin{tikzpicture}[anchorbase]
		\draw (-0.2,0) -- (0,0.25) -- (0.2,0);
		\draw (0,0.25) -- (0,0.4);
		\draw (-0.2,-0.25) -- (-0.2,0) -- (-0.3,0.4);
		\draw (0.2,-0.25) -- (0.2,0) -- (0.3,0.4);
	\end{tikzpicture},\begin{tikzpicture}[anchorbase]
		\draw (-0.3,0.3) -- (0,0) -- (0.3,0.3);
		\draw (0,0.3) -- (-0.15,0.15);
		\draw (0,0) -- (0,-0.15) -- (-0.15,-0.3);
		\draw (0,-0.15) -- (0.15,-0.3);
	\end{tikzpicture}\right)=\begin{tikzpicture}[anchorbase,scale=0.3]
		\draw[out=90,in=90, looseness=1.6] (-1,0) to (2,0);
		\draw[out=90,in=90,looseness=1.6] (0,0) to (1,0);
		\draw[out=-90,in=-90, looseness=1.6] (-1,0) to (2,0);
		\draw  (0,0) to (1,0);
		\draw (-1,0) -- (0,0);
		\draw (1,0) -- (2,0);
		\draw (0.5,0) -- (0.5,-1.4);
	\end{tikzpicture}\overset{\cref{triangle}}{=}\frac{1}{2}\; \begin{tikzpicture}[anchorbase,scale=0.3]
		\draw[out=90,in=90, looseness=1.6] (-1,0) to (2,0);
		\draw[out=-90,in=-90, looseness=1.6] (-1,0) to (2,0);
		\draw  (0,0) to (1,0);
		\draw (-1,0) -- (0,0);
		\draw (1,0) -- (2,0);
		\draw (0.5,0) -- (0.5,-1.4);
	\end{tikzpicture}\overset{\cref{triangle}}{=}\frac{1}{4}\; \begin{tikzpicture}[anchorbase,scale=0.3]
		\draw[out=90,in=90, looseness=1.6] (-1,0) to (2,0);
		\draw[out=-90,in=-90, looseness=1.6] (-1,0) to (2,0);
		\draw  (0,0) to (1,0);
		\draw (-1,0) -- (0,0);
		\draw (1,0) -- (2,0);
	\end{tikzpicture}\overset{\cref{chess}}{=}\frac{1}{4}\delta,
	\\
	b\left(\begin{tikzpicture}[anchorbase]
		\draw (-0.2,0) -- (0,0.25) -- (0.2,0);
		\draw (0,0.25) -- (0,0.4);
		\draw (-0.2,-0.25) -- (-0.2,0) -- (-0.3,0.4);
		\draw (0.2,-0.25) -- (0.2,0) -- (0.3,0.4);
	\end{tikzpicture}, \begin{tikzpicture}[anchorbase]
		\draw (0.3,0.3) -- (0,0) -- (-0.3,0.3);
		\draw (0,0.3) -- (0.15,0.15);
		\draw (0,0) -- (0,-0.15) -- (0.15,-0.3);
		\draw (0,-0.15) -- (-0.15,-0.3);
	\end{tikzpicture}\right)= \begin{tikzpicture}[anchorbase,scale=.2]
		\draw  (0,0) ellipse (2 and 2);
		\draw (0,-2) -- (0,2);
		\draw (0,0) -- (-2,0);
		\draw (-1,0) -- (-1,1.733);
	\end{tikzpicture}\overset{\cref{triangle}}{=}\frac{1}{2}\; \begin{tikzpicture}[anchorbase,scale=.2]
		\draw  (0,0) ellipse (2 and 2);
		\draw (0,-2) -- (0,2);
		\draw (0,0) -- (-2,0);
	\end{tikzpicture}=\frac{1}{4}\delta, \quad \text{and} 
\\ b\left(\begin{tikzpicture}[anchorbase]
	\draw (-0.2,0) -- (0,0.25) -- (0.2,0);
	\draw (0,0.25) -- (0,0.4);
	\draw (-0.2,-0.25) -- (-0.2,0) -- (-0.3,0.4);
	\draw (0.2,-0.25) -- (0.2,0) -- (0.3,0.4);
\end{tikzpicture},	\begin{tikzpicture}[anchorbase]
	\draw (-0.3,-0.3) -- (0.3,0.3);
	\draw (-0.3,0.3) -- (-0.15,-0.15);
	\draw (0,0.3) -- (0.15,0.15);
	\draw (0,0) -- (0.3,-0.3);
\end{tikzpicture}\right)= \begin{tikzpicture}[anchorbase,scale=0.3]
	\draw[out=90,in=90, looseness=1.6] (-1,0) to (2,0);
	\draw[out=90,in=90,looseness=1.6] (0,0) to (1,0);
	\draw[out=-90,in=-90, looseness=1.6] (-1,0) to (2,0);
	\draw[out=-90,in=-90,looseness=1.6] (0,0) to (1,0);
	\draw (-1,0) -- (0,0);
	\draw (1,0) -- (2,0);
	\draw (-0.5,0) -- (-0.5,1.1);
\end{tikzpicture}=\frac{1}{2}\delta.
\end{gather*}

These are the first four values in the first row of the  Gram matrix of $b$ with respect to the vectors in \cref{particles}:
\[
\delta \begin{pmatrix}
		1& \quarter & \quarter & \half & \half & \half & 0& 0 & 1 & 1& \half & -1 & -1 & -\half & -\half & 0 \\
		\quarter & 1 & \half & \quarter & \half & 0 & \half & 1 & 0 & 1 & -1 & \half & -\half & -1 & -\half & -\quarter\\
		\quarter & \half & 1 & \half & \quarter & 0 & 1 & \half & 1 & 0 & -1 & -\half & \half & -\half & -1 & -\quarter\\
		\half & \quarter & \half & 1 & \quarter & 1 & 0& 1 &\half & 0 & -\half  & -1 & -\half & \half &  -1 & \quarter\\
		\half & \half & \quarter & \quarter & 1 & 1 & 1 & 0 & 0& \half & -\half & -\half & -1 & -1 & \half & \quarter\\
		\half & 0 & 0 & 1 & 1 & \delta & 1 & 1 & 0 & 0 & 0  & -1 & -1 & 1 & 1 & \half\\
		0 & \half & 1 & 0 & 1 & 1 & \delta & 0 & 1 & 0 & -1 & 0 & 1 & -1 & 1 & -1\\
		0 & 1 & \half & 1 & 0 & 1 & 0 & \delta & 0 & 1 & -1 & 1 & 0 & 1 & -1 & -\half\\
		1 & 0 & 1 & \half & 0 & 0& 1 & 0 & \delta & 1 & 1 & -1& 1 & 0& -1 & -1 \\
		1 & 1 & 0& 0& \half & 0& 0& 1 & 1 & \delta& 1 & 1 & -1 & -1& 0& -1\\
		\half & -1 & -1 & -\half & -\half & 0 & -1 & -1 &  1 & 1 & \delta & 0 & 0 & 1 & 1 & -\half\\
		-1 & \half & -\half & -1 & -\half & -1 & 0& 1 & -1 & 1 & 0 & \delta & 1 & 0 & 1 & -1\\
		-1 & -\half & \half & -\half & -1 & -1 & 1 & 0 & 1 & -1 & 0& 1& \delta & 1 & 0& -1\\
		-\half & -1 & -\half & \half & -1 & 1 & -1 & 1 & 0 & -1 & 1 & 0 & 1 & \delta & 0 & 0\\
		-\half& -\half & -1 & -1 & \half & 1 & 1 & -1 & -1 & 0 & 1 & 1 & 0 & 0 & \delta & 0\\
		0 & -\quarter & -\quarter & \quarter & \quarter & \half & -1 & -\half & -1 & -1 & -\half & -1 & -1 & 0 & 0& 1
\end{pmatrix}
\]
The determinant of this matrix is the polynomial $ p(\delta)=\frac{125}{4096}\delta^{16}(\delta - 3)^8 (4 \delta^2 - 30 \delta + 45)1_\one$. So the elements \cref{particles}  are linearly independent for any $\delta\neq -2$ and not root of $p$.  
\end{proof}
\begin{rem}\label{fickle}
Here we explain why the functor in \cref{Kauff} is not faithful. From the equivalence of categories between $\TL(-2)$ and $\fsl\md$, see for example \cite{ST09}, we get that $\dim \Kar \TL(-2)\left(\Phi(\go^{\otimes 2}),\Phi(\go^{\otimes 3})\right)=6$ (here $\Phi$ as in \cref{Kauff}).  The rank of the Gram matrix in the proof of \cref{nugget}  for $\delta=3$  is $8$ (see \cref{appendix}), and so $\dim \mathpzc{SL}(\go^{\otimes 2},\go^{\otimes 3})\geq 8$. Since $\dim \Kar \TL(-2)\left(\Phi(\go^{\otimes 2}),\Phi(\go^{\otimes 3})\right)=6$, it follows that the functor in \cref{Kauff} is not faithful. 
An argument similar to the proof in \cref{nugget} shows that the images under the functor $\Phi$, from \cref{Kauff}, of the elements 
\[
\begin{tikzpicture}[anchorbase]
	\draw (-0.2,0) -- (0,0.25) -- (0.2,0);
	\draw (0,0.25) -- (0,0.4);
	\draw (-0.2,-0.25) -- (-0.2,0) -- (-0.3,0.4);
	\draw (0.2,-0.25) -- (0.2,0) -- (0.3,0.4);
\end{tikzpicture}
\,	, \;
\begin{tikzpicture}[anchorbase]
	\draw (-0.3,0.3) -- (0,0) -- (0.3,0.3);
	\draw (0,0.3) -- (-0.15,0.15);
	\draw (0,0) -- (0,-0.15) -- (-0.15,-0.3);
	\draw (0,-0.15) -- (0.15,-0.3);
\end{tikzpicture}
\, ,\; 
\begin{tikzpicture}[anchorbase]
	\draw (0.3,0.3) -- (0,0) -- (-0.3,0.3);
	\draw (0,0.3) -- (0.15,0.15);
	\draw (0,0) -- (0,-0.15) -- (0.15,-0.3);
	\draw (0,-0.15) -- (-0.15,-0.3);
\end{tikzpicture}
\,	, \; 
\begin{tikzpicture}[anchorbase]
	\draw (-0.3,-0.3) -- (0.3,0.3);
	\draw (-0.3,0.3) -- (-0.15,-0.15);
	\draw (0,0.3) -- (0.15,0.15);
	\draw (0,0) -- (0.3,-0.3);
\end{tikzpicture}
\,	, \; 
\begin{tikzpicture}[anchorbase]
	\draw (0.3,-0.3) -- (-0.3,0.3);
	\draw (0.3,0.3) -- (0.15,-0.15);
	\draw (0,0.3) -- (-0.15,0.15);
	\draw (0,0) -- (-0.3,-0.3);
\end{tikzpicture}
\,	, \; 
\begin{tikzpicture}[centerzero]
	\draw (-0.15,-0.3) -- (-0.15,-0.23) arc(180:0:0.15) -- (0.15,-0.3);
	\draw (-0.3,0.3) -- (0,0.08) -- (0.3,0.3);
	\draw (0,0.3) -- (0,0.08);
\end{tikzpicture}
\,
\]
are linearly independent, and hence form a basis of $\Kar\TL(-2)\left(\Phi(\go^{\otimes 2}),\Phi(\go^{\otimes 3})\right)$.  
\end{rem}
\begin{lem} \label{pentexplode}
    If $\delta\notin \{-2, 0, 3, \frac{15\pm 3\sqrt{5}}{4}\}$ and  $\cI$ is a tensor ideal of $\Tcat_\delta$ such that \begin{equation}\label{dim16}
    	\dim\left(\Tcat_{\delta}/\cI\right)\left(\go^{\otimes 2},\go^{\otimes 2}\right)=5 \quad \text{and}\quad 	\dim\left(\Tcat_{\delta}/\cI\right)\left(\go^{\otimes 2},\go^{\otimes 3}\right)=16
    \end{equation} 
and such that \cref{magic} holds, then the relation \cref{pentburst} also holds in $\Tcat_\delta/\cI$.
\end{lem}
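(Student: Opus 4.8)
The plan is to follow the template of the proof of \cref{sqexplode}. First I would collect the facts that are already available. Since $\dim(\Tcat_\delta/\cI)(\go^{\otimes 2},\go^{\otimes 2})=5\neq 0$, the quotient $\Tcat_\delta/\cI$ is nontrivial, and because $\delta\notin\{-2,0,3\}$, \cref{spinner} gives relation \cref{triangle}, \cref{SUP} shows that \cref{bigfive} is a basis of $(\Tcat_\delta/\cI)(\go^{\otimes 2},\go^{\otimes 2})$, and \cref{sqexplode} gives relation \cref{sqburst}. Moreover, the hypothesis $\delta\notin\{-2,0,3,\tfrac{15\pm 3\sqrt5}{4}\}$ is exactly what is needed to invoke \cref{nugget}, so together with the hypothesis $\dim(\Tcat_\delta/\cI)(\go^{\otimes 2},\go^{\otimes 3})=16$ the sixteen diagrams in \cref{particles} form a basis of $(\Tcat_\delta/\cI)(\go^{\otimes 2},\go^{\otimes 3})$. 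Consequently $\pentmor=\sum_{i=1}^{16}a_i x_i$ for unique scalars $a_i$, and the whole task is to identify them.

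Next I would cut down the number of unknowns using the symmetry of the pentagon. Under the pivotal isomorphism $(\Tcat_\delta/\cI)(\go^{\otimes 2},\go^{\otimes 3})\cong(\Tcat_\delta/\cI)(\one,\go^{\otimes 5})$ the five boundary points may be cyclically rotated and reflected; the pentagon diagram is invariant under this dihedral action up to the signs produced by the skew-symmetry of the trivalent vertex (relations \cref{chess}, \cref{topsy}, \cref{turvy}, and the Jacobi relation \cref{magic}). Working out the orbits of the basis \cref{particles} under this action, the four ``caterpillar'' diagrams appearing in the first bracket of \cref{pentburst} form a single orbit with matching signs, the ten one-vertex-plus-cap/cup diagrams in the second bracket form another such orbit, and the two remaining crossed trees turn out to have coefficient $0$. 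This reduces the problem to two scalars, $\pentmor=s\cdot(\text{first bracket of \cref{pentburst}})+t\cdot(\text{second bracket})$.

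To pin down $s$ and $t$ I would contract. Composing $\pentmor$ with $\mergemor$ along two adjacent of the three top legs creates a triangle formed by the two corresponding (adjacent) pentagon vertices and the new vertex; collapsing that triangle by \cref{triangle} turns $\pentmor$ into $\tfrac12$ times a four-valent square, which \cref{sqburst} then expresses in the basis \cref{bigfive}. Performing the same contraction on $s\cdot(\ldots)+t\cdot(\ldots)$ and comparing coefficients in \cref{bigfive} (legitimate by \cref{SUP}) gives linear equations in $s,t$ over the function field in $\delta$. A second, independent contraction — attaching $\capmor$ to two adjacent legs, which by isotopy and the third relation of \cref{chess} collapses a bigon and leaves another triangle, hence $\tfrac12\mergemor$ by \cref{triangle} — supplies the remaining equation. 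Solving yields $s=\tfrac1{12}$ and $t=\tfrac5{12(\delta+2)}$, which is precisely \cref{pentburst}.

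Alternatively — and this is essentially the route carried out by the SageMath computation in the appendix — one can bypass the symmetry analysis: pair both $\pentmor$ and the right-hand side of \cref{pentburst} against each of the sixteen basis diagrams using the bilinear form $b$ from the proof of \cref{nugget}. Each such pairing is a closed trivalent diagram which reduces to a scalar by repeatedly applying \cref{magic} to remove crossings, \cref{sqburst} to remove $4$-cycles, \cref{triangle} to remove $3$-cycles, and \cref{chess} together with \cref{teardrop} to remove loops, bigons and teardrops; one checks along the way that each closed diagram that arises has a face of size at most $4$, so that the reduction terminates. Since the Gram matrix of $b$ has determinant a nonzero multiple of $\delta^{16}(\delta-3)^8(4\delta^2-30\delta+45)$ under our hypotheses on $\delta$, agreement of the two vectors of sixteen pairings forces equality in $(\Tcat_\delta/\cI)(\go^{\otimes 2},\go^{\otimes 3})$, proving \cref{pentburst}. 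I expect the main obstacle to be organizational rather than conceptual: controlling the volume of these diagrammatic reductions and correctly tracking the signs introduced by the skew-symmetric vertex.
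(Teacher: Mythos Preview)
Your overall plan---exploit the rotational/dihedral symmetry of $\pentmor$, then contract to land in the five-dimensional space where \cref{sqburst} applies---is exactly the paper's approach. But the symmetry step is more delicate than your sketch suggests, and the phrase ``working out the orbits'' hides the actual content of the argument.

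The issue is that the sixteen basis diagrams \cref{particles} are \emph{not} permuted by $\Rot$. The last basis element (the tree with a crossing at the bottom) is not sent to another basis element under rotation; the paper computes explicitly, using \cref{magic} and \cref{turvy}, that
\[
\Rot\left(\,\begin{tikzpicture}[anchorbase]
	\draw (-0.2,0) -- (0,0.25) -- (0.2,0);
	\draw (0,0.25) -- (0,0.4);
	\draw  (-0.2,0) -- (-0.3,0.4);
	\draw (0.2,0) -- (0.3,0.4);
	\draw (0.2,0) -- (-0.3,-0.2);
	\draw  (-0.2,0) -- (0.3,-0.2);
\end{tikzpicture}\,\right)
=\begin{tikzpicture}[anchorbase]
	\draw (-0.2,0) -- (0,0.25) -- (0.2,0);
	\draw (0,0.25) -- (0,0.4);
	\draw  (-0.2,0) -- (-0.3,0.4);
	\draw (0.2,0) -- (0.3,0.4);
	\draw (0.2,0) -- (-0.3,-0.2);
	\draw  (-0.2,0) -- (0.3,-0.2);
\end{tikzpicture}
+\begin{tikzpicture}[anchorbase]
	\draw (-0.3,0.3) -- (0,0) -- (0.3,0.3);
	\draw (0,0.3) -- (-0.15,0.15);
	\draw (0,0) -- (0,-0.15) -- (-0.15,-0.3);
	\draw (0,-0.15) -- (0.15,-0.3);
\end{tikzpicture}
-\begin{tikzpicture}[anchorbase]
	\draw (0.3,-0.3) -- (-0.3,0.3);
	\draw (0.3,0.3) -- (0.15,-0.15);
	\draw (0,0.3) -- (-0.15,0.15);
	\draw (0,0) -- (-0.3,-0.3);
\end{tikzpicture}\,.
\]
So rotation-symmetrizing the pentagon gives a \emph{four}-parameter expression (the paper's $b,c,d,e$), not two, and the five ``caterpillar'' trees do not all carry the same coefficient at this stage---three of them pick up $b+2e$ while the two crossed ones pick up $b-3e$. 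The fact that the crossed trees end up with coefficient zero (equivalently $b=3e$) is not a consequence of an orbit count; it only emerges after you apply $\Switch$, simplify the resulting diagrams back into the basis using \cref{magic}, \cref{sqburst} and \cref{turvy}, and compare coefficients. That $\Switch$ step is the paper's second main computation, and it is where $c=d$ and $b=3e$ are forced. Your two-parameter ansatz is the \emph{conclusion} of this linear algebra, not something you can read off from a dihedral action on the basis.

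Concretely, the paper proceeds: (i) rotation-symmetrize to four unknowns; (ii) contract two adjacent top legs with $\mergemor$, use \cref{triangle} and \cref{sqburst}, and compare in the basis \cref{bigfive} to get three equations; (iii) apply $\Switch$ to the four-parameter form, rewrite via \cref{magic}, \cref{sqburst}, \cref{turvy}, and compare again for the remaining equations; (iv) solve. Your contraction step (ii) is correct; what you are missing is that step (iii) is where the real reduction to two parameters happens, and it requires reducing the switched pentagon and several switched trees back into the basis. Your alternative Gram-matrix route is sound and genuinely different from what the paper writes out, but note that evaluating the sixteen pairings with $\pentmor$ produces closed diagrams with $5$-cycles, so you must first assume an unknown expansion of $\pentmor$ and use these pairings to set up a linear system---you cannot simply ``reduce'' both sides independently.
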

	
\begin{proof}
 By the assumption \cref{dim16} and \cref{nugget}, the elements \cref{particles} form a basis for $\Tcat_\delta/\cI(\go^{\otimes 2},\go^{\otimes 3})$. Then we can write the pentagon as a linear combination of these elements. Rotating this linear combination by $\Rot^n$, for $n=0,\ldots 4$, and then summing up over the rotations, we get a new linear combination in the form
\begin{equation} \label{pentbreak1}
		\begin{multlined}
			\pentmor =
			b
			\left(
			\begin{tikzpicture}[anchorbase]
				\draw (-0.2,0) -- (0,0.25) -- (0.2,0);
				\draw (0,0.25) -- (0,0.4);
				\draw (-0.2,-0.25) -- (-0.2,0) -- (-0.3,0.4);
				\draw (0.2,-0.25) -- (0.2,0) -- (0.3,0.4);
			\end{tikzpicture}
			+
			\begin{tikzpicture}[anchorbase]
				\draw (-0.3,0.3) -- (0,0) -- (0.3,0.3);
				\draw (0,0.3) -- (-0.15,0.15);
				\draw (0,0) -- (0,-0.15) -- (-0.15,-0.3);
				\draw (0,-0.15) -- (0.15,-0.3);
			\end{tikzpicture}
			+
			\begin{tikzpicture}[anchorbase]
				\draw (0.3,0.3) -- (0,0) -- (-0.3,0.3);
				\draw (0,0.3) -- (0.15,0.15);
				\draw (0,0) -- (0,-0.15) -- (0.15,-0.3);
				\draw (0,-0.15) -- (-0.15,-0.3);
			\end{tikzpicture}
			+
			\begin{tikzpicture}[anchorbase]
				\draw (-0.3,-0.3) -- (0.3,0.3);
				\draw (-0.3,0.3) -- (-0.15,-0.15);
				\draw (0,0.3) -- (0.15,0.15);
				\draw (0,0) -- (0.3,-0.3);
			\end{tikzpicture}
			+
			\begin{tikzpicture}[anchorbase]
				\draw (0.3,-0.3) -- (-0.3,0.3);
				\draw (0.3,0.3) -- (0.15,-0.15);
				\draw (0,0.3) -- (-0.15,0.15);
				\draw (0,0) -- (-0.3,-0.3);
			\end{tikzpicture}
			\right)
			+ c
			\left(
			\begin{tikzpicture}[centerzero]
				\draw (-0.15,-0.3) -- (-0.15,-0.23) arc(180:0:0.15) -- (0.15,-0.3);
				\draw (-0.3,0.3) -- (0,0.08) -- (0.3,0.3);
				\draw (0,0.3) -- (0,0.08);
			\end{tikzpicture}
			+
			\begin{tikzpicture}[centerzero]
				\draw (-0.2,-0.3) -- (-0.2,0.3);
				\draw (0,0.3) -- (0.15,0) -- (0.3,0.3);
				\draw (0.15,0) -- (0.15,-0.3);
			\end{tikzpicture}
			+
			\begin{tikzpicture}[centerzero]
				\draw (0.2,-0.3) -- (0.2,0.3);
				\draw (0,0.3) -- (-0.15,0) -- (-0.3,0.3);
				\draw (-0.15,0) -- (-0.15,-0.3);
			\end{tikzpicture}
			+
			\begin{tikzpicture}[centerzero]
				\draw (-0.3,0.3) -- (-0.3,0.23) arc(180:360:0.15) -- (0,0.3);
				\draw (0.3,0.3) -- (0.15,0) -- (-0.2,-0.3);
				\draw (0.2,-0.3) -- (0.15,0);
			\end{tikzpicture}
			+
			\begin{tikzpicture}[centerzero]
				\draw (0.3,0.3) -- (0.3,0.23) arc(360:180:0.15) -- (0,0.3);
				\draw (-0.3,0.3) -- (-0.15,0) -- (0.2,-0.3);
				\draw (-0.2,-0.3) -- (-0.15,0);
			\end{tikzpicture}
			\right)
			\\
			+ d
			\left(
			\begin{tikzpicture}[centerzero]
				\draw (0,0.3) -- (0,-0.15) -- (-0.15,-0.3);
				\draw (0,-0.15) -- (0.15,-0.3);
				\draw (-0.2,0.3) -- (-0.2,0.25) arc(180:360:0.2) -- (0.2,0.3);
			\end{tikzpicture}
			+
			\begin{tikzpicture}[centerzero]
				\draw (0,0.3) to[out=-45,in=70] (0.15,-0.3);
				\draw (-0.3,0.3) -- (0,0) -- (0.3,0.3);
				\draw (0,0) -- (-0.15,-0.3);
			\end{tikzpicture}
			+
			\begin{tikzpicture}[centerzero]
				\draw (0,0.3) to[out=225,in=110] (-0.15,-0.3);
				\draw (0.3,0.3) -- (0,0) -- (-0.3,0.3);
				\draw (0,0) -- (0.15,-0.3);
			\end{tikzpicture}
			+
			\begin{tikzpicture}[centerzero]
				\draw (-0.3,0.3) -- (-0.15,0.15) -- (0,0.3);
				\draw (-0.15,0.15) -- (0.15,-0.3);
				\draw (0.3,0.3) -- (-0.15,-0.3);
			\end{tikzpicture}
			+
			\begin{tikzpicture}[centerzero]
				\draw (0.3,0.3) -- (0.15,0.15) -- (0,0.3);
				\draw (0.15,0.15) -- (-0.15,-0.3);
				\draw (-0.3,0.3) -- (0.15,-0.3);
			\end{tikzpicture}
			\right) +e  \sum_{n=0}^{4}\Rot^{n}\left(\begin{tikzpicture}[anchorbase]
				\draw (-0.2,0) -- (0,0.25) -- (0.2,0);
				\draw (0,0.25) -- (0,0.4);
				\draw  (-0.2,0) -- (-0.3,0.4);
				\draw (0.2,0) -- (0.3,0.4);
				\draw (0.2,0) -- (-0.3,-0.2);
				\draw  (-0.2,0) -- (0.3,-0.2);
			\end{tikzpicture}\; \right).
		\end{multlined}
\end{equation}
We have
\begin{equation}
	\begin{multlined} 
\Rot\left( \begin{tikzpicture}[anchorbase]
	\draw (-0.2,0) -- (0,0.25) -- (0.2,0);
	\draw (0,0.25) -- (0,0.4);
	\draw  (-0.2,0) -- (-0.3,0.4);
	\draw (0.2,0) -- (0.3,0.4);
	\draw (0.2,0) -- (-0.3,-0.2);
	\draw  (-0.2,0) -- (0.3,-0.2);
\end{tikzpicture}\;\right)\, =\,	\begin{tikzpicture}[anchorbase]
		\draw (-0.2,0.4) -- (-0.2,0.2);
		\draw (-0.2,0.2) -- (0.2,-0.2);
		\draw (-0.2,0.2) -- (-0.4,0) -- (-0.2,-0.2);
		\draw (-0.2,-0.2) -- (0.2,0.2);
		\draw (-0.2,-0.2) -- (-0.2,-0.4);
		\draw (-0.4,0) -- (-0.4,0.4);
	\end{tikzpicture} \, \overset{\cref{venom}}{=} \, \begin{tikzpicture}[anchorbase]
	\draw (-0.2,0.4) -- (-0.2,0.2);
	\draw[out=-90,in=90]  (-0.2,0.2) to (-0.4,-0.2);
	\draw[out=-90,in=90]  (-0.4,-0.2) to (0,-0.4);
	\draw (-0.2,0.2) -- (-0.4,0) -- (-0.2,-0.2);
	\draw(-0.2,-0.2) -- (0.2,0.2);
	\draw (-0.2,-0.2) -- (-0.2,-0.4);
	\draw (-0.4,0) -- (-0.4,0.4);
\end{tikzpicture}\; \overset{\mathclap{\cref{turvy}}}{\underset{\mathclap{\cref{magic}}}{=}}\; \; \begin{tikzpicture}[anchorbase]
	\draw (-0.2,0) -- (0,0.25) -- (0.2,0);
	\draw (0,0.25) -- (0,0.4);
	\draw  (-0.2,0) -- (-0.3,0.4);
	\draw (0.2,0) -- (0.3,0.4);
	\draw (0.2,0) -- (-0.3,-0.2);
	\draw  (-0.2,0) -- (0.3,-0.2);
\end{tikzpicture}\, -\;  \begin{tikzpicture}[anchorbase]
\draw (-0.2,0.3) -- (-0.2,0.2);
\draw (-0.2,0.3) -- (-0.1,0.4);
\draw (-0.2,0.3) -- (-0.3,0.4);
\draw (-0.2,0.2) -- (0,0) -- (0.2,0.2) -- (0.2,0.4);
\draw (-0.2,0.2) -- (-0.4,0) -- (0,-0.4);
\draw (0,0) -- (-0.4,-0.4);
\end{tikzpicture}
\; \overset{\mathclap{\cref{turvy}}}{\underset{\mathclap{\cref{magic}}}{=}}\;\; \begin{tikzpicture}[anchorbase]
	\draw (-0.2,0) -- (0,0.25) -- (0.2,0);
	\draw (0,0.25) -- (0,0.4);
	\draw  (-0.2,0) -- (-0.3,0.4);
	\draw (0.2,0) -- (0.3,0.4);
	\draw (0.2,0) -- (-0.3,-0.2);
	\draw  (-0.2,0) -- (0.3,-0.2);
\end{tikzpicture}\; +\; 	\begin{tikzpicture}[anchorbase]
\draw (-0.3,0.3) -- (0,0) -- (0.3,0.3);
\draw (0,0.3) -- (-0.15,0.15);
\draw (0,0) -- (0,-0.15) -- (-0.15,-0.3);
\draw (0,-0.15) -- (0.15,-0.3);
\end{tikzpicture}\; - \; 	\begin{tikzpicture}[anchorbase]
\draw (0.3,-0.3) -- (-0.3,0.3);
\draw (0.3,0.3) -- (0.15,-0.15);
\draw (0,0.3) -- (-0.15,0.15);
\draw (0,0) -- (-0.3,-0.3);
\end{tikzpicture}.
	\end{multlined}
\end{equation}
Then
\begin{align*}
	 \sum_{n=0}^{4}\Rot^n \left( \begin{tikzpicture}[anchorbase]
	\draw (-0.2,0) -- (0,0.25) -- (0.2,0);
	\draw (0,0.25) -- (0,0.4);
	\draw  (-0.2,0) -- (-0.3,0.4);
	\draw (0.2,0) -- (0.3,0.4);
	\draw (0.2,0) -- (-0.3,-0.2);
	\draw  (-0.2,0) -- (0.3,-0.2);
\end{tikzpicture}\;\right)& = 5 \;  \begin{tikzpicture}[anchorbase]
\draw (-0.2,0) -- (0,0.25) -- (0.2,0);
\draw (0,0.25) -- (0,0.4);
\draw  (-0.2,0) -- (-0.3,0.4);
\draw (0.2,0) -- (0.3,0.4);
\draw (0.2,0) -- (-0.3,-0.2);
\draw  (-0.2,0) -- (0.3,-0.2);
\end{tikzpicture} \; +\sum_{n=0}^{3}(4-n)\Rot^n\left(	\begin{tikzpicture}[anchorbase]
\draw (-0.3,0.3) -- (0,0) -- (0.3,0.3);
\draw (0,0.3) -- (-0.15,0.15);
\draw (0,0) -- (0,-0.15) -- (-0.15,-0.3);
\draw (0,-0.15) -- (0.15,-0.3);
\end{tikzpicture}\; - \; 	\begin{tikzpicture}[anchorbase]
\draw (0.3,-0.3) -- (-0.3,0.3);
\draw (0.3,0.3) -- (0.15,-0.15);
\draw (0,0.3) -- (-0.15,0.15);
\draw (0,0) -- (-0.3,-0.3);
\end{tikzpicture}\right) \\
& =  5 \;  \begin{tikzpicture}[anchorbase]
	\draw (-0.2,0) -- (0,0.25) -- (0.2,0);
	\draw (0,0.25) -- (0,0.4);
	\draw  (-0.2,0) -- (-0.3,0.4);
	\draw (0.2,0) -- (0.3,0.4);
	\draw (0.2,0) -- (-0.3,-0.2);
	\draw  (-0.2,0) -- (0.3,-0.2);
\end{tikzpicture} +2\; \begin{tikzpicture}[anchorbase]
\draw (-0.2,0) -- (0,0.25) -- (0.2,0);
\draw (0,0.25) -- (0,0.4);
\draw (-0.2,-0.25) -- (-0.2,0) -- (-0.3,0.4);
\draw (0.2,-0.25) -- (0.2,0) -- (0.3,0.4);
\end{tikzpicture}
+2\;
\begin{tikzpicture}[anchorbase]
\draw (-0.3,0.3) -- (0,0) -- (0.3,0.3);
\draw (0,0.3) -- (-0.15,0.15);
\draw (0,0) -- (0,-0.15) -- (-0.15,-0.3);
\draw (0,-0.15) -- (0.15,-0.3);
\end{tikzpicture}
+2\;
\begin{tikzpicture}[anchorbase]
\draw (0.3,0.3) -- (0,0) -- (-0.3,0.3);
\draw (0,0.3) -- (0.15,0.15);
\draw (0,0) -- (0,-0.15) -- (0.15,-0.3);
\draw (0,-0.15) -- (-0.15,-0.3);
\end{tikzpicture}
-3\;
\begin{tikzpicture}[anchorbase]
\draw (-0.3,-0.3) -- (0.3,0.3);
\draw (-0.3,0.3) -- (-0.15,-0.15);
\draw (0,0.3) -- (0.15,0.15);
\draw (0,0) -- (0.3,-0.3);
\end{tikzpicture}
-3\;
\begin{tikzpicture}[anchorbase]
\draw (0.3,-0.3) -- (-0.3,0.3);
\draw (0.3,0.3) -- (0.15,-0.15);
\draw (0,0.3) -- (-0.15,0.15);
\draw (0,0) -- (-0.3,-0.3);
\end{tikzpicture}.
\end{align*}
Thus
\begin{equation}\label{ghoul}
	\begin{multlined}
		\pentmor =
		(b+2e)
		\left(
		\begin{tikzpicture}[anchorbase]
			\draw (-0.2,0) -- (0,0.25) -- (0.2,0);
			\draw (0,0.25) -- (0,0.4);
			\draw (-0.2,-0.25) -- (-0.2,0) -- (-0.3,0.4);
			\draw (0.2,-0.25) -- (0.2,0) -- (0.3,0.4);
		\end{tikzpicture}
		+
		\begin{tikzpicture}[anchorbase]
			\draw (-0.3,0.3) -- (0,0) -- (0.3,0.3);
			\draw (0,0.3) -- (-0.15,0.15);
			\draw (0,0) -- (0,-0.15) -- (-0.15,-0.3);
			\draw (0,-0.15) -- (0.15,-0.3);
		\end{tikzpicture}
		+
		\begin{tikzpicture}[anchorbase]
			\draw (0.3,0.3) -- (0,0) -- (-0.3,0.3);
			\draw (0,0.3) -- (0.15,0.15);
			\draw (0,0) -- (0,-0.15) -- (0.15,-0.3);
			\draw (0,-0.15) -- (-0.15,-0.3);
		\end{tikzpicture}	\right)
	+(b-3e)	\left(\begin{tikzpicture}[anchorbase]
			\draw (-0.3,-0.3) -- (0.3,0.3);
			\draw (-0.3,0.3) -- (-0.15,-0.15);
			\draw (0,0.3) -- (0.15,0.15);
			\draw (0,0) -- (0.3,-0.3);
		\end{tikzpicture}
		+
		\begin{tikzpicture}[anchorbase]
			\draw (0.3,-0.3) -- (-0.3,0.3);
			\draw (0.3,0.3) -- (0.15,-0.15);
			\draw (0,0.3) -- (-0.15,0.15);
			\draw (0,0) -- (-0.3,-0.3);
		\end{tikzpicture}\right)
		+ c
		\left(
		\begin{tikzpicture}[centerzero]
			\draw (-0.15,-0.3) -- (-0.15,-0.23) arc(180:0:0.15) -- (0.15,-0.3);
			\draw (-0.3,0.3) -- (0,0.08) -- (0.3,0.3);
			\draw (0,0.3) -- (0,0.08);
		\end{tikzpicture}
		+
		\begin{tikzpicture}[centerzero]
			\draw (-0.2,-0.3) -- (-0.2,0.3);
			\draw (0,0.3) -- (0.15,0) -- (0.3,0.3);
			\draw (0.15,0) -- (0.15,-0.3);
		\end{tikzpicture}
		+
		\begin{tikzpicture}[centerzero]
			\draw (0.2,-0.3) -- (0.2,0.3);
			\draw (0,0.3) -- (-0.15,0) -- (-0.3,0.3);
			\draw (-0.15,0) -- (-0.15,-0.3);
		\end{tikzpicture}
		+
		\begin{tikzpicture}[centerzero]
			\draw (-0.3,0.3) -- (-0.3,0.23) arc(180:360:0.15) -- (0,0.3);
			\draw (0.3,0.3) -- (0.15,0) -- (-0.2,-0.3);
			\draw (0.2,-0.3) -- (0.15,0);
		\end{tikzpicture}
		+
		\begin{tikzpicture}[centerzero]
			\draw (0.3,0.3) -- (0.3,0.23) arc(360:180:0.15) -- (0,0.3);
			\draw (-0.3,0.3) -- (-0.15,0) -- (0.2,-0.3);
			\draw (-0.2,-0.3) -- (-0.15,0);
		\end{tikzpicture}
		\right)
		\\
		+ d
		\left(
		\begin{tikzpicture}[centerzero]
			\draw (0,0.3) -- (0,-0.15) -- (-0.15,-0.3);
			\draw (0,-0.15) -- (0.15,-0.3);
			\draw (-0.2,0.3) -- (-0.2,0.25) arc(180:360:0.2) -- (0.2,0.3);
		\end{tikzpicture}
		+
		\begin{tikzpicture}[centerzero]
			\draw (0,0.3) to[out=-45,in=70] (0.15,-0.3);
			\draw (-0.3,0.3) -- (0,0) -- (0.3,0.3);
			\draw (0,0) -- (-0.15,-0.3);
		\end{tikzpicture}
		+
		\begin{tikzpicture}[centerzero]
			\draw (0,0.3) to[out=225,in=110] (-0.15,-0.3);
			\draw (0.3,0.3) -- (0,0) -- (-0.3,0.3);
			\draw (0,0) -- (0.15,-0.3);
		\end{tikzpicture}
		+
		\begin{tikzpicture}[centerzero]
			\draw (-0.3,0.3) -- (-0.15,0.15) -- (0,0.3);
			\draw (-0.15,0.15) -- (0.15,-0.3);
			\draw (0.3,0.3) -- (-0.15,-0.3);
		\end{tikzpicture}
		+
		\begin{tikzpicture}[centerzero]
			\draw (0.3,0.3) -- (0.15,0.15) -- (0,0.3);
			\draw (0.15,0.15) -- (-0.15,-0.3);
			\draw (-0.3,0.3) -- (0.15,-0.3);
		\end{tikzpicture}
		\right) + 5e  \;\begin{tikzpicture}[anchorbase]
			\draw (-0.2,0) -- (0,0.25) -- (0.2,0);
			\draw (0,0.25) -- (0,0.4);
			\draw  (-0.2,0) -- (-0.3,0.4);
			\draw (0.2,0) -- (0.3,0.4);
			\draw (0.2,0) -- (-0.3,-0.2);
			\draw  (-0.2,0) -- (0.3,-0.2);
		\end{tikzpicture}.
	\end{multlined}
\end{equation}
 Recall, from \cref{triangle}, that $\displaystyle \trimor = \frac{1}{2} \mergemor$.  Composing with $\mergemor$ on the rightmost two strings at the top of \cref{ghoul} and using \cref{turvy,magic} gives
\begin{multline*}
		\frac{1}{2}  \sqmor
		=
		 \left(\frac{b+2e}{2}+b-3e\right) \Hmor + \left(3\frac{b+2e}{2}\right)\Imor   + (b-3e)\sqmor  
		+ c
		\left( \, \jail + \hourglass\,   + \Hmor + \Imor
		\right) \\
		+ d \left(- \Hmor - \Imor+  \crossmor \right)+\frac{5e}{2}\left(\Hmor-\Imor\right).
\end{multline*}
	Hence 
\[
	\left(\frac{1}{2} - b+3e\right) \sqmor
	= \left( \frac{3}{2}b + c - d +\frac{e}{2}\right)
	\left(\, \Hmor + \Imor\, \right)
	+
	c  
	\left(\, \jail + \hourglass \, \right)
	+  d
	\, \crossmor\ .
\]
	Comparing to \cref{sqburst} and using the fact that the morphisms in \cref{bigfive}  form a basis of $\displaystyle(\Fcat_\delta/\cI)(\go^{\otimes 2}, \go^{\otimes 2})$ gives
\begin{equation}\label{head}
		\begin{aligned} 
			&	\frac{1}{6} \left(\frac{1}{2}  - b+3e\right)
		 = \frac{3}{2}b + c - d+\frac{e}{2}, \\
	&	\frac{5}{6(\delta+2)}\left(\frac{1}{2}  - b+3e\right)
	  =  c, \\
	&	\frac{5}{6(\delta+2)}\left(\frac{1}{2}  - b+3e\right)	 = d.
	\end{aligned}
\end{equation}
Applying the operator $\Switch$ to \cref{ghoul}, and using \cref{chess}, gives 
\begin{equation}\label{boogeyman}
	\begin{multlined}
		\begin{tikzpicture}[anchorbase]
			\draw (-0.25,0.2) -- (0,0.25) -- (0.25,0.2);
			\draw (0,0.25) -- (0,0.4);
			\draw  (-0.2,0) -- (-0.3,0.4);
			\draw (0.2,0) -- (0.3,0.4);
			\draw (0.2,0) -- (-0.3,-0.2);
			\draw  (-0.2,0) -- (0.3,-0.2);
			\draw  (-0.2,0) -- (0.2,0);
		\end{tikzpicture} =
		(b+2e)
		\left(\;\begin{tikzpicture}[anchorbase]
			\draw (-0.2,0) -- (0,0.25) -- (0.2,0);
			\draw (0,0.25) -- (0,0.4);
			\draw  (-0.2,0) -- (-0.3,0.4);
			\draw (0.2,0) -- (0.3,0.4);
			\draw (0.2,0) -- (-0.3,-0.2);
			\draw  (-0.2,0) -- (0.3,-0.2);
		\end{tikzpicture}
	-
		\begin{tikzpicture}[anchorbase]
			\draw (-0.3,0.3) -- (0,0) -- (0.3,0.3);
			\draw (0,0.3) -- (-0.15,0.15);
			\draw (0,0) -- (0,-0.15) -- (-0.15,-0.3);
			\draw (0,-0.15) -- (0.15,-0.3);
		\end{tikzpicture}
		-
		\begin{tikzpicture}[anchorbase]
			\draw (0.3,0.3) -- (0,0) -- (-0.3,0.3);
			\draw (0,0.3) -- (0.15,0.15);
			\draw (0,0) -- (0,-0.15) -- (0.15,-0.3);
			\draw (0,-0.15) -- (-0.15,-0.3);
		\end{tikzpicture}	\right)
		+(b-3e)	\left(\begin{tikzpicture}[anchorbase]
			\draw (0.3,0) -- (0.3,0.3);
			\draw (-0.3,0) -- (-0.3,0.15);
			\draw (0.3,0) -- (-0.3,-0.3);
			\draw (-0.3,0) -- (0.3,-0.3);
			\draw (-0.3,0) -- (0.3,0);
			\draw (-0.3,0.15) -- (-0.15,0.3);
			\draw (-0.3,0.15) -- (-0.45,0.3);
		\end{tikzpicture}
		+
		\begin{tikzpicture}[anchorbase]
		\draw (0.3,0) -- (0.3,0.15);
		\draw (-0.3,0) -- (-0.3,0.3);
		\draw (0.3,0) -- (-0.3,-0.3);
		\draw (-0.3,0) -- (0.3,-0.3);
		\draw (-0.3,0) -- (0.3,0);
		\draw (0.45,0.3) -- (0.3,0.15);
		\draw (0.15,0.3) -- (0.3,0.15);
		\end{tikzpicture}\right)
		+ c
		\left(
		\begin{tikzpicture}[centerzero]
			\draw (-0.15,-0.3) -- (-0.15,-0.23) arc(180:0:0.15) -- (0.15,-0.3);
			\draw (-0.3,0.3) -- (0,0.08) -- (0.3,0.3);
			\draw (0,0.3) -- (0,0.08);
		\end{tikzpicture}
		+
	\begin{tikzpicture}[centerzero]
		\draw (-0.3,0.3) -- (-0.15,0.15) -- (0,0.3);
		\draw (-0.15,0.15) -- (0.15,-0.3);
		\draw (0.3,0.3) -- (-0.15,-0.3);
	\end{tikzpicture}
	+
	\begin{tikzpicture}[centerzero]
		\draw (0.3,0.3) -- (0.15,0.15) -- (0,0.3);
		\draw (0.15,0.15) -- (-0.15,-0.3);
		\draw (-0.3,0.3) -- (0.15,-0.3);
	\end{tikzpicture}	-
		\begin{tikzpicture}[centerzero]
			\draw (-0.3,0.3) -- (-0.3,0.23) arc(180:360:0.15) -- (0,0.3);
			\draw (0.3,0.3) -- (0.15,0) -- (-0.2,-0.3);
			\draw (0.2,-0.3) -- (0.15,0);
		\end{tikzpicture}
	-
		\begin{tikzpicture}[centerzero]
			\draw (0.3,0.3) -- (0.3,0.23) arc(360:180:0.15) -- (0,0.3);
			\draw (-0.3,0.3) -- (-0.15,0) -- (0.2,-0.3);
			\draw (-0.2,-0.3) -- (-0.15,0);
		\end{tikzpicture}
		\right)
		\\
		+ d
		\left(
		-\,\begin{tikzpicture}[centerzero]
			\draw (0,0.3) -- (0,-0.15) -- (-0.15,-0.3);
			\draw (0,-0.15) -- (0.15,-0.3);
			\draw (-0.2,0.3) -- (-0.2,0.25) arc(180:360:0.2) -- (0.2,0.3);
		\end{tikzpicture}
		+
		\begin{tikzpicture}[centerzero]
			\draw (0,0.3) to[out=-45,in=70] (0.15,-0.3);
			\draw (-0.3,0.3) -- (0,0) -- (0.3,0.3);
			\draw (0,0) -- (-0.15,-0.3);
		\end{tikzpicture}
		+
		\begin{tikzpicture}[centerzero]
			\draw (0,0.3) to[out=225,in=110] (-0.15,-0.3);
			\draw (0.3,0.3) -- (0,0) -- (-0.3,0.3);
			\draw (0,0) -- (0.15,-0.3);
		\end{tikzpicture}
		+
	\begin{tikzpicture}[centerzero]
		\draw (-0.2,-0.3) -- (-0.2,0.3);
		\draw (0,0.3) -- (0.15,0) -- (0.3,0.3);
		\draw (0.15,0) -- (0.15,-0.3);
	\end{tikzpicture}
	+
	\begin{tikzpicture}[centerzero]
		\draw (0.2,-0.3) -- (0.2,0.3);
		\draw (0,0.3) -- (-0.15,0) -- (-0.3,0.3);
		\draw (-0.15,0) -- (-0.15,-0.3);
	\end{tikzpicture}\,
		\right) + 5e  	\begin{tikzpicture}[anchorbase]
			\draw (-0.2,0) -- (0,0.25) -- (0.2,0);
			\draw (0,0.25) -- (0,0.4);
			\draw (-0.2,-0.25) -- (-0.2,0) -- (-0.3,0.4);
			\draw (0.2,-0.25) -- (0.2,0) -- (0.3,0.4);
		\end{tikzpicture}.
	\end{multlined}
\end{equation}
Using relations \cref{magic,sqburst} and the last relation in \cref{turvy}, together with the fact, from \cref{head}, that $c=d$, leads to the equation 
\begin{multline}\label{jin}
		\pentmor =
		\left(\frac{1}{6}-2b+e\right)
		\left(
		\begin{tikzpicture}[anchorbase]
			\draw (-0.3,0.3) -- (0,0) -- (0.3,0.3);
			\draw (0,0.3) -- (-0.15,0.15);
			\draw (0,0) -- (0,-0.15) -- (-0.15,-0.3);
			\draw (0,-0.15) -- (0.15,-0.3);
		\end{tikzpicture}
		+
		\begin{tikzpicture}[anchorbase]
			\draw (0.3,0.3) -- (0,0) -- (-0.3,0.3);
			\draw (0,0.3) -- (0.15,0.15);
			\draw (0,0) -- (0,-0.15) -- (0.15,-0.3);
			\draw (0,-0.15) -- (-0.15,-0.3);
		\end{tikzpicture}	\right)+ 5e \, 
	\begin{tikzpicture}[anchorbase]
	\draw (-0.2,0) -- (0,0.25) -- (0.2,0);
	\draw (0,0.25) -- (0,0.4);
	\draw (-0.2,-0.25) -- (-0.2,0) -- (-0.3,0.4);
	\draw (0.2,-0.25) -- (0.2,0) -- (0.3,0.4);
\end{tikzpicture}
		+(b-3e)	\left(\begin{tikzpicture}[anchorbase]
			\draw (-0.3,-0.3) -- (0.3,0.3);
			\draw (-0.3,0.3) -- (-0.15,-0.15);
			\draw (0,0.3) -- (0.15,0.15);
			\draw (0,0) -- (0.3,-0.3);
		\end{tikzpicture}
		+
		\begin{tikzpicture}[anchorbase]
			\draw (0.3,-0.3) -- (-0.3,0.3);
			\draw (0.3,0.3) -- (0.15,-0.15);
			\draw (0,0.3) -- (-0.15,0.15);
			\draw (0,0) -- (-0.3,-0.3);
		\end{tikzpicture}\right)
	\\
	+ 
	\left(\frac{5}{6(\delta+2)}-c\right)
		\left(
		\begin{tikzpicture}[centerzero]
			\draw (0,0.3) -- (0,-0.15) -- (-0.15,-0.3);
			\draw (0,-0.15) -- (0.15,-0.3);
			\draw (-0.2,0.3) -- (-0.2,0.25) arc(180:360:0.2) -- (0.2,0.3);
		\end{tikzpicture}
		+
		\begin{tikzpicture}[centerzero]
			\draw (-0.3,0.3) -- (-0.3,0.23) arc(180:360:0.15) -- (0,0.3);
			\draw (0.3,0.3) -- (0.15,0) -- (-0.2,-0.3);
			\draw (0.2,-0.3) -- (0.15,0);
		\end{tikzpicture}
		+
		\begin{tikzpicture}[centerzero]
			\draw (0.3,0.3) -- (0.3,0.23) arc(360:180:0.15) -- (0,0.3);
			\draw (-0.3,0.3) -- (-0.15,0) -- (0.2,-0.3);
			\draw (-0.2,-0.3) -- (-0.15,0);
		\end{tikzpicture}
		\right)
		+ c
		\left(\begin{tikzpicture}[centerzero]
			\draw (-0.15,-0.3) -- (-0.15,-0.23) arc(180:0:0.15) -- (0.15,-0.3);
			\draw (-0.3,0.3) -- (0,0.08) -- (0.3,0.3);
			\draw (0,0.3) -- (0,0.08);
		\end{tikzpicture}
		+
		\begin{tikzpicture}[centerzero]
			\draw (-0.2,-0.3) -- (-0.2,0.3);
			\draw (0,0.3) -- (0.15,0) -- (0.3,0.3);
			\draw (0.15,0) -- (0.15,-0.3);
		\end{tikzpicture}
		+
		\begin{tikzpicture}[centerzero]
			\draw (0.2,-0.3) -- (0.2,0.3);
			\draw (0,0.3) -- (-0.15,0) -- (-0.3,0.3);
			\draw (-0.15,0) -- (-0.15,-0.3);
		\end{tikzpicture}
		+
		\begin{tikzpicture}[centerzero]
			\draw (0,0.3) to[out=-45,in=70] (0.15,-0.3);
			\draw (-0.3,0.3) -- (0,0) -- (0.3,0.3);
			\draw (0,0) -- (-0.15,-0.3);
		\end{tikzpicture}
		+
		\begin{tikzpicture}[centerzero]
			\draw (0,0.3) to[out=225,in=110] (-0.15,-0.3);
			\draw (0.3,0.3) -- (0,0) -- (-0.3,0.3);
			\draw (0,0) -- (0.15,-0.3);
		\end{tikzpicture}
		+
		\begin{tikzpicture}[centerzero]
			\draw (-0.3,0.3) -- (-0.15,0.15) -- (0,0.3);
			\draw (-0.15,0.15) -- (0.15,-0.3);
			\draw (0.3,0.3) -- (-0.15,-0.3);
		\end{tikzpicture}
		+
		\begin{tikzpicture}[centerzero]
			\draw (0.3,0.3) -- (0.15,0.15) -- (0,0.3);
			\draw (0.15,0.15) -- (-0.15,-0.3);
			\draw (-0.3,0.3) -- (0.15,-0.3);
		\end{tikzpicture}
		\right) + (b+2e)  \;\begin{tikzpicture}[anchorbase]
			\draw (-0.2,0) -- (0,0.25) -- (0.2,0);
			\draw (0,0.25) -- (0,0.4);
			\draw  (-0.2,0) -- (-0.3,0.4);
			\draw (0.2,0) -- (0.3,0.4);
			\draw (0.2,0) -- (-0.3,-0.2);
			\draw  (-0.2,0) -- (0.3,-0.2);
		\end{tikzpicture}.
\end{multline}

Comparing \cref{jin} to \cref{ghoul}, and using the fact morphisms in \cref{particles} are linearly independent, we obtain the following equations
\begin{equation}\label{tail}
	 \frac{1}{6}-2b+e=b+2e, \; 5e=b+2e, \; b-3e =b-3e, \; b+2e =5e, \; \text{and}\, \frac{5}{6(\delta+2)}-c=c.
\end{equation}
Solving the system of equations in \cref{head,tail}, we get \[1=60e,\quad b=3e,\quad  c=d= \frac{25e}{\delta+2}.\]
As the dependence relation \cref{pentbreak1} was assumed to be nontrivial, then $e\neq 0$ and we get \cref{pentburst}.
\end{proof}
 
\begin{rem}
Note that the first six terms of relation \cref{pentburst} as well as the two  coefficients in the relation  appear  in~\cite[p.~136]{AR99}, although the relation is not complete therein.  
\end{rem}

\section{A functor into representations of exceptional Lie algebras\label{sec:functor}}

For the remainder of this paper, we assume that $\fg$ is of type either $G_2,F_4, E_6,E_7$ or $E_8$. In this case we have the following dimensions computed using Sagemath (see \hyperref[appendix]{appendix}): 
 \begin{equation} \label{demon}
	\dim  \Hom_\fg\left(\fg^{\otimes 2}, \fg^{\otimes 2}\right) = 5
	\quad \text{and} \quad
	\dim\Hom_\fg \left( \fg^{\otimes 2}, \fg^{\otimes 3} \right) =16.
\end{equation}

Since the category $\fg$-mod is idempotent complete, the functor from \cref{magneto} induces a functor
\begin{equation} \label{lake}
	\Kar(\Phi) \colon \Kar \left( \Tcat_{\delta} \right) \to \fg\md.
\end{equation}
Let $G$ be the compact, simply-connected Lie group whose
complexified Lie algebra is $\fg$.
\begin{prop} \label{splat} For $\fg$ of type $G_2,F_4$ or $E_8$, the functor $\Kar(\Phi)$ is essentially surjective.
\end{prop}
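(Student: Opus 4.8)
The plan is as follows. Since $\fg\md$ is semisimple, the induced functor $\Kar(\Phi)$ (see \cref{lake}) is essentially surjective iff every simple $\fg$-module $V_\lambda$ lies in its essential image $\mathcal E$; and since $\mathcal E$ is closed under $\oplus$ and $\otimes$ and contains $\Phi(\one)=\kk$ and $\Phi(\go)=\fg$, this holds iff every $V_\lambda$ is isomorphic to the image of an idempotent of $\Add(\Tcat_\delta)$ mapping into some $\fg^{\otimes n}$. Now the weights of the adjoint representation are the roots of $\fg$ together with $0$; these generate the root lattice $Q$, and conversely every simple summand of a tensor power $\fg^{\otimes n}$ has highest weight in $Q$. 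Hence $V_\lambda$ occurs in some $\fg^{\otimes n}$ exactly when $\lambda\in Q$. (One quick way to see the nontrivial ``if'': $\fg$ is a faithful representation of the connected group $G$ with complexified Lie algebra $\fg$, which for types $G_2$, $F_4$, $E_8$ is its own adjoint group; by the Peter--Weyl theorem the matrix coefficients of $\fg$ and of $\fg^{*}\cong\fg$ generate $\mathcal O(G)$, so every irreducible is a constituent of some $\fg^{\otimes n}\otimes(\fg^{*})^{\otimes m}\cong\fg^{\otimes(n+m)}$.) For $\fg$ of type $G_2$, $F_4$ or $E_8$ --- and, among simple Lie algebras, precisely these --- one has $Q=P$, so \emph{every} dominant $\lambda$ lies in $Q$ and every $V_\lambda$ occurs in some $\fg^{\otimes n}$. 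This is the only point at which the restriction on the type is used (for $E_6$, $E_7$ the adjoint representation only tensor-generates the representations of the adjoint group, which is why those types are excluded here, as in \cref{full}).

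It remains to show these summands are actually hit in $\Kar(\Tcat_\delta)$. I would first treat $n=2$. By \cref{demon} the algebra $\End_\fg(\fg^{\otimes 2})$ is $5$-dimensional, and by \cref{SUP} (whose hypotheses hold in $\fg\md$, the Jacobi identity being relation \cref{magic}) the images of $\jail,\hourglass,\crossmor,\Hmor,\Imor$ are linearly independent in it; hence they form a basis and $\Phi$ is \emph{surjective} onto $\End_\fg(\fg^{\otimes 2})$. Moreover $\fg^{\otimes 2}$ is multiplicity-free, with five pairwise non-isomorphic simple summands, among them $\kk$, $\fg$ and the Cartan square $V_{2\theta}$, where $\theta$ is the highest root. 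Several of the corresponding projectors are images of manifest idempotents of $\Tcat_\delta$: $\tfrac1\delta\,\cupmor\circ\capmor$ (onto $\kk$); $\tfrac1\alpha\,\Imor=\tfrac1\alpha\,\splitmor\circ\mergemor$, which is idempotent because $\mergemor\circ\splitmor=\alpha\,\id_\go$ by \cref{chess}, with image the copy of $\fg$ inside $\wedge^2\fg$; and $\tfrac12(\jail+\crossmor)$, $\tfrac12(\jail-\crossmor)$ onto $\Sym^2\fg$ and $\wedge^2\fg$. Subtracting appropriately --- using that $\cupmor$, $\capmor$ are symmetric and that $\crossmor\circ\Imor=-\Imor$ --- splits off $\kk$ from $\Sym^2\fg$ and $\fg$ from $\wedge^2\fg$, leaving honest idempotents of $\Tcat_\delta$ with images $V_{2\theta}\oplus Y$ and the fifth summand. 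To separate $V_{2\theta}$ from $Y$ I would use a diagrammatic Casimir $C\in\End_{\Tcat_\delta}(\go^{\otimes 2})$ (built from $\Hmor$, i.e.\ an affine function of the diagonal Casimir on $\fg^{\otimes 2}$), which acts on these two summands by distinct scalars; a suitable quadratic polynomial in $C$, restricted to this block, yields the two remaining idempotents, provided one first checks that $C$ satisfies the corresponding quadratic identity already in $\Tcat_\delta$ --- which follows from reducing closed diagrams using \cref{chess}, \cref{venom} (equivalently, the relevant diagrams lie in a finite-dimensional subalgebra).

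With all five simple summands of $\fg^{\otimes 2}$ placed in $\mathcal E$, I would conclude by an induction on $n$: write $\fg^{\otimes n}=\fg^{\otimes(n-1)}\otimes\fg$, decompose $\fg^{\otimes(n-1)}$ by the inductive hypothesis into pieces lying in $\mathcal E$, and cut each simple summand $V_\lambda$ of $\fg^{\otimes n}$ out of $V_\mu\otimes\fg$, for an already-realized $V_\mu$, by composing the projector onto $V_\mu$ with the $n=2$-type projectors applied to the last two strands --- formally the mechanism that produces the Jones--Wenzl projectors in $\TL$, with the $5$-dimensional $\End_\fg(\fg^{\otimes 2})$ in the role of the endomorphism algebra there. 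Together with the two preceding paragraphs this gives the claim. The main obstacle is exactly this step of \emph{realizing} the abstractly-existing summands as images of idempotents visible in $\Kar(\Tcat_\delta)$, since $\Phi$ is not a priori full on the higher hom-spaces; the decisive input is that it \emph{is} full on $\End(\go^{\otimes 2})$ --- the content of the coincidence ``$5$'' in \cref{demon} --- and the delicate point is verifying that the auxiliary operators satisfy the quadratic (minimal-polynomial) identities needed to produce genuine idempotents rather than mere preimages of idempotents, where one uses the relations of \cref{Tdef} and, when the stronger dimension hypotheses hold, the relations of $\Fcat_\delta$ via \cref{sqexplode} and \cref{pentexplode}.
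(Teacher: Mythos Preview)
Your first paragraph is correct and is exactly the paper's argument: the paper reduces to showing that every irreducible $\fg$-module is a direct summand of some $\fg^{\otimes n}$, and proves this by noting that the adjoint representation of the compact group $G$ is faithful (equivalently, $P=Q$ for types $G_2,F_4,E_8$) and then citing \cite[Ch.~3, Th.~4.4]{BD85}, which is your Peter--Weyl argument. So on this point you and the paper agree.

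Where you diverge is in what comes next. You correctly observe that ``$V_\lambda$ is a summand of $\fg^{\otimes n}$'' does not by itself place $V_\lambda$ in the essential image of $\Kar(\Phi)$: one still needs an idempotent \emph{in $\Tcat_\delta$} whose image under $\Phi$ projects onto $V_\lambda$. The paper's proof of \cref{splat} does not address this point explicitly --- it simply asserts the reduction and stops. The resolution is the very next result, \cref{full}: once $\Phi$ is full, every idempotent in $\End_\fg(\fg^{\otimes n})$ lifts, and the reduction becomes valid. The proof of \cref{full} (via Doplicher--Roberts) does not rely on \cref{splat}, so there is no circularity; the two propositions should be read together.

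Your attempt to avoid fullness by an explicit inductive construction does not work, and you already sense this. The specific failure is the induction step: composing a projector onto $V_\mu\subset\fg^{\otimes(n-1)}$ with the five ``$n=2$'' projectors on the last two strands does not produce idempotents on $V_\mu\otimes\fg$, because these two families of operators act on overlapping strands (the $(n{-}1)$st) and do not commute; and even if one forced idempotents out of them, the last-two-strand projectors only see the decomposition of $\fg\otimes\fg$, not that of $V_\mu\otimes\fg$. The Jones--Wenzl analogy is misleading here: it works in $\TL$ because the fusion graph is a line (at most two summands in $V_\mu\otimes\go$), whereas $V_\mu\otimes\fg$ typically has many nonisomorphic summands that no combination of two-strand operators can separate. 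So drop the second half of your argument and invoke \cref{full} instead; alternatively, note that the Doplicher--Roberts argument in the proof of \cref{full} already yields essential surjectivity of $\Kar(\Phi)$ simultaneously with fullness.
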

\begin{proof}
We need to show that every finite-dimensional irreducible representation of $\fg$ is a direct summand of some power of $\fg$.	For $G$ of type $G_2,F_4$ or $E_8$, every finite-dimensional irreducible representation is faithful. 
 So, in particular the adjoint representation $\mathrm{Ad}$ is faithful. It follows, by \cite[Ch.~3, Th.~4.4]{BD85}, that every finite-dimensional irreducible representation of $G$ is a contained  in some tensor power of $\mathrm{Ad}$. Passing to the Lie algebra side, the proposition follows.
\end{proof}
\begin{rem}
	We have already constructed in \cref{G2} a functor from a certain quotient of $\Tcat_{7,-6}$ to the category of finite-dimensional representations of $G_2$. These two functors are different in that the one in \cref{G2} sends the generating object $\go$ to the minimal fundamental representation of $G_2$, which we identified with $\Im(\OO)$, while the one in \cref{splat} sends $\go$ to the adjoint representation.
\end{rem}
\begin{prop} \label{full} For $\fg$ of type $G_2,F_4,E_7$ or $E_8$, the functor $\Phi$  is full.
\end{prop}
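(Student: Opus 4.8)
The plan is to show first that $\Phi$ passes to the quotient $\Fcat_\delta$, and then to deduce fullness from a surjectivity statement about $\fg$-invariant tensors, proved by induction on the number of strands.

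\emph{Factoring through $\Fcat_\delta$.} The functor $\Phi$ of \cref{magneto} sends \cref{magic} to the Jacobi identity \cref{Jacob}, which holds in $\fg\md$. Put $\cI=\ker\Phi$. By \cref{demon} together with the linear independence of the images of \cref{bigfive} in $\End_\fg(\fg^{\otimes2})$ and of \cref{particles} in $\Hom_\fg(\fg^{\otimes2},\fg^{\otimes3})$, one gets $\dim(\Tcat_\delta/\cI)(\go^{\otimes2},\go^{\otimes2})=5$ and $\dim(\Tcat_\delta/\cI)(\go^{\otimes2},\go^{\otimes3})=16$, and $\Tcat_\delta/\cI\cong\im\Phi$ is nontrivial. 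Since $\dim_\kk\fg\in\{14,52,78,133,248\}$ avoids $\{-2,0,3,\tfrac{15\pm3\sqrt5}{4}\}$, \cref{sqexplode} and \cref{pentexplode} apply to $\cI$, so \cref{sqburst} and \cref{pentburst} hold modulo $\cI$; hence $\Phi$ factors through a monoidal functor $\Fcat_\delta\to\fg\md$, still denoted $\Phi$.

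\emph{Reduction and base cases.} Since $\Fcat_\delta$ and $\fg\md$ are pivotal with $\go$, resp.\ $\fg$, self-dual, and $\Phi$ respects this, the isomorphisms $\Hom(\go^{\otimes m},\go^{\otimes n})\cong\Hom(\one,\go^{\otimes(m+n)})$ are $\Phi$-compatible, so $\Phi$ is full iff $\Phi\colon\Fcat_\delta(\one,\go^{\otimes N})\to(\fg^{\otimes N})^{\fg}$ is onto for all $N$. Let $\mathcal S$ be the collection of images $\im\bigl(\Phi\colon\Fcat_\delta(\go^{\otimes m},\go^{\otimes n})\to\Hom_\fg(\fg^{\otimes m},\fg^{\otimes n})\bigr)$; this is a $\kk$-linear monoidal subcategory of $\fg\md$ containing cups, caps, crossings, $\Phi(\mergemor)$ and $\Phi(\splitmor)$. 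For $N\le3$ the claim is clear: $(\fg)^\fg=0$, $(\fg^{\otimes2})^\fg=\kk\,\Phi(\cupmor)$, and $(\fg^{\otimes3})^\fg$ is one-dimensional (the adjoint of an exceptional Lie algebra carries no invariant symmetric cubic form), spanned by the closed trivalent vertex. For $N=4,5$, \cref{SUP} and \cref{nugget} (used with $\cI=\ker\Phi$) say precisely that \cref{bigfive} and \cref{particles} map to bases of $\Hom_\fg(\fg^{\otimes2},\fg^{\otimes2})\cong(\fg^{\otimes4})^\fg$ and $\Hom_\fg(\fg^{\otimes2},\fg^{\otimes3})\cong(\fg^{\otimes5})^\fg$. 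Hence $\mathcal S$ contains $(\fg^{\otimes N})^\fg$ for $N\le5$.

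\emph{Induction and the main obstacle.} For $N\ge6$ I would prove $(\fg^{\otimes N})^\fg\subseteq\mathcal S$ by induction on $N$. Decompose $\fg\otimes\fg=\kk\oplus\fg\oplus V_1\oplus V_2\oplus V_3$ into its five irreducible summands (multiplicity free by \cref{demon}), applied to the last two tensor factors. The projections onto the $\kk$- and $\fg$-summands are $\Phi(\cupmor)\circ\Phi(\capmor)$ and a scalar times $\Phi(\splitmor)\circ\Phi(\mergemor)$, and the projections onto $V_1,V_2,V_3$ are linear combinations of the images of \cref{bigfive} (they lie in the commutative algebra $\End_\fg(\fg^{\otimes2})$, which those five morphisms span); all lie in $\mathcal S$. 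For $\phi\in(\fg^{\otimes N})^\fg$, its components on the $\kk$- and $\fg$-summands factor through $\fg^{\otimes(N-2)}$, resp.\ $\fg^{\otimes(N-1)}$, post-composed with a morphism of $\mathcal S$, hence lie in $\mathcal S$ by induction. The components on $V_1,V_2,V_3$ are the crux: there $\phi$ does not obviously factor through a smaller tensor power of $\fg$, and one must instead use \cref{magic}, \cref{sqburst}, \cref{pentburst} and derived identities such as \cref{triangle} to rewrite the corresponding diagrams until it does --- equivalently, to show that $\mathcal S$ is closed under direct summands inside $\fg\md$, i.e.\ that no irreducible summand of a tensor power of $\fg$, and no morphism between two such, is invisible to trivalent diagrams. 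I expect this to be the hard part, to be carried out uniformly for $G_2,F_4,E_7,E_8$ from the explicit decompositions of $\fg^{\otimes2}$ and $\fg^{\otimes3}$ underlying \cref{demon} (with some verifications perhaps left to the appendix); it is also where the argument fails for type $E_6$.
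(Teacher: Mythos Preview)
Your argument has a genuine gap: the induction step for $N\ge6$ is not carried out. You correctly isolate the difficulty---after projecting onto the summands $V_1,V_2,V_3$ of $\fg\otimes\fg$ you no longer land in a smaller tensor power of $\fg$, and nothing in \cref{magic}, \cref{sqburst}, \cref{pentburst} lets you conclude that invariants factoring through $\fg^{\otimes(N-2)}\otimes V_i$ lie in $\mathcal S$. Saying you ``expect this to be the hard part'' is not a proof; as stated, the proposal establishes fullness only for $N\le5$. (Also, the first paragraph on factoring through $\Fcat_\delta$ is logically unnecessary for fullness and is in fact \cref{baja}, which the paper deduces \emph{from} fullness rather than the other way around.)

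The paper's proof takes a completely different, non-inductive route. It lets $\cC$ be the Karoubi envelope of the image of $\Phi$, observes that $\cC$ is a rigid symmetric $C^*$-tensor category with $\End_\cC(\one)=\C$, and invokes the Doplicher--Roberts reconstruction theorem to identify $\cC$ with the representation category of some compact group $H$. One inclusion $G\subseteq H$ is automatic; for the other, elements of $H$ preserve $\Phi(\mergemor)$ and $\Phi(\capmor)$, and by Steinberg the automorphism group of $(\fg,[\cdot,\cdot])$ is exactly $G$ when $\fg$ is of type $G_2,F_4,E_7,E_8$ (no outer automorphisms), forcing $H\subseteq G$. Thus $H=G$ and $\cC=\fg\md$, which is fullness. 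Your intuition that $E_6$ is the exception is right, but in this framework the reason is the outer automorphism of $\mathfrak e_6$, not a failure of some diagrammatic rewriting.
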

\begin{proof}
 Let $\cC$ be the Karoubi envelope  of the image $\Phi(\Tcat_{\delta})$. The category $\cC$ is a strict rigid  symmetric monoidal category with $\End_\cC(\one)=\C$. As a subcategory of the category of finite-dimensional complex vector spaces, $\cC$ is a $C^{*}$-category in the sense of \cite[\S~1]{DR89}. 
It follows, by \cite[Th.~6.1]{DR89}, that $\cC$ is the category of finite-dimensional continuous unitary
representations which separates the points (see \cite[p.~200]{DR89}) of some compact  group $H$. Since $\fg\md$ contains $\cC$, we have $G\subseteq H$. On the other hand, by \cite[4.8]{St61}, $G$ is the group of automorphisms of   $\fg$ that preserve $\Phi(\mergemor)$ and $\Phi(\capmor)$. As elements of $H$ preserve $\Phi(\mergemor)$ and $\Phi(\capmor)$, we must have $H\subseteq G$. Thus, $G=H$ and the proposition follows.
\end{proof}
\begin{rem}
	The proof of \cref{full} exploits the fact  that all automorphisms of $\fg$, for $\fg$ of types $G_2,F_4,E_7$ or $E_8$, are inner automorphisms (i.e. belong to the group generated by $\exp(\ad_x)$ for $x\in\fg$). In the case of $E_6$, not all automorphisms of $\mathfrak{e}_6$  are inner automorphisms, see \cite[4.8]{St61}, and so the compact group $E_6$ is strictly smaller than the group of automorphisms of it Lie algebra $\mathfrak{e}_6$.
\end{rem}
\begin{prop} \label{baja}
	The functor $\Phi$  factors through $\Fcat_{\delta}$.
\end{prop}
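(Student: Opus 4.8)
The plan is to set $\cI = \ker\Phi$, the tensor ideal of $\Tcat_\delta$ with $\cI(X,Y) = \{f : \Phi(f) = 0\}$, and to show that $\cI$ contains the three relations \cref{magic}, \cref{sqburst}, \cref{pentburst} that generate the defining ideal of $\Fcat_\delta$; the universal property of the quotient then produces the (unique) factorization $\Tcat_\delta \twoheadrightarrow \Fcat_\delta \to \fg\md$. Two preliminary remarks make the argument run. First, $\Tcat_\delta/\cI$ is nontrivial, since $\Phi(1_\go) = \id_\fg \neq 0$. Second, $\Phi$ induces a faithful (not necessarily full) embedding $\Tcat_\delta/\cI \hookrightarrow \fg\md$, so \cref{demon} gives the a priori bounds $\dim(\Tcat_\delta/\cI)(\go^{\otimes 2}, \go^{\otimes 2}) \leq 5$ and $\dim(\Tcat_\delta/\cI)(\go^{\otimes 2}, \go^{\otimes 3}) \leq 16$. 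Throughout, $\delta = \dim_\kk\fg \in \{14, 52, 78, 133, 248\}$, so in particular $\delta \notin \{-2, 0, 3, \tfrac{15 + 3\sqrt5}{4}, \tfrac{15 - 3\sqrt5}{4}\}$.

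First I would dispose of \cref{magic}. This relation is exactly the diagrammatic Jacobi identity \cref{Jacob}, and $\Phi(\mergemor)$ is the Lie bracket of $\fg$, which --- being skew-symmetric, $\fg$-equivariant, and satisfying Jacobi --- makes $\fg$ a Lie algebra object in the symmetric monoidal category $\fg\md$; since $\Phi$ preserves the cup, cap and crossing, the image of \cref{magic} is the Jacobi identity for this Lie algebra object, and therefore holds. (Alternatively, apply the formulas of \cref{magneto}, including \cref{Gsplit}, to both sides and use $[[a,b],c] = [a,[b,c]] - [b,[a,c]]$.) Hence \cref{magic} $\in \cI$, i.e. \cref{magic} holds in $\Tcat_\delta/\cI$.

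With \cref{magic} in hand I would bootstrap the two dimension counts. By \cref{SUP} (its hypotheses $\delta \notin \{-2,0,3\}$, nontriviality of $\Tcat_\delta/\cI$, and \cref{magic} all hold), the morphisms \cref{bigfive} are linearly independent in $(\Tcat_\delta/\cI)(\go^{\otimes 2},\go^{\otimes 2})$, so this hom-space has dimension at least $5$; combined with the upper bound above it has dimension exactly $5$, and \cref{sqexplode} then yields relation \cref{sqburst} in $\Tcat_\delta/\cI$, i.e. \cref{sqburst} $\in \cI$. Similarly, \cref{nugget} shows the sixteen elements \cref{particles} are linearly independent in $(\Tcat_\delta/\cI)(\go^{\otimes 2}, \go^{\otimes 3})$, so that hom-space has dimension at least $16$, hence exactly $16$; now every hypothesis of \cref{pentexplode} is in place (the two dimension equalities of \cref{dim16} and \cref{magic}), and it gives relation \cref{pentburst} in $\Tcat_\delta/\cI$, i.e. \cref{pentburst} $\in \cI$. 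Since $\cI$ then contains all three generating relations of $\Fcat_\delta$, the functor $\Phi$ factors uniquely through $\Fcat_\delta$, as required.

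I anticipate no serious obstacle: once the lemmas of \cref{sec4} are granted, the proof is pure assembly. The only points needing a moment of care are confirming that \cref{magic} really lies in $\ker\Phi$ (i.e. that $\Phi$ respects the Jacobi identity) and checking that $\delta = \dim\fg$ avoids every value excluded in \cref{SUP}, \cref{nugget} and \cref{pentexplode} --- which it does for each of the five exceptional types.
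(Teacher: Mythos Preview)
Your proof is correct and follows the same overall scheme as the paper's: set $\cI = \ker\Phi$, verify \cref{magic} directly from the Jacobi identity for the Lie bracket, establish the dimension equalities \cref{dim16} in $\Tcat_\delta/\cI$, and then invoke \cref{sqexplode} and \cref{pentexplode}. The one genuine difference is in how the dimension equalities are obtained. The paper appeals to the fullness of $\Phi$ (\cref{full}) to get isomorphisms $(\Tcat_\delta/\cI)(\go^{\otimes m},\go^{\otimes n}) \cong \Hom_\fg(\fg^{\otimes m},\fg^{\otimes n})$ and hence the exact equalities at once; you instead sandwich each dimension between the trivial upper bound coming from faithfulness of the induced functor and the lower bound supplied by the Gram-matrix lemmas \cref{SUP} and \cref{nugget}. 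Your route is slightly more elementary --- it avoids the Doplicher--Roberts reconstruction theorem underlying \cref{full} --- and has the incidental advantage of applying uniformly to all five exceptional types, whereas \cref{full} as stated excludes $E_6$.
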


\begin{proof}
Let $\cI$ be the kernel of the functor $\Phi$. This is a tensor ideal. By \cref{full} we obtain isomorphisms $(\Tcat_\delta/\cI)\left(\go^{\otimes n},\go^{\otimes m}\right)\overset{\simeq}{\to}\Hom_\fg\left(\fg^{\otimes n},\fg^{\otimes m}\right)$. Hence, the Jacobi identity \cref{magic} holds in $\Tcat_\delta/\cI$. Moreover, the dimensions \cref{demon} force the hom-spaces of $\Tcat_\delta/\cI$ to satisfy the dimension restrictions \cref{dim16}. By \cref{sqexplode,pentexplode}, this implies that \cref{sqburst,pentburst} hold in $\Tcat_\delta/\cI$ and, consequently, in $\fg\md$ as well.
\end{proof}

\begin{rem}
	Relations \cref{sqburst,pentburst,spinner} allow one to remove cycles of length up to five from morphisms in $\Tcat_{\delta}$. We do not know if we can remove cycles of length six or more. 
\end{rem} 
\appendix
\label{appendix}

\includepdf[pages=1,pagecommand={\phantomsection\addcontentsline{toc}{section}{Appendix. SageMath notebook}}]{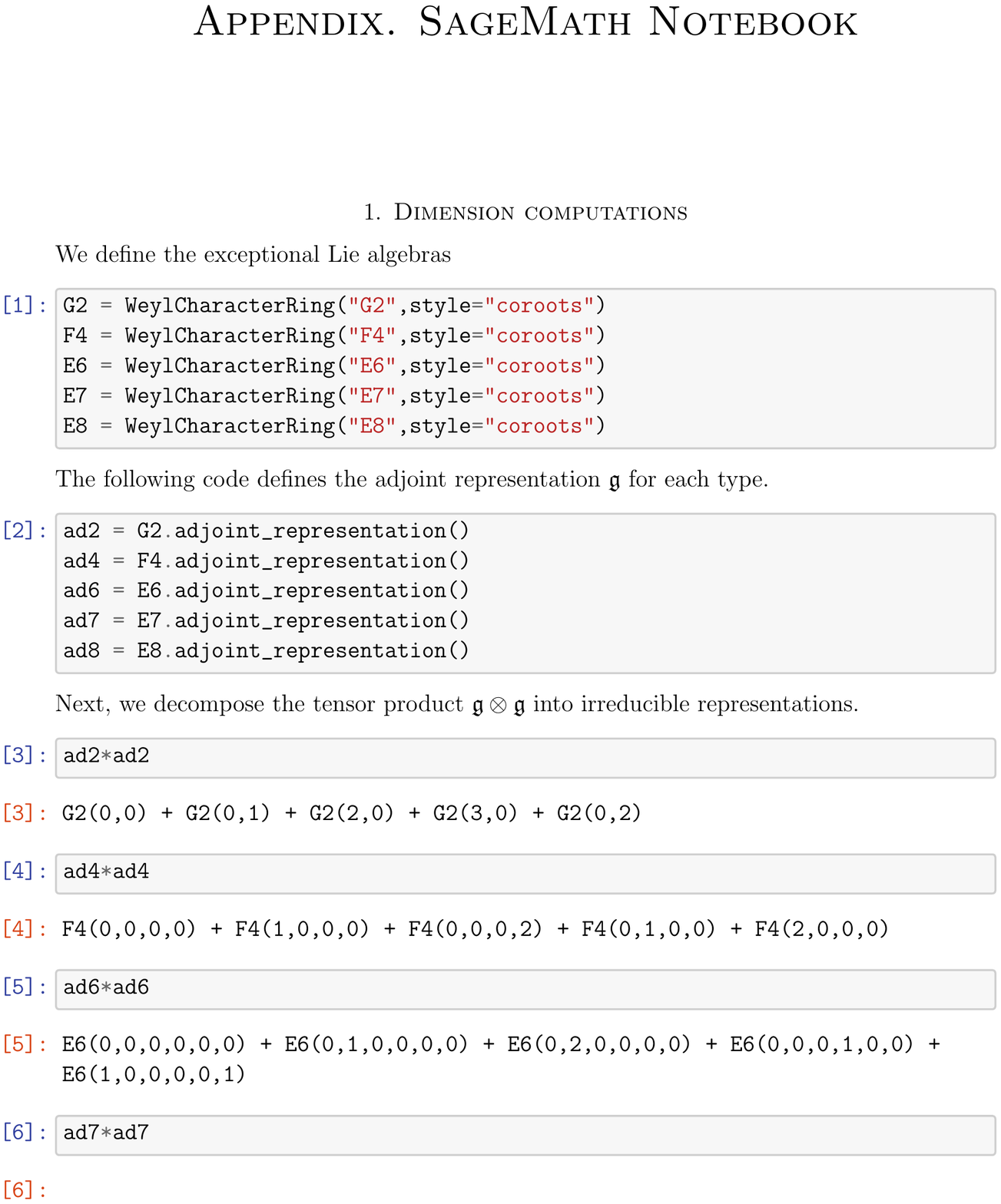}
\includepdf[pages=2-]{SageMath}


\bibliographystyle{alphaurl}
\bibliography{ELA}

\end{document}